\documentclass[11pt]{article}
\usepackage{declare-style-article}
\usepackage{declare-maths-article}
\usepackage{declare-text-article}
\usepackage{declare-theorems-article}
\hypersetup{
	pdfauthor 
		= {Nikita Nikolaev},
	pdftitle 
		= {Abelianisation of Logarithmic sl(2)-Connections},
	pdfsubject
		= {},
	pdfcreator
		= {},
	pdfproducer
		= {},
	pdfkeywords
		= {meromorphic connections, spectral curves, spectral networks, Stokes graph, exact WKB, abelianisation, Levelt filtrations, singular differential equations, Higgs bundles, local systems},
}

\usepackage{appendix}
\usepackage[font={footnotesize}]{caption}

\linespread{1.15}

\fancypagestyle{frontpage}{

\cfoot{}
\lfoot{\footnotesize
Appeared: 9 February 2019. Accepted: 30 June 2021.\\
To appear in \href{https://www.springer.com/journal/29}{\textsc{Selecta Mathematica}}\\
Contact: \href{mailto:n.nikolaev@sheffield.ac.uk}{n.nikolaev@sheffield.ac.uk}}
}

\DeclareSymbolFont{bbold}{U}{bbold}{m}{n}
\DeclareSymbolFontAlphabet{\mathbbold}{bbold}

\NewDocumentCommand{\evatlong}{m o g}{%
\IfNoValueTF{#2}
	{ { #1 }{\raisebox{-2pt}{$\big\rvert_{ #3 }$}}}
	{ \left. { #1 } \right|_{ #3 }^{ #2} }
}

\newcommand{\bbDelta}{\mathbbold{\Delta}}
\newcommand{\bbid}{\mathbbold{1}}

\renewcommand{\vec}[1]{{\smash{\overset{\smash{\text{\raisebox{-0.2em}{\protect\scalebox{0.75}{$\rightarrow$}}}}}{#1}}}}

\begin{document}


\newgeometry{left=3.5cm, right=3.5cm, top=2cm, bottom=3cm}

\title{Abelianisation of Logarithmic $\frak{sl}_2$-Connections}

\author{Nikita Nikolaev}

\affil{\small School of Mathematics and Statistics, University of Sheffield, United Kingdom
\\
Section of Mathematics, University of Geneva, Switzerland
}

\date{2 July 2021}

\maketitle
\thispagestyle{frontpage}

\begin{abstract}
We prove a functorial correspondence between a category of logarithmic $\frak{sl}_2$-connections on a curve $X$ with fixed generic residues and a category of abelian logarithmic connections on an appropriate spectral double cover $\pi : \sf{\Sigma} \to X$.
The proof is by constructing a pair of inverse functors $\pi^\ab, \pi_\ab$, and the key is the construction of a certain canonical cocycle valued in the automorphisms of the direct image functor $\pi_\ast$.
\end{abstract}

{\small
\textbf{Keywords:}
meromorphic connections,
spectral curves,
spectral networks,
Stokes graph,
exact WKB,
abelianisation,
Levelt filtrations,
singular differential equations,
Higgs bundles,
local systems

\textbf{2020 MSC:} 
\href{https://zbmath.org/classification/?q=cc%3A53C05}{53C05 (primary)},
\href{https://zbmath.org/classification/?q=cc%3A34M03}{34M03},
\href{https://zbmath.org/classification/?q=cc%3A34M35}{34M35},
\href{https://zbmath.org/classification/?q=cc%3A34M40}{34M40},
\href{https://zbmath.org/classification/?q=cc%3A34M45}{34M45}
}


{\small
\tableofcontents
}

\newpage
\restoregeometry

\section{Introduction}

This paper describes an approach to analysing meromorphic connections on Riemann surfaces.
The technique, called \textit{abelianisation}, is to introduce a decorated graph $\Gamma$ on a Riemann surface $X$ in order to establish a correspondence between meromorphic connections on vector bundles of higher rank over $X$ and meromorphic connections on line bundles (which we call \textit{abelian connections}) over a multi-sheeted ramified cover $\sf{\Sigma} \to X$.
Namely, given a flat vector bundle $\cal{E}$ on $X$, an application of the standard local theory of singular differential equations near each pole allows one to extract valuable asymptotic information in the form of locally defined flat filtrations on $\cal{E}$, first discovered by Levelt \cite{MR0145108}.
These filtrations, often called \textit{Levelt filtrations}, can be organised into a single flat line bundle $\cal{L}$ but only over $\sf{\Sigma}$, and $\cal{E}$ can be recovered from $\cal{L}$ using the combinatorial data encoded in $\Gamma$.

\paragraph{Main result.}
In this paper, we restrict our attention to the simplest case of $\frak{sl}_2$-connections with logarithmic singularities and generic residues.
Our main result (\Autoref{191115100309}) is a natural equivalence between a category of $\frak{sl}_2$-connections on $X$ and a category of logarithmic abelian connections on a double cover $\sf{\Sigma}$ of $X$.
More precisely, fix $(X,D)$ a compact smooth complex curve with a finite set of marked points, fix the data of generic residues along $D$, and choose an appropriate meromorphic quadratic differential $\phi$ on $X$ with double poles along $D$.
Then $\phi$ gives rise to a double cover $\pi : \sf{\Sigma} \to X$ (called the \textit{spectral curve}) ramified at $R \subset \sf{\Sigma}$, a graph $\Gamma$ on $X$ (called the \textit{Stokes graph}), and a transversality condition on the Levelt filtrations extracted at nearby poles as dictated by $\Gamma$.
Then there is a natural equivalence of categories:
\eqn{
\begin{tikzcd}[ampersand replacement = \&]
	\begin{Bmatrix}
			\text{$\frak{sl}_2$-connections on $X$}
		\\	\text{with logarithmic poles at $D$}
		\\	\text{with very generic residues}
		\\	\text{transverse with respect to $\Gamma$}
	\end{Bmatrix}
					\ar[r, phantom, "\sim" description]
					\ar[r, shift left, "\pi^\ab_\Gamma"]
		\&
					\ar[l, shift left, "\pi_\ab^\Gamma"]
	\begin{Bmatrix}
			\text{abelian connections on $\sf{\Sigma}$}
		\\	\text{with logarithmic poles}
		\\  \text{at $\pi^{-1} (D) \cup R$ with fixed residues}
		\\	\text{equipped with odd structure}
	\end{Bmatrix}
\end{tikzcd}
\fullstop
}
Given a flat vector bundle $\cal{E}$ on $X$, the \textit{abelianisation functor} $\pi^\ab_\Gamma$ extracts Levelt filtrations along $D$ and glues them into a flat line bundle $\cal{L}$ over $\sf{\Sigma}$.
In order to recover $\cal{E}$ from $\cal{L}$, the main difficulty is that the naive guess that $\cal{E}$ is the pushforward $\pi_\ast \cal{L}$ is incorrect because $\pi_\ast \cal{L}$ necessarily has logarithmic singularities along the branch locus.
The solution is to realise the combinatorial content of the Stokes graph $\Gamma$ in cohomology: we construct a canonical cocycle $\Voros$ on $X$ (called the \textit{Voros cocycle}) which deforms the pushforward functor $\pi_\ast$, as a functor, and this deformation is the \textit{nonabelianisation functor} $\pi_\ab^\Gamma$.
The Voros cocycle is constructed in a completely standardised and combinatorial way from the Stokes graph $\Gamma$.
This is significant because it means $\Voros$ is constructed without reference to any specific choice of $\cal{E}$ or $\cal{L}$, thereby setting up an equivalence of categories.

\paragraph{Context: spectral networks and exact WKB.}
Analysis of higher rank connections using abelian connections over a multi-sheeted cover has previously appeared in the context of spectral networks \cite{MR3115984,MR3003931,MR3147409,MR3500424,MR3613514}, and even earlier from a different point of view in the context of the exact WKB analysis; e.g., \cite{MR729194,MR1209700,MR2182990}.
The purpose of our work is to give a mathematical formulation of abelianisation of connections, and this paper is the first and important step in this direction.
Our point of view, via the deformation theory of the pushforward functor, sheds light on the mathematical content of the methods of spectral networks and the exact WKB analysis, unifying the insights coming from these theories.
Indeed, the local expressions for the Voros cocycle $\Voros$ involve precisely the same type of unipotent matrices that appear in the pioneering work of Voros on the exact WKB analysis \cite{MR729194} (we call $\Voros$ the \textit{Voros cocycle} exactly for this reason).
At the same time, the off-diagonal terms of $\Voros$ are given in terms of abelian parallel transports along canonically defined paths on the spectral curve.
These appeared in the work of Gaiotto--Moore--Neitzke \cite{MR3115984} which inspired the current project.
In fact, one of the main achievements of this paper is giving a clear mathematical explanation that the path-lifting rule appearing in \cite{MR3115984} emerges simply from the repeated application of the Voros cocycle.

\paragraph{Outlook.}
Abelianisation of connections can be seen as generalising the abelianisation of Higgs bundles \cite{MR887284,MR998478} (a.k.a. the spectral correspondence, which is a key step in the analysis of Hitchin integrable systems and the geometric Langlands programme) to flat bundles.
Indeed, \Autoref{191115181734} shows that the abelianisation line bundle $\cal{L}$ is the correct analogue of the spectral line bundle.
It was also conjectured in the work of Gaiotto--Moore--Neitzke \cite{MR3115984} that such a procedure of abelianisation of connections should yield symplectic cluster coordinates on moduli spaces of meromorphic connections.
This article (which is an extension of the work the author completed in his thesis \cite{MR3809826}) is thus the first important step in realising this programme in mathematical terms.

\paragraph{Content.}
The article is dedicated to the proof of \Autoref{191115100309}, which proceeds by constructing the functors $\pi^\ab_\Gamma, \pi_\ab^\Gamma$ and showing that they form an inverse equivalence.
\Autoref{181126145627} and \Autoref{191115181734} give a summary of the main properties of the relationship between $(\cal{E}, \nabla)$ and its abelianisation $(\cal{L}, \de)$.
We also make the curious observation that the nonabelian Voros cocycle may itself be abelianised: there is an abelian cocycle $\bbDelta$ on the spectral curve $\sf{\Sigma}$ which completely determines the Voros cocycle $\Voros$ in the sense of \Autoref{181102200623}.

\paragraph*{Acknowledgements.}
The author wishes to thank Francis Bischoff, Aaron Fenyes, Alberto Garc\'ia-Raboso, Kohei Iwaki, Omar Kidwai, Andrew Neitzke, Steven Rayan, and Shinji Sasaki for helpful and enlightening discussions.
Many thanks also go to the anonymous referee for the very thoughtful and helpful comments.
The author expresses special gratitude to Marco Gualtieri, who suggested the problem and provided so much invaluable input, support, and encouragement as the author's thesis advisor.
At various stages, this work was supported by the Ontario Graduate Scholarship and by the NCCR SwissMAP of the SNSF.

\section{Logarithmic Connections and Spectral Curves}

Throughout this paper, let $X$ be a compact smooth complex curve and $D \subset X$ a finite set of marked points.
We assume that $D$ is nonempty with $|D| > \chi (X) = 2 - 2g_X$, where $g_X$ is the genus of $X$.
The Lie algebra $\frak{sl} (2, \Complex)$ is denoted by $\frak{sl}_2$.

\subsection{Logarithmic Connections and Levelt Filtrations}
\label{181118210908}

\paragraph{}
A \dfn{logarithmic $\frak{sl}_2$-connection} on $(X, D)$ is the data $(\cal{E}, \nabla, \MM)$ of a holomorphic rank-two vector bundle $\cal{E}$ on $X$, a $\underline{\Complex}_X$-linear map of sheaves
\eqntag{
	\nabla : \cal{E} \too \cal{E} \otimes \Omega^1_X (D)
}
satisfying the Leibniz rule $\nabla (fe) = e \otimes \dd{f} + f \nabla (e)$ for all $e \in \cal{E}, f \in \cal{O}_X$, and a trivialisation $\MM: \det (\cal{E}) \iso \cal{O}_X$ such that $\MM (\tr \nabla) \MM^{-1} = \dd$.
They form a category, which we denote by $\Conn^2_{\frak{sl}} (X,D)$.
We will often omit ``$\MM$'' from the notation.

\paragraph{Generic Levelt exponents and residue data.}
The residue sequence for $\Omega^1_X (D)$ implies that the restriction of $\nabla$ to $D$ is a well-defined $\cal{O}_D$-linear endomorphism $\Res \nabla \coleq \evat{\nabla}{D} \in H^0_X \big(\cal{End} (\evat{\cal{E}}{D}) \big)$, called the \dfn{residue} of $\nabla$ along $D$.
A further restriction of $\Res \nabla$ to any point $\pp \in D$ is an endomorphism of the fibre $\Res_\pp \nabla \in \End (\evat{\cal{E}}{\pp})$ whose eigenvalues $\pm \lambda_\pp \in \Complex$ are called the \dfn{Levelt exponents} of $\nabla$ at $\pp$.
The determinant map $\det : \cal{End} (\evat{\cal{E}}{D}) \to \cal{O}_D$ sends $\Res \nabla$ to a global section of $\cal{O}_D$:
\eqntag{
	a 
		\coleq - \det \Res \nabla
		= \set{a_\pp \coleq - \det (\Res_\pp \nabla) = \lambda_\pp^2 \in \Complex ~\big|~ \pp \in D}
		\in H^0_X (\cal{O}_D)
\fullstop
}

\begin{defn}[generic residue data]{181118161351}
The Levelt exponents $\pm \lambda_\pp$ at $\pp$ are \dfn{generic} if $\Re (\lambda_\pp) \neq 0$ and $\lambda_\pp \notin \tfrac{1}{2} \Integer$.
We will refer to any section $a \in H^0_X (\cal{O}_D)$ as \dfn{residue data}, and say it is \dfn{generic} if for each $\pp \in D$, the two square roots $\pm \lambda_\pp$ of $a_\pp$ define generic Levelt exponents.
\end{defn}

\paragraph{}
Thus, $a$ is generic if and only if each complex number $a_\pp$ is \textit{not} purely negative real or a quarter square $n^2/4$ for some $n \in \Integer$.
We will always order the generic Levelt exponents by their increasing real part: $- \lambda_\pp \prec \lambda_\pp$ if and only if $\Re (\lambda_\pp) > 0$.
The assumption that $\Re (\lambda_\pp) \neq 0$ is necessary for the construction in this paper because we will use the ordering $\prec$, but the assumption that $\lambda_\pp \notin \tfrac{1}{2} \Integer$ (usually called \dfn{non-resonance}) can be removed without a great deal of difficulty; in this paper, however, we restrict ourselves to this simplest situation and generalisations will appear elsewhere.

\begin{example}{210610135430}
Perhaps the most familiar explicit example is the following.
Take $X \coleq \PPP^1$, fix $d \geq 3$ distinct points $D \coleq \set{u_1, \ldots, u_{d}} \subset \PPP^1$, and $d$ constant matrices $\AA_1, \ldots, \AA_d \in \frak{sl}_2$ with $\AA_1 + \ldots + \AA_d = 0$.
We usually choose an affine coordinate $z$ on $\PPP^1 \eqcol \PPP^1_z$ such that $u_d$ is the point at infinity.
Then the trivial rank-two vector bundle $\cal{E} = \cal{O}_{\PPP^1} \oplus \cal{O}_{\PPP^1}$ is equipped with a logarithmic connection $\nabla$ defined with respect to the standard basis for $\cal{E}$ by the following formula in the affine coordinate charts $z$ and $w = z^{-1}$:
\eqntag{\label{210610202346}
	\nabla \coleq \dd + \sum_{i=1}^{d-1} \frac{\AA_i}{z-u_i} \dd{z}
\qtext{and}
	\nabla = \dd - \sum_{i=1}^{d-1} \frac{\AA_i}{1-u_i w} \frac{\dd{w}}{w}
\fullstop
}
Evidently, $\nabla$ has logarithmic singularities at each point $u_i$ with residue ${\Res_{u_i} \nabla = \AA_i}$.
The residue $\Res \nabla$ along $D$ is then simply the full collection of the chosen matrices $\set{\AA_1, \ldots, \AA_d}$.
The eigenvalues $\pm \lambda_i \in \Complex$ of each $\AA_i$ are the Levelt exponents of $\nabla$, so the residue data of $\nabla$ is $a = \set{\lambda_1^2, \ldots, \lambda_d^2}$.
\end{example}

\paragraph{}
The central object of study in this paper is the category of logarithmic $\frak{sl}_2$-connections on $(X,D)$ with fixed generic residue data $a$, for which we shall use the following shorthand notation:
\eqntag{
	\Conn_X^2 \coleq \Conn^2_{\frak{sl}} (X,D; a) \subset \Conn^2_{\frak{sl}} (X,D)
\fullstop
}

\paragraph{Local diagonal decomposition.}
Fix a point $\pp \in D$, and consider a connection germ $(\cal{E}_\pp, \nabla_\pp)$ at $\pp$ with generic Levelt exponents $\pm \lambda_\pp$ at $\pp$, where $\Re (\lambda) > 0$.
A coordinate trivialisation $\cal{E}_\pp \iso \Complex \set{z}^2$ transforms $\nabla_\pp$ to a logarithmic $\frak{sl}_2$-differential system $\dd + \AA (z) z^{-1} \dd{z}$, where $\AA (z)$ is some $\frak{sl}_2$-matrix of holomorphic function germs.
By \cite[Theorems 5.1, 5.4]{MR0460820}, there exists a holomorphic $\SL_2$ gauge transformation which transforms the given differential system into the diagonal system $\dd + \diag (- \lambda_\pp, + \lambda_\pp) z^{-1} \dd{z}$ which depends only on $\lambda_\pp$ and $z$.
This classical theorem about singular ordinary differential equations admits vast generalisations, but we do not need them here.
Together with the fixed ordering on the Levelt exponents, it induces a graded decomposition of $\cal{E}_\pp$ with respect to which $\nabla_\pp$ is diagonal.

\begin{prop}[Local diagonal decomposition]{181114180038}
Let $(\cal{E}_\pp, \nabla_\pp, \MM_\pp)$ be the germ of a logarithmic $\frak{sl}_2$-connection at $\pp \in D$ with generic Levelt exponents $\pm \lambda_\pp$.
Then there is a canonical ordered decomposition
\eqntag{
	\cal{E}_\pp \iso \Lambda_\pp^- \oplus \Lambda_\pp^+
\qtext{with}
	\nabla_\pp \simeq \de_\pp^- \oplus \de_\pp^+
\fullstop{,}
}
where $(\Lambda_\pp^\pm, \de_\pp^\pm)$ is a rank-one logarithmic connection germ at $\pp$ with residue $\pm \lambda_\pp$.
Moreover, $\MM$ induces a flat skew-symmetric isomorphism $\MM_\pp : \Lambda_\pp^- \otimes \Lambda_\pp^+ \iso \cal{O}_{X,\pp}$.
\end{prop}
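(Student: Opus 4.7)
My plan is to extract the decomposition from the Levelt normal form cited immediately above the statement, and then to establish canonicity using the non-resonance condition built into the generic residue data. For existence, pick a local coordinate $z$ at $\pp$ and a holomorphic trivialisation $\cal{E}_\pp \iso \Complex\set{z}^2$, putting $\nabla_\pp$ into the form $\dd + A(z) z^{-1} \dd{z}$ with $A \in \frak{sl}_2(\Complex\set{z})$ and $A(0)$ having eigenvalues $\pm\lambda_\pp$. The cited theorem produces a holomorphic $\SL_2$-gauge taking the system to the Levelt normal form $\dd + \diag(-\lambda_\pp, +\lambda_\pp) z^{-1} \dd{z}$; the preimages under this gauge of the two coordinate line-subbundles of $\Complex\set{z}^2$ then define $\nabla_\pp$-invariant sub-line bundles $\Lambda^-_\pp, \Lambda^+_\pp \subset \cal{E}_\pp$ with $\cal{E}_\pp = \Lambda^-_\pp \oplus \Lambda^+_\pp$, on which the restricted connections $\de^\pm_\pp \coleq \nabla_\pp|_{\Lambda^\pm_\pp}$ are rank-one logarithmic connections of residue $\pm\lambda_\pp$.

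The substantive step is canonicity: the sub-line bundles $\Lambda^\pm_\pp$ must be independent of the above choices. Since the eigenvalues $\pm\lambda_\pp$ of $\Res_\pp \nabla$ are distinct, the fibre splits canonically as $\cal{E}|_\pp = V^-_\pp \oplus V^+_\pp$ into eigenlines, and I would characterise $\Lambda^\pm_\pp$ intrinsically as the unique $\nabla_\pp$-invariant sub-line bundle of $\cal{E}_\pp$ with fibre $V^\pm_\pp$ at $\pp$. Uniqueness reduces to showing that if $L_1, L_2 \subset \cal{E}_\pp$ are two such invariant line subbundles with common fibre $V^+_\pp$, then $L_1 = L_2$. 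Otherwise the composition $L_1 \hookrightarrow \cal{E}_\pp \twoheadrightarrow \cal{E}_\pp / L_2$ is a nonzero flat morphism between rank-one logarithmic connections of residues $+\lambda_\pp$ and $-\lambda_\pp$; in local flat frames it is multiplication by a holomorphic $f(z)$ satisfying $\dd{f} = 2\lambda_\pp f z^{-1} \dd{z}$, whose solutions are scalar multiples of $z^{2\lambda_\pp}$. Genericity gives $2\lambda_\pp \notin \Integer$, so the only holomorphic germ of this form is $f \equiv 0$, a contradiction. The prescribed ordering $-\lambda_\pp \prec +\lambda_\pp$ then labels the two summands unambiguously.

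For compatibility with $\MM_\pp$, the decomposition induces a natural isomorphism $\det \cal{E}_\pp \iso \Lambda^-_\pp \otimes \Lambda^+_\pp$, tautologically skew-symmetric in its factors; composing with $\MM_\pp : \det \cal{E}_\pp \iso \cal{O}_{X,\pp}$ gives the required $\MM_\pp : \Lambda^-_\pp \otimes \Lambda^+_\pp \iso \cal{O}_{X,\pp}$. Flatness is automatic: under the decomposition, $\tr \nabla_\pp$ corresponds to $\de^-_\pp \otimes \bbid + \bbid \otimes \de^+_\pp$, of residue $(-\lambda_\pp) + (+\lambda_\pp) = 0$, which by $\MM(\tr \nabla) \MM^{-1} = \dd$ is intertwined with the trivial connection $\dd$ on $\cal{O}_{X,\pp}$. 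The main obstacle in the whole argument is the canonicity step: the non-resonance hypothesis $\lambda_\pp \notin \tfrac{1}{2}\Integer$ is the essential input, as it is precisely what kills the off-diagonal flat morphism and forces the decomposition to be uniquely determined by $(\cal{E}_\pp, \nabla_\pp)$ alone.
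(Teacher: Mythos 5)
Your proposal is correct and follows essentially the same route as the paper: existence of the decomposition is extracted from the cited Levelt normal form (the holomorphic $\SL_2$ gauge to $\dd + \diag(-\lambda_\pp,+\lambda_\pp)z^{-1}\dd{z}$), and the statement about $\MM_\pp$ is the determinant identification $\det\cal{E}_\pp \iso \Lambda_\pp^-\otimes\Lambda_\pp^+$. The paper leaves canonicity implicit in the normal-form discussion; your explicit uniqueness step --- that non-resonance $2\lambda_\pp\notin\Integer$ forces any flat morphism between the residue-$(+\lambda_\pp)$ and residue-$(-\lambda_\pp)$ pieces to vanish --- is precisely the computation the paper tacitly relies on there and again when asserting that morphisms preserve the Levelt filtrations.
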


Here, ``skew-symmetric'' means that $\MM_\pp$ is multiplied by $-1$ under the switching map.
The order on the Levelt exponents $- \lambda_\pp \prec + \lambda_\pp$ determines a $\nabla_\pp$-invariant filtration $\cal{E}_\pp^\bullet \coleq \big( \Lambda^-_\pp \subset \cal{E}_\pp \big)$ on the vector bundle germ $\cal{E}_\pp$, which we will refer to as the \dfn{Levelt filtration} in reference to the more general such concept studied by Levelt in his thesis \cite{MR0145108}.

We will refer to the $\nabla_\pp$-invariant filtration $\cal{E}_\pp^\bullet \coleq \big( \Lambda^-_\pp \subset \cal{E}_\pp \big)$, given by the order on the Levelt exponents $- \lambda_\pp \prec + \lambda_\pp$, as the \dfn{Levelt filtration} on the vector bundle germ $\cal{E}_\pp$.
Clearly, any pair of logarithmic $\frak{sl}_2$-connection germs $(\cal{E}_\pp, \nabla_\pp), (\cal{E}'_\pp, \nabla'_\pp)$ with the same generic Levelt exponents $\pm \lambda_\pp$ at $\pp$ are isomorphic and any such isomorphism is necessarily diagonal with respect to the diagonal decompositions.
Any morphism $(\cal{E}_\pp, \nabla_\pp) \to (\cal{E}'_\pp, \nabla'_\pp)$ necessarily preserves the Levelt filtration, as the following lemma explains.

\begin{lem}{181115123246}
Suppose $(\cal{E}, \nabla), (\cal{E}', \nabla')$ is a pair of $\frak{sl}_2$-connections on $(X,D)$ with the same local exponents along $D$.
Then any morphism $\varphi : \cal{E} \to \cal{E}'$ of connections restricts to a map $\varphi_\pp : \cal{L}_\pp \to \cal{L}'_\pp$ between the Levelt subbundles near $\pp$ for every $\pp \in D$.
\end{lem}

\newpage

\begin{proof}
Let $\psi, \psi'$ be (possibly multivalued) flat sections of $\cal{L}_\pp, \cal{L}'_\pp$, respectively, and let $\psi''$ be a (possibly multivalued) flat section of $\cal{E}'$ linearly independent from $\psi'$, all defined over a punctured neighbourhood of $\pp$.
Thus, $(\psi', \psi'')$ is an ordered basis of (multivalued) flat sections of $\cal{E}'$ over a punctured neighbourhood of $\pp$.
The sections $\psi, \psi'$ decay to $0$ as $x \to 0$ for any local coordinate $x$ centred at $\pp$, whilst $\psi''$ is unbounded.
Being a morphism of connections, the map $\varphi$ necessarily sends $\psi$ to a constant linear combination of $a\psi' + b\psi''$.
But since $\varphi$ is holomorphic at $\pp$ and $\psi$ decays at $\pp$, the section $\varphi (\psi)$ must also decay to $0$ as $x \to 0$.
This forces $b = 0$.
\end{proof}

Note: one may alternatively choose a sectorial neighbourhood of $\pp$ to dismiss the question of multivaluedness.
In this case, we need to use that fact that $\varphi$ is bounded as $x \to 0$.

\begin{example}{210610161750}
Continuing \autoref{210610135430}, assume that $\nabla$ has generic residue data, and restrict our attention to the disc germ of, say, the singularity $u_1$.
There is an $\SL_2$ matrix $\GG = \GG (z)$, holomorphic at $z = u_1$, such that
\eqntag{
	\GG \nabla \GG^{-1} = \dd + \mtx{-\lambda_1 & \\ & + \lambda_1} \frac{\dd{z}}{z - u_1}
\fullstop
}
Then the line subbundles $\Lambda_{1}^-, \Lambda_1^+$ are generated by $e_- \coleq \GG^{-1} {\tiny\mtx{1\\0}}$ and $e_+ \coleq \GG^{-1} {\tiny\mtx{0\\1}}$.
\end{example}

\subsection{Logarithmic Connections and Double Covers}

Logarithmic connections can be pulled back and pushed forward along ramified covers.
In this section we describe these operations, restricting ourselves to the simplest case of double covers $\pi : \sf{\Sigma} \to X$ with simple ramification and which are trivial over the polar divisor $D$.
Thus, let $C \coleq \pi^{-1} (D)$ and let $R \subset \sf{\Sigma}$ be the ramification divisor.
Here and everywhere, we assume that $R$ has no higher multiplicity and that the branch locus $B \coleq \pi (R) \subset X$ is disjoint from $D$.
We denote by $\sigma : \sf{\Sigma} \to \sf{\Sigma}$ the canonical involution.

\paragraph{Odd abelian connections.}
Connections on line bundles are sometimes called \dfn{abelian connections}.
The line bundle $\cal{O}_\sf{\Sigma} (R)$ carries a canonical logarithmic connection $\de_{R}$, defined to be the connection for which the canonical map $\cal{O}_\sf{\Sigma} \to \cal{O}_\sf{\Sigma} (R)$ is flat.
Explicitly, if $z$ is a local coordinate on $\sf{\Sigma}$ vanishing at $\rr \in R$, then the local section $z^{-1} \in \cal{O}_\sf{\Sigma} (R)$ gives a trivialisation, in which $\de_{R}$ is given by
\eqntag{
	\de_{R} (z^{-1}) = \dd{(z^{-1})} = -z^{-1} \dd{z} \otimes z^{-1}
\fullstop{,}
	\qqtext{i.e.,}
	\de_{R} = \dd - z^{-1} \dd{z}
\fullstop
}

\begin{defn}[odd abelian connection]{180511123511}
An \dfn{odd abelian logarithmic connection} on $(\sf{\Sigma}, R \cup C)$ is the data $(\cal{L}, \de, \mu)$ consisting of an abelian logarithmic connection on $(\sf{\Sigma}, R \cup C)$ equipped with a skew-symmetric isomorphism
$
	\mu : \cal{L} \otimes \sigma^\ast \cal{L} \iso \cal{O}_\sf{\Sigma} (R)
$
intertwining $\de \otimes \sigma^\ast \de$ and $\de_{R}$.
\end{defn}

Here, ``skew-symmetric'' means $\mu$ satisfies $\sigma^\ast \mu = - \mu$.
Abelian connections with a similar structure but over the \textit{punctured} curve $\sf{\Sigma} \setminus C \cup R$ have appeared in \cite[\S4.2]{MR3500424} under the name \textit{equivariant connections}.
We refer to the isomorphism $\mu$ as the \dfn{odd structure} on $(\cal{L}, \de)$.

Odd abelian connections form a category 
\eqntag{
	\Conn^1_{\sf{\Sigma}} = \Conn^1_\text{odd} (\sf{\Sigma}, R \cup C)
}
where morphisms are morphisms of connections $\varphi : \cal{L} \to \cal{L}'$ that intertwine the odd structures $\mu, \mu'$ in the sense that $\mu' \circ (\varphi \otimes \sigma^\ast \varphi) = \mu$.
It is easy to check that if $\mu_1, \mu_2$ are any two odd structures on the same abelian connection $(\cal{L}, \de)$, then $(\cal{L}, \de, \mu_1) \cong (\cal{L}, \de, \mu_2)$, and there are exactly two such isomorphisms.

\begin{prop}[residues of odd connections]{181116155648}
The residue of any odd abelian connection $(\cal{L}, \de, \mu)$ at a ramification point is $-1/2$.
In particular, the monodromy of $\de$ around a ramification point is $-1$.
Furthermore, if $\pp \in D$ and $\pp_\pm \in C$ are the two preimages of $\pp$, then the residues of $\de$ at $\pp_\pm$ satisfy
\eqntag{
	\Res_{\pp_-} \de + \Res_{\pp_+} \de = 0
\fullstop
}
\end{prop}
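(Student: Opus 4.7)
The plan is to deduce both statements from the fact that the odd structure $\mu$ is a flat isomorphism of logarithmic connections; consequently it must preserve residues pointwise on $R \cup C$, so both claims reduce to comparing against the known residues of $\de_R$.

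The first step I would carry out is a general residue transformation law for $\sigma^\ast \de$. Choose a point $x \in \sf{\Sigma}$, a local coordinate $w$ centred at $x$, and a local trivialisation $s$ of $\cal{L}$ near $x$ such that $\de s = r \cdot w^{-1} \dd{w} \otimes s + (\text{holomorphic})$, so that $\Res_x \de = r$. Then $\sigma^\ast s$ trivialises $\sigma^\ast \cal{L}$ near $\sigma^{-1}(x)$, and
\eqn{
	\sigma^\ast \de (\sigma^\ast s) = r \cdot (\sigma^\ast w)^{-1} \dd{(\sigma^\ast w)} \otimes \sigma^\ast s + (\text{holomorphic}),
}
yielding $\Res_{\sigma^{-1}(x)} \sigma^\ast \de = \Res_x \de$.

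I would then handle the ramification locus. Fix $\rr \in R$ and a coordinate $z$ centred at $\rr$ with $\sigma^\ast z = -z$; since $\rr$ is $\sigma$-fixed, the transformation law above gives $\Res_\rr \sigma^\ast \de = \Res_\rr \de$, so the tensor connection $\de \otimes \sigma^\ast \de$ has residue $2 \Res_\rr \de$ at $\rr$. On the other hand, the formula $\de_{R} = \dd - z^{-1} \dd{z}$ displayed in the text exhibits $\Res_\rr \de_{R} = -1$. Flatness of $\mu$ equates the two residues, giving $\Res_\rr \de = -1/2$ and local monodromy $\exp(-\pi i) = -1$.

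For the polar points, let $\pp \in D$ with preimages $\pp_\pm \in C$. Since $D \cap B = \varnothing$, the points $\pp_\pm$ are unramified and $\sigma$ exchanges them; the residue law above gives $\Res_{\pp_+} \sigma^\ast \de = \Res_{\pp_-} \de$, so the residue of $\de \otimes \sigma^\ast \de$ at $\pp_+$ equals $\Res_{\pp_+} \de + \Res_{\pp_-} \de$. Because $\pp_+ \notin R$, the connection $\de_{R}$ is regular at $\pp_+$, and $\mu$-flatness forces this sum to vanish.

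The argument is essentially formal, so I do not anticipate a real obstacle. The only point that merits care is verifying that at a ramification point the logarithmic form $\dd{z}/z$ is $\sigma$-invariant (since $\sigma^\ast z = -z$ makes the two sign changes cancel), which is exactly what makes the residues of $\de$ and $\sigma^\ast \de$ add at $\rr$ rather than cancel. Notably the skew-symmetry hypothesis $\sigma^\ast \mu = -\mu$ plays no role here, consistent with that condition constraining $\mu$ at the level of fibres rather than its residue behaviour.
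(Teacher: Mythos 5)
Your proposal is correct and follows essentially the same route as the paper's proof: compare the residue of $\de \otimes \sigma^\ast \de$ with that of $\de_{R}$ at ramification points and at the preimages of $D$, using that $\sigma$ fixes $R$ and swaps $\pp_\pm$. The extra care you take in verifying the residue transformation law under $\sigma^\ast$ (and noting that skew-symmetry of $\mu$ is not needed) is a harmless elaboration of steps the paper treats as immediate.
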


\begin{proof}
The residue of $\de_{R}$ at $\rr \in R$ is $-1$.
If $\lambda = \Res_\rr \de$, then the residue of the connection $\de \otimes \sigma^\ast \de$ at $\rr$ is $2 \lambda$, so the odd structure on $\cal{L}$ forces $\lambda = -1/2$.
Next, since $\sigma (\pp_-) = \pp_+$, the residue at $\pp_-$ of $\sigma^\ast \de$ is equal to the residue of $\de$ at $\pp_+$.
This means $\de \otimes \sigma^\ast \de$ has residue $\Res_{\pp_-} \de + \Res_{\pp_+} \de$ at $\pp_-$.
But the residue of $\de_{R}$ at $\pp_-$ is $0$, so the odd structure on $\cal{L}$ forces the identity.
\end{proof}

By using the residue theorem for connections \cite[Cor. (B.3), p. 186]{MR853449}, it is easy to compute the degree of a line bundle carrying an odd connection.

\begin{prop}{181109210007}
If $(\cal{L}, \de, \mu) \in \Conn^1_\textup{odd} (\sf{\Sigma}, R \cup C)$, then 
\eqntag{
	\deg (\cal{L}) = \tfrac{1}{2} |R| = - \deg (\pi_\ast \cal{O}_\sf{\Sigma})
\fullstop
}
\end{prop}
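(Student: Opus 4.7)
The plan is to combine the residue theorem for logarithmic connections on a compact Riemann surface with the residue data computed in the previous proposition, and then to establish the second equality by a purely topological calculation involving Riemann--Roch and Riemann--Hurwitz.

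For the first equality $\deg(\cal{L}) = \tfrac{1}{2}|R|$, I would apply the residue theorem \cite[Cor.~(B.3)]{1986InMat..86..161E}, which says
\eqn{
	\deg (\cal{L}) = - \sum_{q \in R \cup C} \Res_q \de
\fullstop
}
By \Autoref{181116155648}, the ramification contribution is $\sum_{\rr \in R} \Res_\rr \de = -\tfrac{1}{2}|R|$, and the contribution over the divisor $C = \pi^{-1}(D)$ breaks up into pairs $\pp_\pm$ lying over the points $\pp \in D$, with each such pair contributing $\Res_{\pp_-} \de + \Res_{\pp_+} \de = 0$. Hence only the ramification terms survive, giving $\deg(\cal{L}) = \tfrac{1}{2}|R|$.

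For the second equality $\tfrac{1}{2}|R| = -\deg(\pi_\ast \cal{O}_\sf{\Sigma})$, I would compute the Euler characteristic of $\pi_\ast \cal{O}_\sf{\Sigma}$ in two ways. Since $\pi$ is finite, $\chi(\sf{\Sigma}, \cal{O}_\sf{\Sigma}) = \chi(X, \pi_\ast \cal{O}_\sf{\Sigma})$. Using Riemann--Roch on the rank-two coherent sheaf $\pi_\ast \cal{O}_\sf{\Sigma}$ on $X$,
\eqn{
	\chi (X, \pi_\ast \cal{O}_\sf{\Sigma}) = \deg (\pi_\ast \cal{O}_\sf{\Sigma}) + 2 (1 - g_X)
\fullstop{,}
}
while $\chi(\sf{\Sigma}, \cal{O}_\sf{\Sigma}) = 1 - g_\sf{\Sigma}$. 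Combining these and applying Riemann--Hurwitz (which under our assumption of simple ramification along $R$ gives $2 g_\sf{\Sigma} - 2 = 2(2g_X - 2) + |R|$, i.e.\ $g_\sf{\Sigma} = 2 g_X - 1 + \tfrac{1}{2}|R|$) yields $\deg (\pi_\ast \cal{O}_\sf{\Sigma}) = -\tfrac{1}{2}|R|$, as desired.

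There is no real obstacle here: both equalities are routine. The only place to be mildly careful is verifying that the residue theorem applies to the divisor $R \cup C$ as the full polar locus of $\de$ (no stray poles, no ramification contribution to $C$ since the branch locus $B$ is disjoint from $D$), and in invoking Riemann--Hurwitz under the hypothesis that $R$ has no higher multiplicity, both of which are part of the standing assumptions of this section.
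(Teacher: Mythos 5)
Your proposal is correct and follows exactly the route the paper intends: the paper gives no written proof beyond the remark that the result follows from the residue theorem of Esnault--Viehweg combined with \Autoref{181116155648}, which is precisely your first computation, and your Riemann--Roch/Riemann--Hurwitz verification of $\deg(\pi_\ast\cal{O}_{\sf{\Sigma}})=-\tfrac{1}{2}|R|$ is the standard argument (equivalently, $\pi_\ast\cal{O}_{\sf{\Sigma}}\cong\cal{O}_X\oplus L^{-1}$ with $L^2\cong\cal{O}_X(B)$). Both signs and the cancellation of the residues over $C$ check out, so there is nothing to add.
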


\paragraph{Pullback and pushforward of connections.}
The pullback of $\cal{O}_X$-modules along $\pi$ extends to a pullback functor on connections 
\eqntag{
	\pi^\ast : \Conn^2_{\frak{sl}} (X,D) \to \Conn^2_{\frak{sl}} (\sf{\Sigma}, C)
}
by the rule $\pi^\ast \nabla (\pi^\ast e) = \pi^\ast (\nabla e)$ for any local section $e \in \cal{E}$.
Clearly, the Levelt exponents of $\nabla$ at $\pp \in D$ and the Levelt exponents of $\pi^\ast \nabla$ at any preimage $\smalltilde{\pp} \in C$ of $\pp$ are the same.
More interesting is pushing connections forward along $\pi$.
The direct image functor $\pi_\ast$ of $\cal{O}_\sf{\Sigma}$-modules can be used to pushforward connections from $\sf{\Sigma}$ down to $X$, but the relationship between the polar divisors is more complicated (see \cite[proposition 2.17]{MR3808258} for more generality).

\begin{prop}[pushforward of odd abelian connections]{181116135914}
The direct image $\pi_\ast$ extends to a functor
\eqntag{\label{181126184346}
	\pi_\ast : \Conn^1_\textup{odd} (\sf{\Sigma}, R \cup C) \too \Conn^2_{\frak{sl}} (X, B \cup D)
\fullstop
}
Moreover, for any $\de \in \Conn^1_\textup{odd} (\sf{\Sigma}, R \cup C)$, 
if $\pm \lambda \in \Complex$ are its residues at the two preimages $\pp_\pm \in C$ of a point $\pp \in D$, then the Levelt exponents of $\pi_\ast \de$ at $\pp$ are $\pm \lambda$.
\end{prop}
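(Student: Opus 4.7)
The plan is to construct $\pi_\ast$ in two stages — first pushing the connection forward as an $\cal{O}_X$-module connection, then extracting the $\frak{sl}_2$-trivialisation from the odd structure — and finally to verify the residue calculation at $\pp \in D$. Since $\pi$ is finite, $\pi_\ast \cal{L}$ is a rank-two coherent $\cal{O}_X$-module. The log pole divisor behaves well under $\pi$: the standard isomorphism $\pi^\ast \Omega^1_X (B) \iso \Omega^1_\sf{\Sigma} (R)$, witnessed at a branch point in the coordinate $w = z^2$ by $\pi^\ast (\dd w / w) = 2 \, \dd z / z$, combines with the étale identification $\pi^\ast D = C$ to give $\pi^\ast \Omega^1_X (B \cup D) \iso \Omega^1_\sf{\Sigma} (R \cup C)$. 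Pushing $\de$ forward and applying the projection formula then yields a $\underline{\Complex}_X$-linear map $\pi_\ast \de : \pi_\ast \cal{L} \to \pi_\ast \cal{L} \otimes \Omega^1_X (B \cup D)$, and the Leibniz rule for $\pi_\ast \de$ is checked by testing on sections of the form $\pi^\ast f \cdot s$.

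\textbf{Odd structure and $\frak{sl}_2$-trivialisation (main obstacle).} The main work is to produce an isomorphism $\MM : \det (\pi_\ast \cal{L}) \iso \cal{O}_X$ from $\mu$ such that $\MM$ intertwines $\tr (\pi_\ast \de)$ with $\dd$. Over any open $U \subset X$ disjoint from $B$, the preimage $\pi^{-1} (U) = V_+ \sqcup V_-$ splits into sheets exchanged by $\sigma$, so $\pi_\ast \cal{L} \rvert_U \cong \cal{L} \rvert_{V_+} \oplus \cal{L} \rvert_{V_-}$, and $\mu$ descends to a perfect pairing $\cal{L} \rvert_{V_+} \otimes \cal{L} \rvert_{V_-} \iso \cal{O}_X \rvert_U$, which I take as $\MM$ on $\det (\pi_\ast \cal{L}) \rvert_U$. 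Under this identification, $\tr (\pi_\ast \de)$ corresponds to $\de \otimes \sigma^\ast \de$ restricted to $V_+$, which $\mu$ transports to $\de_R \rvert_{V_+} = \dd$ since $R \cap V_+ = \emptyset$. Near a branch point $b \in B$ with local coordinate $w = z^2$ at $\rr \in R$ and a local frame $s$ of $\cal{L}$, the pushforward has $\cal{O}_{X,b}$-basis $\{s, zs\}$; the skew-symmetry $\sigma^\ast \mu = -\mu$ forces $\mu (s \otimes \sigma^\ast s)$ to have the form $u(w) z^{-1}$ with $u \in \cal{O}_{X,b}^\times$, so $\MM (s \wedge zs)$ extends to a unit in $\cal{O}_{X,b}$, and the trace condition extends to $b$ by analytic continuation from the dense open where it already holds. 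This branch-point extension is the most delicate step of the argument and relies on the residue $-1/2$ from \Autoref{181116155648}, consistent with the count $\deg (\det \pi_\ast \cal{L}) = 0$ implicit in \Autoref{181109210007}.

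\textbf{Functoriality and residues at $\pp \in D$.} Functoriality is automatic: for a morphism $\varphi : (\cal{L}, \de, \mu) \to (\cal{L}', \de', \mu')$, the pushforward $\pi_\ast \varphi$ is a morphism of the underlying connections, and the compatibility $\mu' \circ (\varphi \otimes \sigma^\ast \varphi) = \mu$ translates directly into compatibility of the corresponding trivialisations $\MM, \MM'$. For the residue claim, since $\pp \notin B$ the map $\pi$ is étale over a neighbourhood $U \ni \pp$, so $\pi^{-1} (U) = V_- \sqcup V_+$ with $\pp_\pm \in V_\pm$, and $\pi_\ast \cal{L} \rvert_U \cong \cal{L} \rvert_{V_-} \oplus \cal{L} \rvert_{V_+}$ with $\pi_\ast \de$ block-diagonal. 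Passing to the fibre at $\pp$ gives $\Res_\pp (\pi_\ast \de) = \Res_{\pp_-} \de \oplus \Res_{\pp_+} \de$, whose eigenvalues $\mp \lambda$ are by definition the Levelt exponents of $\pi_\ast \de$ at $\pp$.
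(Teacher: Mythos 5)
Your argument is correct and its overall architecture matches the paper's: the identification $\pi^\ast \Omega^1_X (B \cup D) \iso \Omega^1_{\sf{\Sigma}} (R \cup C)$ plus the projection formula for the underlying connection and Leibniz rule, and the observation that $\pi$ is \'etale over $D$ for the residue claim (your local check $\pi^\ast(\dd w/w) = 2\,\dd z/z$ is just a coordinate version of the paper's argument via the vanishing of $\dd\pi$ along $R$). Where you genuinely diverge is the extraction of the $\frak{sl}_2$-trivialisation from $\mu$. The paper does this abstractly, via the canonical isomorphism $\det (\pi_\ast \cal{L}) \cong \det (\pi_\ast \cal{O}_{\sf{\Sigma}}) \otimes \Nm (\cal{L})$ together with $\pi^\ast \Nm (\cal{L}) \cong \cal{L} \otimes \sigma^\ast \cal{L}$ and $\pi^\ast \det (\pi_\ast \cal{O}_{\sf{\Sigma}}) \cong \cal{O}_{\sf{\Sigma}} (-R)$, so that $\mu$ trivialises the determinant after descent. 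You instead build $\MM$ by hand away from $B$ and verify the extension across a branch point in the frame $\set{s, zs}$; this computation is right: skew-symmetry forces $\mu (s \otimes \sigma^\ast s) = u(w) z^{-1}$ with $u$ a unit, and the odd factor $z^{-1}$ cancels against the odd Wronskian-type factor $-2z$ coming from $s \wedge zs$, giving $\MM (s \wedge zs) = -2u(w)$. Your route is more elementary and makes visible exactly where the simple pole of $\cal{O}_{\sf{\Sigma}} (R)$ is consumed; the paper's route is shorter and generalises immediately to covers of higher degree. Two small points you should make explicit: (i) the splitting $\pi^{-1} (U) = V_+ \sqcup V_-$ exists only over opens on which the \'etale cover is trivial --- in particular \emph{not} on the punctured disc around a branch point, where the cover is connected --- so your $\MM$ is defined chart-by-chart on $X \setminus B$ and glued, and only then tested against the single-valued frame $\set{s, zs}$; (ii) the gluing works precisely because the local definition is invariant under swapping the two sheets, which uses $\sigma^\ast \mu = -\mu$ a second time (the sign from switching tensor factors cancels against the sign in $a \wedge b = - b \wedge a$). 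Neither is a gap, just the one line each that completes the argument.
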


\begin{proof}
A logarithmic connection on $(\sf{\Sigma}, R \cup C)$ is a map $\de : \cal{L} \to \cal{L} \otimes \Omega^1_{\sf{\Sigma}} (R \cup C)$, and its direct image is therefore $\pi_\ast \de : \pi_\ast \cal{L} \to \pi_\ast \big( \cal{L} \otimes \Omega^1_{\sf{\Sigma}} (R \cup C) \big)$.
We claim that there is a canonical isomorphism
\eqntag{
	\Omega^1_\sf{\Sigma} (R \cup C) \iso \pi^\ast \Omega^1_X (B \cup D)
\fullstop
}
First, $\pi^\ast \Omega^1_X (B \cup D) = \big( \pi^\ast \Omega^1_X \big) ( \pi^\ast (B \cup D))$, where $\pi^\ast (B \cup D) = 2R \cup C$ (pulled back as a divisor).
The derivative map $\dd \pi : \cal{T}_\sf{\Sigma} \to \pi^\ast \cal{T}_X$ drops rank along $R$; i.e., it is a nonvanishing section of the line bundle $\cal{T}_{\sf{\Sigma}}^\vee \otimes \pi^\ast \cal{T}_X (-R)$, thereby inducing an isomorphism $\pi^\ast \cal{T}_X \iso \cal{T}_\sf{\Sigma} (R)$.
Dualising, we get $\pi^\ast \Omega_X^1 \iso \Omega^1_\sf{\Sigma} (-R)$.
Thus, the projection formula implies $\pi_\ast \big( \cal{L} \otimes \Omega^1_{\sf{\Sigma}} ( R \cup C ) \big) \cong \pi_\ast \cal{L} \otimes \Omega^1_X (B \cup D)$.
To check that $\pi_\ast \de$ satisfies the Leibniz rule, let $e \in \pi_\ast \cal{L}$ be a local section on some open set $U \subset X$, and $f \in \cal{O}_X (U)$.
Then $\pi_\ast \de (fe) = \pi_\ast \big( \de (\pi^\ast f \cdot e ) \big)$.
Now it is clear that the Leibniz rule for $\pi_\ast \de$ follows from the Leibniz rule for $\de$.
Therefore, $(\pi_\ast \cal{L}, \pi_\ast \de)$ is a rank-two logarithmic connection on $(X, B \cup D)$.

To show that the odd structure on $\cal{L}$ induces an $\frak{sl}_2$-structure on $\pi_\ast \cal{L}$, recall that there is a canonical isomorphism $\det (\pi_\ast \cal{L}) \cong \det (\pi_\ast \cal{O}_\sf{\Sigma}) \otimes \Nm (\cal{L})$, where $\Nm (\cal{L})$ is the norm of $\cal{L}$ \cite[Cor. 3.12]{MR2967059}.
For a double cover, there is a canonical isomorphism $\pi^\ast \Nm (\cal{L}) \cong \cal{L} \otimes \sigma^\ast \cal{L}$.
Moreover, it is easy to see that $\pi^\ast \det (\pi_\ast \cal{O}_\sf{\Sigma})$ is canonically isomorphic to $\cal{O}_\sf{\Sigma} (-R)$.
The statement about the residues is obvious because $\pi$ is unramified over $D$.
\end{proof}

\paragraph{Image of $\pi_\ast$.}
One can show that the monodromy of $\pi_\ast \de$ around the branch locus $B$ is a quasi-permutation representation of the double cover $\sf{\Sigma} \to X$ \cite{MR2060367}.
As a result, no connection on $(X,D)$ is the pushforward of an abelian connection on $\sf{\Sigma}$.
In other words, the image of the pushforward functor $\pi_\ast$ in $\Conn^2_{\frak{sl}} (X, B \cup D)$ does not even intersect the subcategory $\Conn^2_{\frak{sl}} (X, D)$.
Abelianisation fixes this problem: in \autoref{181130123445}, we will explicitly construct a deformation of the pushforward functor $\pi_\ast$ which \textit{does} map into $\Conn^2_{\frak{sl}} (X, D)$.

\subsection{Spectral Curves for Quadratic Differentials}
\label{181111161446}

Let $\phi$ be a quadratic differential on $(X,D)$, by which we mean a meromorphic quadratic differential on $X$ with at most order-two poles along $D$; i.e., it is a global holomorphic section of $S^2 \Omega^1_X (2D)$.
The standard reference is \cite{MR743423}; see also \cite[\S\S2,3]{MR3349833}.
By the Riemann-Roch Theorem,
\eqntag{\label{171108125156}
	\dim H^0_X \big( S^2 \Omega^1_X (2D) \big) 
		= 2 |D| + 3 g_X - 3
\fullstop
}

\paragraph{Quadratic residue.}
In any local coordinate $x$ centred at $\pp \in D$, a quadratic differential $\phi$ with a double pole at $\pp$ is expanded as $\phi = (a_\pp x^{-2} + \cdots) \dd{x}^2$.
The coefficient $a_\pp \in \Complex$ is a coordinate-independent quantity, called the (\dfn{quadratic}) \dfn{residue} of $\phi$ at $\pp$ and denoted $\Res_\pp (\phi)$.
The residue of $\phi$ along $D$ is thus a global section $a = \Res (\phi) \in H^0_X (\cal{O}_D)$, same as what we called \textit{residue data} in \autoref{181118210908}. 
There is a \dfn{quadratic residue short exact sequence}:
\eqntag{\label{181113110208}
\begin{tikzcd}[ampersand replacement = \&]
			0
				\ar[r]
	\&		S^2 \Omega^1_X (D)
				\ar[r]
	\&		S^2 \Omega^1_X (2D)
				\ar[r, "\Res"]
	\&		\cal{O}_D
				\ar[r]
	\&		0
\fullstop
\end{tikzcd}
}

\begin{lem}{190507093910}
For any $a \in H^0_X (\cal{O}_D)$, there exists a meromorphic quadratic differential $\phi$ on $(X,D)$ with $\Res (\phi) = a$.
\end{lem}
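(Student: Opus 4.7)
The plan is to deduce the lemma directly from the long exact sequence in cohomology attached to the quadratic residue short exact sequence \eqref{181113110208}, exactly as sketched in the paragraph preceding the statement. Concretely, I would apply the global-sections functor to
\eqn{
    0 \too S^2 \Omega^1_X (D) \too S^2 \Omega^1_X (2D) \xto{\Res} \cal{O}_D \too 0
}
to obtain the exact sequence
\eqn{
    H^0_X \bigl( S^2 \Omega^1_X (2D) \bigr) \xto{\Res} H^0_X (\cal{O}_D) \too H^1_X \bigl( S^2 \Omega^1_X (D) \bigr)
\fullstop
}
Then the lemma reduces to showing that the rightmost cohomology group vanishes.

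The one genuine ingredient is the vanishing $H^1_X \bigl( S^2 \Omega^1_X (D) \bigr) = 0$. On a smooth projective curve this is equivalent, via Serre duality, to the statement $H^0_X \bigl( \Omega^1_X \otimes (S^2 \Omega^1_X (D))^\vee \bigr) = H^0_X \bigl( \cal{T}_X (-D) \bigr) = 0$, which holds whenever $\deg \cal{T}_X (-D) = 2 - 2g_X - |D| < 0$. This is exactly the standing hypothesis $|D| > \chi(X) = 2 - 2g_X$ imposed at the start of the section, so the vanishing is automatic (this is the content of the invocation of Kodaira vanishing in the text). Combining this with the exact sequence above gives surjectivity of $\Res$ on global sections, and any preimage $\phi$ of $a$ is the desired quadratic differential.

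There is no serious obstacle: the only subtlety is keeping track of the numerical condition that makes Kodaira vanishing applicable, and this condition has been built into the setup of the paper from the outset. I would present the proof in one short paragraph consisting of the long exact sequence, the Serre duality / degree computation, and the extraction of a lift.
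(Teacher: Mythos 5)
Your proposal is correct and follows the paper's own route: the paper deduces the lemma from the quadratic residue short exact sequence \eqref{181113110208} together with the vanishing $H^1_X\bigl(S^2\Omega^1_X(D)\bigr)=0$, which it attributes to Kodaira vanishing. Your Serre-duality and degree computation is just the standard curve-level justification of that vanishing (using $|D| > 2 - 2g_X$), so the two arguments are essentially identical.
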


\begin{proof}
By the Kodaira Vanishing Theorem, $H^1_X \big( S^2 \Omega^1_X (D) \big) = 0$, which implies that the residue map $\Res : H^0_X \big( S^2 \Omega^1_X (2D) \big) \to H^0_X \big( \cal{O}_D \big)$ is surjective.
This means that any residue data $a$ decorating the divisor $D$ can be lifted to a quadratic differential $\phi$.
\end{proof}

\paragraph{}
In view of \eqref{171108125156}, the only configuration $(X,D)$ for which there is a unique quadratic differential $\phi$ with specified residues is $(g_X, |D|) = (0,3)$ (i.e., $\Proj^1$ with three marked points).
In this case, the three-dimensional vector space of quadratic differentials $H^0_X \big( S^2 \Omega^1_X (2D) \big)$ can be parameterised by the residues $\alpha, \beta, \gamma$ at the three points of $D$.
Identifying $(X, D)$ with $(\Proj^1, \set{0,1,\infty})$, one can show that the unique quadratic differential with residues $\alpha, \beta, \gamma$ at the double poles $0,1, \infty$ is
\eqntag{\label{190507093741}
	\phi = \frac{\gamma z^2 - (\alpha - \beta + \gamma) z + \alpha}{z^2 (z-1)^2} \dd{z}^2
\fullstop
}

\paragraph{Generic quadratic differentials.}
We will say that a quadratic differential $\phi$ is \dfn{generic} if all zeroes are simple.
The subspace of generic quadratic differentials in $H^0_X \big( S^2 \Omega^1_X (2D) \big)$ is obviously open dense given as the complement of a hypersurface.
If $(g_X, |D|) \neq (0,3)$, then the space of quadratic differentials is at least one-dimensional; but if $(g_X, |D|) = (0,3)$, this is a condition on the residues of $\phi$.
One can use \eqref{190507093741} to calculate that the open subspace of generic quadratic differentials for $(g_X, |D|) = (0,3)$ is the complement of the quadratic hypersurface
\eqntag{\label{190501132613}
	\set{ \alpha^2 + \beta^2 + \gamma^2 - 2 \alpha \beta - 2 \alpha \gamma - 2 \beta \gamma = 0 } \subset \Complex^3_{\alpha \beta \gamma} \cong H^0_X \big( S^2 \Omega^1_X (2D) \big)
\fullstop
}

\begin{lem}{1710120743170}
Let $a \in H^0_X (\cal{O}_D)$ be generic residue data.
If $(g_X, |D|) = (0, 3)$, assume in addition that $a$ is contained in the complement of the hypersurface \eqref{190501132613}.
Then there exists a generic quadratic differential $\phi$ on $(X,D)$ such that $\Res (\phi) = a$.
\end{lem}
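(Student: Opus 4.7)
The plan is to split into two cases according to the alternative in the hypothesis. If $(g_X, |D|) = (0, 3)$, then \Autoref{190507093910} produces a unique quadratic differential with residues $a$, namely the one given by \eqref{190507093741}. Since $\gamma = a_\infty \neq 0$ by genericity of $a$, the numerator $N(z) = \gamma z^2 - (\alpha - \beta + \gamma) z + \alpha$ is a genuine quadratic, and a direct computation of its discriminant yields
\eqn{
	(\alpha - \beta + \gamma)^2 - 4\alpha\gamma
		= \alpha^2 + \beta^2 + \gamma^2 - 2\alpha\beta - 2\alpha\gamma - 2\beta\gamma
	\fullstop{,}
}
which is exactly the polynomial cutting out the hypersurface \eqref{190501132613}. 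The hypothesis on $a$ thus says precisely that this discriminant is nonzero, so $N$ has two distinct roots, neither of which coincides with a pole (that would force $\alpha$ or $\beta$ to vanish); equivalently, $\phi$ has two simple zeros, so $\phi$ is generic.

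For $(g_X, |D|) \neq (0, 3)$, I would first apply \Autoref{190507093910} to get some $\phi_0 \in H^0_X \big( S^2\Omega^1_X(2D) \big)$ with $\Res(\phi_0) = a$, and consider the fiber $F \coleq \phi_0 + K$ of $\Res$ over $a$, where $K \coleq H^0_X \big( S^2\Omega^1_X(D) \big)$. The dimension count \eqref{171108125156} combined with the kernel/image exact sequence for $\Res$ gives $\dim K = 3g_X + |D| - 3 \geq 1$ in all remaining cases, so $F$ has positive dimension. The strategy is then to apply Bertini's theorem to the linear subsystem $W \coleq \Complex \phi_0 + K \subset H^0_X \big( S^2\Omega^1_X(2D) \big)$: if $W$ is base-point-free, a generic member of $\Proj(W)$ has reduced zero divisor and hence only simple zeros. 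Since $\phi_0 \notin K$ (its residue is nonzero), the affine subset $F$ embeds into $\Proj(W)$ as the complement of the hyperplane $\Proj(K)$, which is open dense, and a Bertini-generic member can always be realised inside $F$ after rescaling.

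The bulk of the work is therefore verifying that $W$ is base-point-free. At any $x \in D$, every $\psi \in K$ vanishes at $x$ when viewed as a section of $S^2\Omega^1_X(2D)$ because its pole at $x$ is only of order one instead of two; but in the local trivialisation $z^{-2}\dd{z}^2$ of $S^2\Omega^1_X(2D)$, one has $\phi_0(x) = a_x \neq 0$ by genericity, so $\phi_0 \in W$ does not vanish at $x$. At any $x \in X \setminus D$, the evaluation short exact sequence
\eqn{
	0 \too S^2\Omega^1_X(D - x) \too S^2\Omega^1_X(D) \too \big(S^2\Omega^1_X(D)\big)_x \too 0
}
combined with Serre duality $H^1_X\big( S^2\Omega^1_X(D - x) \big) \cong H^0_X\big( \cal{T}_X(x - D) \big)^\vee$ shows that the evaluation map $K \to \big(S^2\Omega^1_X(D)\big)_x$ is surjective whenever $\deg \cal{T}_X(x - D) = 3 - 2g_X - |D| < 0$. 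This inequality follows from the standing hypothesis $|D| > 2 - 2g_X$ except in the boundary cases $(g_X, |D|) = (0, 3)$ and $(1, 1)$. The first is excluded by assumption, and in the second the line bundle $S^2\Omega^1_X(D) \cong \cal{O}_X(p)$ (with $D = \set{p}$) has its unique base point at $p$ itself, which once again sits inside $D$ and is handled by $\phi_0(p) = a_p \neq 0$. Thus $W$ is base-point-free in every case, and Bertini completes the proof.

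The main obstacle I expect is precisely this edge-case analysis for base-point-freeness: handling the fact that all sections in $K$ automatically vanish along $D$ after the inclusion into $H^0_X \big( S^2\Omega^1_X(2D) \big)$, and covering the degenerate case $(g_X, |D|) = (1, 1)$ in which $S^2\Omega^1_X(D)$ itself fails to be globally generated. The key observation that fixes both problems simultaneously is that genericity of $a$ forces $a_x \neq 0$ at every $x \in D$, so $\phi_0$ itself contributes a non-vanishing element of $W$ at each such point.
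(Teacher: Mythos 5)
Your argument is correct, but it is considerably more elaborate than what the paper actually does: the paper offers no formal proof of this lemma at all. Its implicit reasoning is the two-sentence discussion preceding the statement --- the locus of quadratic differentials with a multiple zero is a hypersurface in $H^0_X\big(S^2\Omega^1_X(2D)\big)$, the fibre of $\Res$ over $a$ is a positive-dimensional affine subspace whenever $(g_X,|D|)\neq(0,3)$, and in the exceptional case $(0,3)$ the explicit formula \eqref{190507093741} reduces everything to the discriminant computation giving \eqref{190501132613}. Your treatment of the $(0,3)$ case is exactly this computation (including the useful observations that $N(0)=\alpha$, $N(1)=\beta$ and $\gamma\neq 0$ rule out zeroes colliding with $D$). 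For the remaining cases, however, you have identified and repaired a genuine gap in the paper's density argument: an open dense subset of the total space need not meet a given affine fibre of $\Res$, so one must still show that the discriminant does not vanish identically on $\Res^{-1}(a)$. Your route --- base-point-freeness of the linear system spanned by $\phi_0$ and $K=H^0_X\big(S^2\Omega^1_X(D)\big)$, followed by Bertini and a rescaling to land back in the affine fibre --- does establish this, and the edge-case analysis (sections of $K$ all vanishing along $D$, the case $(g_X,|D|)=(1,1)$ where $S^2\Omega^1_X(D)$ is not globally generated) is handled correctly by the observation that genericity forces $a_\pp\neq 0$, so $\phi_0$ itself is nonvanishing at each point of $D$. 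The trade-off is clear: the paper's version is a quick plausibility argument that leaves the intersection claim unjustified, while yours is longer but complete; a middle road would be to note that the discriminant is a polynomial function on the affine fibre and exhibit a single element of the fibre with simple zeroes, which is in effect what your Bertini step produces.
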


\begin{example}{210610162950}
Consider the following examples of meromorphic quadratic differentials on $X \coleq \PPP^1_z$:
\eqnstag{
	\phi_1 &\coleq \tfrac{1}{9} \frac{z^2 - z + 1}{z^2 (z-1)^2} \dd{z}^2
\fullstop{,}
\\
	\phi_2 &\coleq \tfrac{1}{9} \frac{z^4 + 1}{z^2 (z-1)^2 (z+1)^2} \dd{z}^2
\fullstop{,}
\\
	\phi_3 &\coleq e^{3\pi i/4} \tfrac{16}{15} \frac{4 z^4 + 15i z^2 + 4}{(z^4 + 1)^2} \dd{z}^2
\fullstop
}
The quadratic differential $\phi_1$ is of the form \eqref{190501132613} with $\alpha = \beta = \gamma = 1/9$.
They respectively have double poles along $D_1 \coleq \set{0, 1, \infty}$, $D_2 \coleq \set{0, \pm 1, \infty}$, and $D_3 \coleq \set{e^{\pm \pi i /4}, e^{\pm 3\pi i /4}}$.
Each quadratic residue of $\phi_1$ and $\phi_2$ is $1/9$; each quadratic residue of $\phi_3$ is $e^{\pi i /4}$.
The quadratic differential $\phi_1$ has two simple zeros at $e^{\pm \pi i/3}$.
The quadratic differentials $\phi_2, \phi_3$ both have four simple zeros; they are respectively $e^{\pm \pi i/4}, e^{\pm 3\pi i/4}$ and $\pm \tfrac{1}{2} e^{\pi i/4}, \pm 2 e^{3\pi i/4}$.
Consequently, all three of these quadratic differentials are generic with generic residues.
\end{example}

\paragraph{The log-cotangent bundle.}
Let $Y$ be the total space of $\Omega^1_X (D)$, sometimes called the \dfn{log-cotangent bundle}, and let $p : Y \to X$ be the projection map.
Like the usual cotangent bundle, the log-cotangent bundle $Y$ has a canonical one-form, which can be constructed as follows.
Let $\theta \in H^0 \big(Y, p^\ast \Omega^1_X (D) \big)$ be the tautological section.
Then the fibre product
\eqntag{
\begin{tikzcd}[ampersand replacement=\&]
			\cal{A}
					\ar[r]
					\ar[d]
					\ar[dr, phantom, "\ulcorner", very near start]
	\&		p^\ast \cal{T}_X (-D)
					\ar[d]
\\			\cal{T}_Y
					\ar[r,"p_\ast"']
	\&		p^\ast \cal{T}_X
\fullstop{,}
\end{tikzcd}
}
exists in the category of vector bundles, because $p : Y \to X$ is a surjective submersion.
Unravelling the definition of the fibre product, we find that $\cal{A}$ consists of all vector fields on $Y$ that are tangent to the divisor $p^\ast D \subset Y$; i.e., $\cal{A} \cong \cal{T}_Y (- \log p^\ast D)$.
Finally, dualising the surjective map $\cal{A} \to p^\ast \cal{T}_X (-D)$ yields an injective morphism $p^\ast \Omega^1_X (D) \inj \Omega^1_Y (\log p^\ast D)$.
The \dfn{canonical one-form} $\eta^\phantomindex_Y \in H^0 \big( Y, \Omega^1_Y (\log p^\ast D) \big)$ on $Y$ is then defined as the image of the tautological section $\theta$ under this map.

\begin{example}{210610162432}
Take $X = \PPP^1_z$ with $D = \set{0,1,\infty}$.
Then $\Omega^1_{\PPP^1} (D)$ has a trivialisation over the affine $z$-chart given by the logarithmic one-form $z^{-1} (z-1)^{-1} \dd{z}$.
With respect to this trivialisation, the canonical one-form $\eta_\YY$ is simply $y z^{-1} (z-1)^{-1} \dd{z}$ where $y$ is the linear coordinate in the fibre.
\end{example}

\paragraph{The spectral curve.}
If $\phi$ is a quadratic differential on $(X,D)$, then $p^\ast \phi$ is a section of $S^2 \big( \Omega^1_Y (\log p^\ast D) \big)$ via $p^\ast \Omega^1_X (D) \inj \Omega^1_Y (\log p^\ast D)$.
The \dfn{spectral curve} of $\phi$ is the zero locus in $Y$ of the section $\eta^2_Y - p^\ast \phi \in S^2 \big( \Omega^1_Y (\log p^\ast D) \big)$:
\eqntag{\label{181109194123}
	\sf{\Sigma} \coleq \sfop{Zero} \big( \eta^2_Y - p^\ast \phi \big)
\fullstop
}
We denote by $\pi : \sf{\Sigma} \to X$ the restriction to $\sf{\Sigma}$ of the canonical projection $p : Y \to X$.
We also denote the ramification divisor by $R \subset \sf{\Sigma}$ and the branch divisor by $B \subset X$.
As a double cover, $\sf{\Sigma}$ is equipped with a canonical involution $\sigma: \sf{\Sigma} \to \sf{\Sigma}$.

If $\phi$ is generic, then $\sf{\Sigma}$ is embedded in $Y$ as a smooth divisor, and the projection $\pi : \sf{\Sigma} \to X$ is a simply ramified double cover, branched exactly at the zeroes of $\phi$, and trivial over the points of $D$.
Its genus is  $g^\phantomindex_\sf{\Sigma} = |D| + 4 (g^\phantomindex_X - 1) + 1$.
(see, e.g., \cite[remark 3.2]{MR998478}).
Using the Riemann-Hurwitz formula, the number of ramification points $|R|$ of $\pi$, which is the same as the number of zeroes $|B|$ of $\phi$, is 
\eqntag{\label{181009134400}
	|R| = |B| = 2 |D| + 4 (g^\phantomindex_X - 1)
\fullstop
}

\begin{example}{210610163747}
For the quadratic differential $\phi_1$ from \autoref{210610162950}, the spectral curve $\sf{\Sigma}$ has genus $0$, hence is a copy of $\PPP^1$.
If we trivialise $\Omega^1_{\PPP^1} (D)$ over the affine $z$-chart using the differential form $z^{-1} (z-1)^{-1} \dd{z}$, then $\sf{\Sigma}$ is given by the equation $y^2 = \tfrac{1}{9} (z^2 - z + 1)$.
For both quadratic differentials $\phi_2$ and $\phi_3$, the spectral curve has genus $1$, so it is an elliptic curve over $\PPP^1$, and it is given by $y^2 = \tfrac{1}{9} (z^4 + 1)$.

Notice that, although the quadratic differential $\phi_1$ is singular at the points $0,1$ in the affine $z$-chart, its spectral curve $\sf{\Sigma}$ is perfectly well-behaved above these points (see \autoref{181119191225}).
This is a manifestation of the fact that our spectral curve $\sf{\Sigma}$ is embedded inside the total space of the logarithmic cotangent bundle rather than the usual cotangent bundle.
In contrast, constructing a spectral curve of $\phi_1$ using the same equations but in the usual cotangent bundle yields a curve which escapes from the total space above the points $0,1$ (see \autoref{181119200059}).
\end{example}

\begin{figure}
\begin{minipage}{0.45\linewidth}
\centering
\includegraphics[trim=5pt 5pt 5pt 5pt, clip, height=2.5cm]{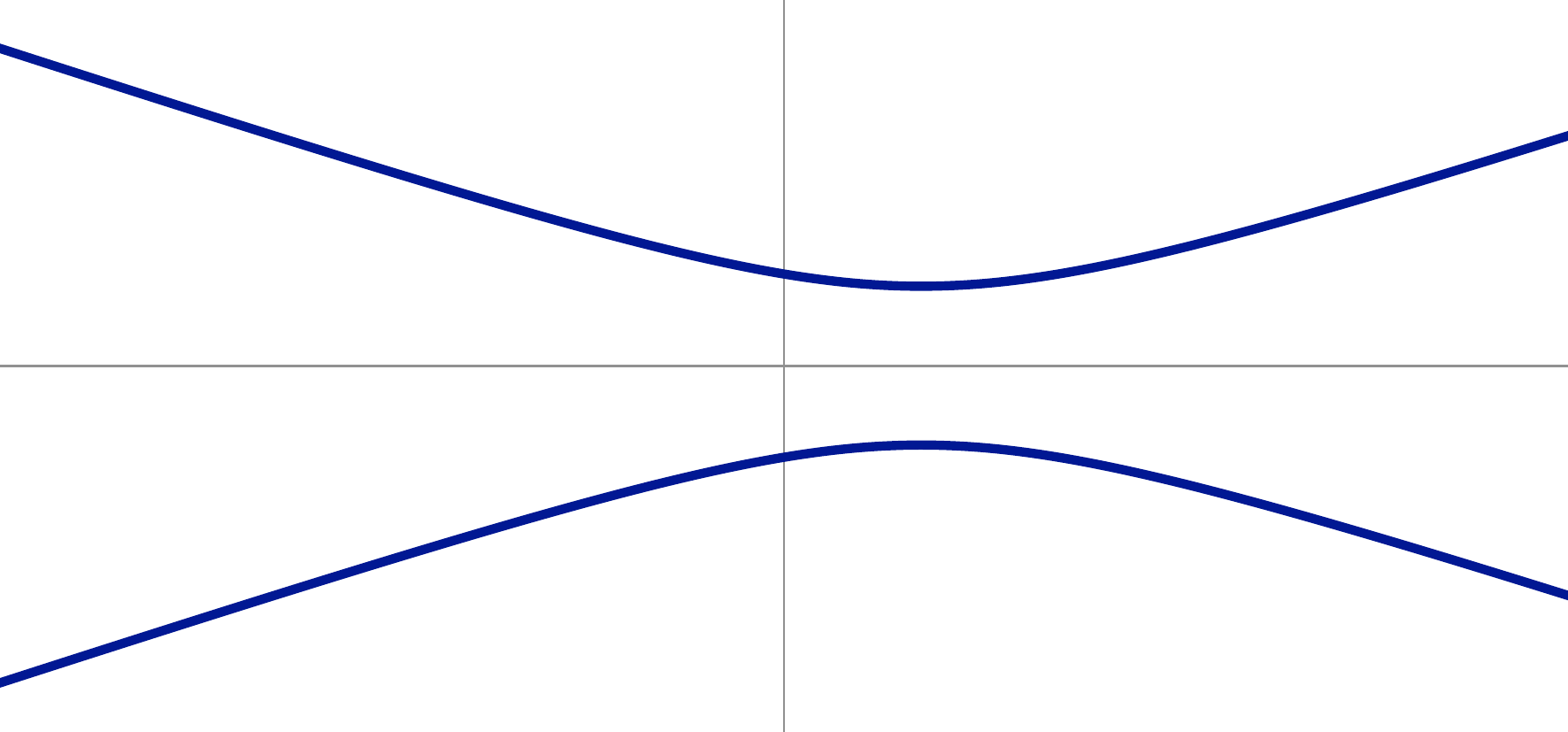}
\caption{A real slice of the total space of $\Omega^1_{\PPP^1} (D)$ over the real line in $\PPP^1_z$.
In blue is the spectral curve $\sf{\Sigma}$ of the quadratic differential $\phi_1$ from \autoref{210610162950}.}
\label{181119191225}
\end{minipage}
\hspace{0.5cm}
\begin{minipage}{0.45\linewidth}
\centering
\includegraphics[trim=5pt 5pt 5pt 5pt, clip, height=4cm]{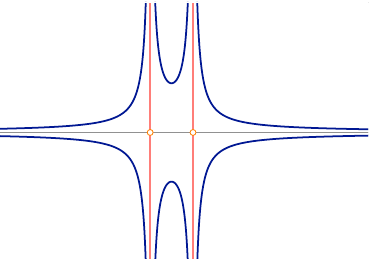}
\caption{A real slice of the total space of $\Omega^1_{\PPP^1}$ over the real line in $\PPP^1_z$.
In blue is the curve given by the equation $(y\dd{z})^2 = \phi_1$, where $\phi_1$ is the quadratic differential from \autoref{210610162950}.}
\label{181119200059}
\end{minipage}
\end{figure}

\paragraph{The canonical one-form.}
\label{181111154704}
Pulling back the canonical one-form $\eta^\phantomindex_Y$ to $\sf{\Sigma}$ yields a differential form $\eta$ with logarithmic poles along $C \coleq \pi^{-1} (D)$, called the \dfn{canonical one-form} on $\sf{\Sigma}$.
It satisfies $\eta^2 = \pi^\ast \phi$ and $\sigma^\ast \eta = - \eta$, and can therefore be thought of as the `canonical square root' of the quadratic differential $\phi$.
It has zeroes along the ramification locus $R$, and its residues at the two preimages $\pp_\pm \in C$ of any point $\pp \in D$ satisfy $\Res_{\pp_-} \eta = -  \Res_{\pp_+} \eta$ and $\big( \Res_{\pp_\pm} \eta \big)^2 = \Res_\pp \phi$.
If the residue data $a = \Res (\phi)$ is generic, we can fix an order on the preimages of $\pp$:
\eqntag{\label{181109195444}
	\pp_- \prec \pp_+
\qtext{$\coliff$}
	\Re \big( \Res_{\pp_-} \eta \big) < 0 < \Re \big( \Res_{\pp_+} \eta \big)
\fullstop
}
If $\pp_- \prec \pp_+$, we shall call $\pp_-$ a \dfn{sink pole} and $\pp_+$ a \dfn{source pole}.
The divisor $C$ is thus decomposed equally into sinks and sources $C = C^- \sqcup C^+$.

\subsection{Logarithmic Connections and Spectral Curves}
\label{191115171958}

In general, connections do not have an invariant notion of eigenvalues or eigenvectors.
However, in the presence of a spectral curve, we can make sense of these notions as follows.

\paragraph{}
Let $\pi: \sf{\Sigma} \to X$ be the spectral curve of a generic quadratic differential $\phi$ with generic residue data $a$ along $D$.
Suppose $(\cal{E}, \nabla) \in \Conn_X^2$ is a logarithmic $\frak{sl}_2$-connection on $(X,D)$ with residue data $a$.
If $\pp \in D$, let $\pm \lambda_\pp$ be the Levelt exponents at $\pp$, which by construction are the residues of $\eta$ at the preimages $\pp_\pm \in C$.
Consider the local diagonal decomposition $\cal{E}_\pp \cong \Lambda_\pp^- \oplus \Lambda_\pp^+$.

Let $z$ be a local coordinate on $\sf{\Sigma}$ centred at $\pp_\pm$ in which $\eta$ is in normal form $\pm \lambda_\pp \dd{z}/z$.
Since $\sf{\Sigma}$ is unramified over $\pp$, we also use $z$ as a local coordinate on $X$ centred at $\pp$.
If we fix a basepoint $\pp_\ast$ near $\pp$, then examining the Levelt normal form of $\nabla_\pp$ with respect to the coordinate $z$ we obtain germs of (multivalued) flat sections $\psi^\pm_\pp$ which can be expressed as $\psi^\pm_\pp = f^\pm_\pp e^\pm_\pp$, where $e^\pm_\pp$ is a (univalued) generator of $\Lambda_\pp^\pm$, and $f^\pm_\pp$ is the germ of a (multivalued) function defined in the coordinate $z$ by 
\eqntag{
	f^\pm_\pp (z) = \exp \left( - \int\nolimits_{\pp_\ast}^z \pm \lambda_\pp \dd{z'} / z' \right)
\fullstop
}
The observation is that the integrand in this expression is precisely the canonical one-form $\eta$ thought of as written in the local coordinate $z$ near $\pp$.

\paragraph{}
To express this in a coordinate-free way, let $U \subset X$ be any simply connected open neighbourhood of $\pp$ disjoint from $B$ and all other points of $D$.
Then $U$ has two disjoint preimages $U_\pm$ on $\sf{\Sigma}$ where $U_\pm$ contains $\pp_\pm$.
Let $\eta_\pm$ be the restriction of $\eta$ to $U_\pm$, and we can think of $\eta_\pm$ as being defined on $U$.
Define (multivalued) functions on the punctured neighbourhood $U^\circ \coleq U \setminus \set{\pp}$ by
\eqntag{
	f_\pm (\qq) \coleq \exp \left( - \int\nolimits_{\pp_\ast}^\qq \eta_\pm \right)
\fullstop
}
Note that the germ of $f_\pm$ at $\pp$ is precisely $f_\pp^\pm$, and that $f_\pm$ satisfies the differential equation $\dlog f_\pm = - \eta_\pm$; moreover, $f_\pm$ is nowhere-vanishing on $U^\circ$.
Analytically continue the solutions $\psi^\pm_\pp$ to multivalued flat sections $\psi_\pm$ of $\cal{E}$ over $U^\circ$, and define
\eqntag{
	e_\pm \coleq f_\pm^{-1} \psi_\pm
\fullstop
}
These sections of $\cal{E}$ form a basis of holomorphic generators over $U$ satisfying
\eqntag{
	\nabla e_\pm = \eta_\pm \otimes e_\pm
\fullstop
}
Thus, we can think of $e_\pm$ as an \dfn{eigensection} of $\nabla$ with \dfn{eigenvalue} $\eta_\pm$, and the line subbundles $\Lambda_U^\pm \subset \evat{\cal{E}}{U}$ that they generate determine the flat \textit{eigen}-decomposition of $(\cal{E}, \nabla)$ over $U$ that uniquely continues the local diagonal decomposition of $\cal{E}_\pp$:
\eqntag{
	\evat{\cal{E}}{U} \iso \Lambda_U^- \oplus \Lambda_U^+
\qqtext{with}
	\nabla \simeq \de^- \oplus \de^+
\fullstop
}

\paragraph{}
More invariantly, let $\smalltilde{U} \subset \sf{\Sigma}$ be any simply connected neighbourhood of a pole $\pp \in C = \pi^{-1} (D) \subset \sf{\Sigma}$ which is disjoint from $R$ and all other points of $C$.
Let $f$ be any (multivalued) solution of the differential equation $\dlog f = - \eta$ defined over the punctured neighbourhood $\smalltilde{U}^\ast \coleq \smalltilde{U} \setminus \set{\pp}$.
Then the same calculation as above shows that the pullback $\pi^\ast \cal{E}$ over $\smalltilde{U}$ has a section $e$ which is an eigensection of $\pi^\ast \nabla$ with eigenvalue $\eta$:
\eqntag{
	\pi^\ast \nabla e = \eta \otimes e
\fullstop
}

\subsection{The Stokes Graph}
\label{181114164952}

Fix some generic residue data $a$.
If $(g_X, |D|) = (0, 3)$, assume in addition that $a$ is contained in the complement of the hypersurface \eqref{190501132613}.
For any generic quadratic differential $\phi$ on $(X,D)$ with residues $a$, let $\sf{\Sigma}$ be its spectral curve with canonical one-form $\eta$.

\paragraph{The horizontal foliation.}
The curves $X$ and $\sf{\Sigma}$, viewed as real two-dimensional surfaces, are naturally equipped with singular foliations $\frak{F}$ and $\vec{\frak{F}}$, respectively, with the property that $\vec{\frak{F}} \tinyoverset{\pi}{\too} \frak{F}$ is the orientation double cover of $\frak{F}$.
These foliations are well-known (see, e.g., \cite{MR743423,MR523212}), and we only recall what is necessary (see \cite[\S3]{MR3349833} for a concise survey).
The foliation $\vec{\frak{F}}$ can be defined as the integration of the real distribution $\cal{ker} \big( \Im (\eta) \big)$ inside the real tangent bundle of $\sf{\Sigma}$.
Concretely, the local equation for a leaf passing through a point $\pp$ is given by $\Im \left( \: \int_{\pp}^{\zz} \eta \: \right) = 0$.
Evidently, this foliation is singular at the poles $C = \pi^{-1} (D)$ and at the ramification points $R$.
The foliation $\frak{F}$, defined as the image of $\vec{\frak{F}}$ under $\pi$, is often called the \dfn{horizontal foliation} for the quadratic differential $\phi$; it is singular at the poles $D$ and the branch points $B$.
A leaf of $\frak{F}$ (or $\vec{\frak{F}}$) is \dfn{critical} if one of its endpoints belongs to $B$ (or $R$).
A critical leaf of $\frak{F}$ is a \dfn{saddle trajectory} if both of its endpoints belong to $B$.

\paragraph{}
If the horizontal foliation $\frak{F}$ has no saddle trajectories, then by \cite[Lemma 3.1]{MR3349833} the open real surface $X \setminus (D \cup B \cup \Gamma)$, where $\Gamma$ is the union of all critical leaves of $\frak{F}$, decomposes into a finite disjoint union of topological open discs, called \textit{horizontal strips} (\autoref{181127205624}).
Similarly, the open real surface $\sf{\Sigma} \setminus (C \cup R \cup \vec{\Gamma})$, where $\vec{\Gamma}$ is the union of all critical leaves of $\vec{\frak{F}}$, is also a finite disjoint union of horizontal strips (\autoref{181127205624}).
\begin{figure}[t]
\centering
\includegraphics{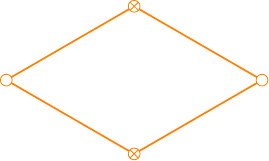}
\hspace{1cm}
\includegraphics{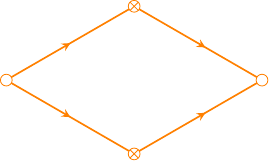}
\caption{A \dfn{horizontal strip} on $X$ (left) and on $\sf{\Sigma}$ (right).
Topologically an open disc, the boundary consists of exactly four critical leaves of $\frak{F}$ or $\vec{\frak{F}}$, two points in $D$ or $C$ (not necessarily distinct), and two points in $B$ or $R$ (necessarily distinct).
The preimage of a horizontal strip on $X$ is a pair of horizontal strips on $\sf{\Sigma}$.
\textit{Notation:} points in $B$ or $R$ are denoted by {\protect\includegraphics{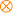}}; points in $D$ or $C$ are denoted by {\protect\includegraphics{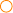}}.
}
\label{181127205624}
\end{figure}

\paragraph{Saddle-free quadratic differentials and very generic residues.}
\label{191115181047}
If the horizontal foliation $\frak{F}$ has no saddle trajectories, then the quadratic differential $\phi$ is said to be \dfn{saddle-free}.
It follows from \cite[Lemma 4.11]{MR3349833} that the subset of quadratic differentials which are saddle-free is open dense.
Note that ``saddle-free'' may be a condition on the residue data $a$.
For example, if $(g_X^\phantomindex, |D|) = (0, 3)$, the quadratic differential $\phi$ with given residues $a$ is unique (given by \eqref{190507093741}) and may fail to be saddle-free.
In this case, there are only two ramification points $\rr_\pm \in \sf{\Sigma}$, so a saddle trajectory occurs if and only if the canonical one-form $\eta$ satisfies $\Im \left( \: \int_{\rr_-}^{\rr_+} \eta \: \right) = 0$ for a path of integration in $\sf{\Sigma} \setminus C \cong \Proj^1 \setminus \set{\text{$6$ points}}$.
If $\bb_\pm \in B$ are the two branch points, then upon identifying $X \cong \Proj^1$ and choosing a branch cut in order to write $\eta$ with $\sqrt{\phi}$, where $\phi$ is given by \eqref{190507093741}, this integral can be explicitly computed in terms of logarithms and it defines a closed real-analytic subset of $\Complex^3_{\alpha \beta \gamma}$.
It therefore determines an explicit condition on the residues $a = \set{\alpha, \beta, \gamma}$ for the unique $\phi$ to be saddle-free.
We will say that residue data $a$ is \dfn{very generic} if there exists a generic saddle-free quadratic differential $\phi$ with residues $a$.

Ultimately, however, this apparent rigidity in our construction is artificial and can be removed by using a more topological argument.
We will study this as well as other non-generic situations elsewhere.

\begin{figure}[t]
\centering
\includegraphics[trim=50pt 5pt 5pt 5pt, clip, height=4cm]{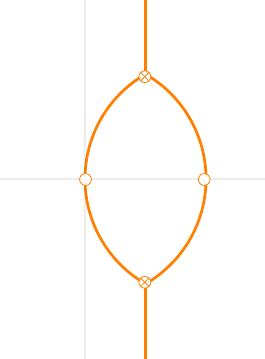}
\includegraphics[trim=5pt 5pt 5pt 5pt, clip, height=4cm]{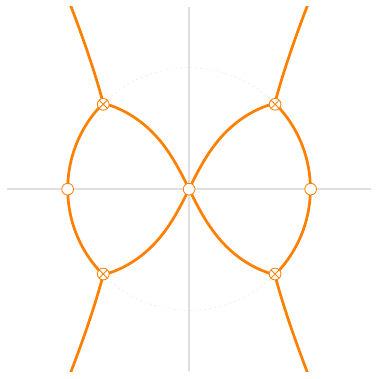}
\includegraphics[trim=5pt 5pt 5pt 5pt, clip, height=4cm]{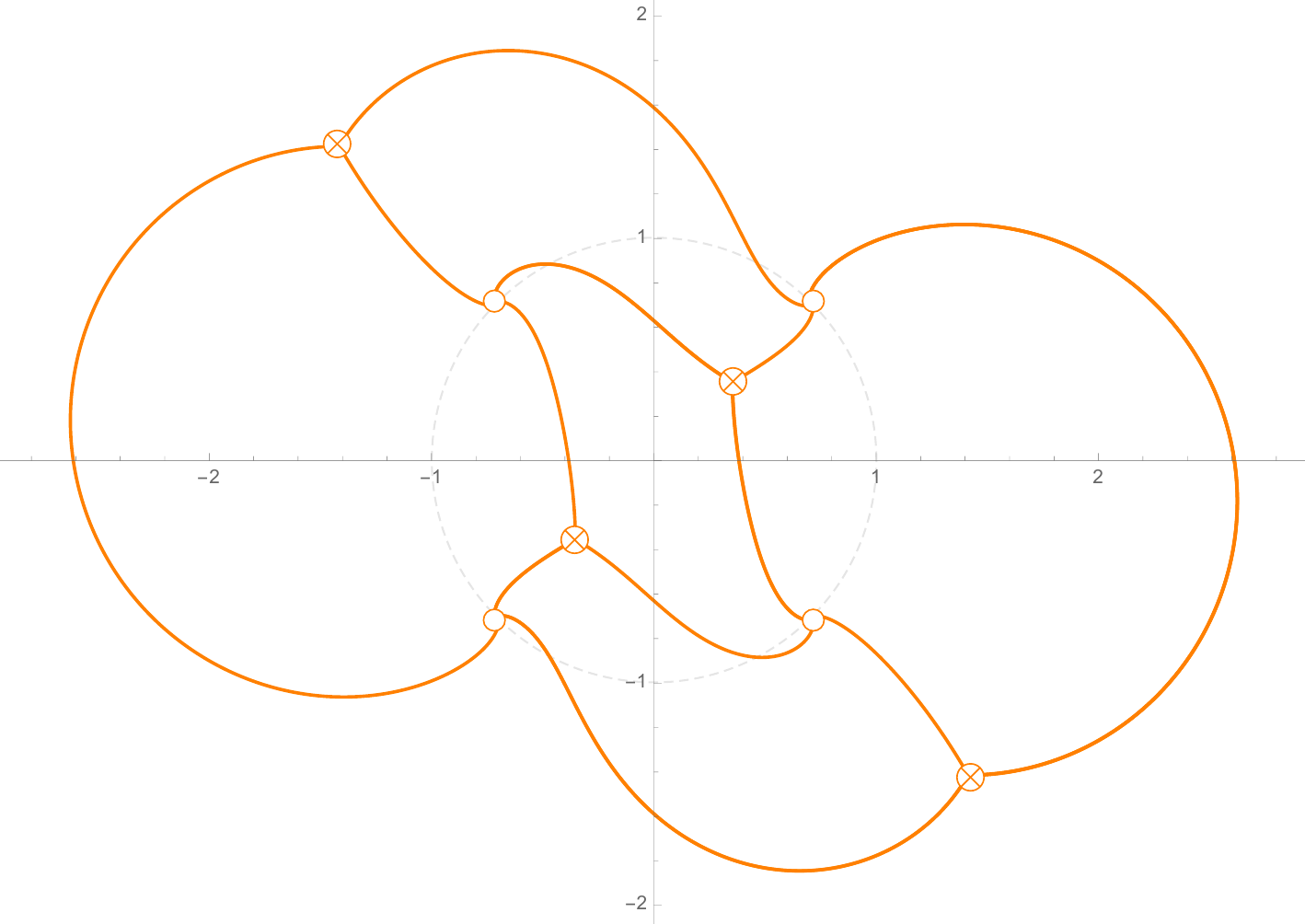}
\caption{From left to right: plot of critical trajectories of quadratic differentials $\phi_1, \phi_2, \phi_3$ from \autoref{210610162950}.
In plots 1 and 2, the trajectories that escape the picture frame tend to infinity.}
\label{210610172502}
\end{figure}

\begin{example}{210610163716}
All three quadratic differentials $\phi_1, \phi_2, \phi_3$ from \autoref{210610162950} are saddle-free.
The true plots of their critical trajectories are presented in \autoref{210610172502}.
\end{example}

\paragraph{The Stokes and spectral graphs.}
\label{200603101941}
Now we define the main combinatorial gadgets in our construction.
Let $\phi$ be a generic and saddle-free quadratic differential.

\begin{defn}[Stokes graph, spectral graph]{181031110404}
The \dfn{Stokes graph} $\Gamma$ is the graph on $X$ whose vertices are $D \cup B$ and whose edges are the critical leaves of $\frak{F}$.
The \dfn{spectral graph} $\vec{\Gamma}$ is the oriented graph on $\sf{\Sigma}$ whose vertices are $C \cup R$ and whose edges are the critical leaves of $\vec{\frak{F}}$.
\end{defn}

Thus, $\vec{\Gamma} \overset{\pi}{\to} \Gamma$ is a (ramified) orientation double cover of graphs.
Each face of $\Gamma$ and $\vec{\Gamma}$ is a horizontal strip.
We refer to the edges and the faces of $\Gamma$ as \dfn{Stokes rays} and \dfn{Stokes regions}; and to the edges and the faces of $\vec{\Gamma}$ as \dfn{spectral rays} and \dfn{spectral regions}.
The graphs $\Gamma, \vec{\Gamma}$ are bipartite with bipartitions $\Gamma_0 = D \cup B$ and $\vec{\Gamma}_0 = C \cup R$.
\begin{figure}[t]
\centering
\includegraphics{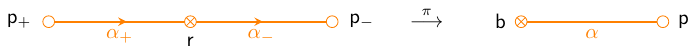}
\caption{%
Every spectral ray and every Stokes ray has a polar vertex and a ramification/branch vertex.
Depicted are the pair of opposite spectral rays $\alpha_+, \alpha_-$ on $\sf{\Sigma}$ in the preimage of the Stokes ray $\alpha$ on $X$.
\textit{Notation:}
We index Stokes rays by $\alpha, \beta, \ldots$; the corresponding positive spectral rays are denoted by $\alpha_+, \beta_+, \ldots$ and the negative ones by $\alpha_-, \beta_-, \ldots$.
}
\label{190123080855}
\end{figure}
The polar vertices $C$ are further divided into sinks and sources (cf. \autoref{181111154704}):
\begin{itemise}
\item \dfn{sink vertices $C_-$}: those where $\Re (\Res \eta) < 0$;
\item \dfn{source vertices $C_+$}: those where $\Re (\Res \eta) > 0$.
\end{itemise}
If $\pp \in D$, we will always denote its preimages in $C$ by $\pp_-, \pp_+$ where $\pp_\pm \in C_\pm$.
They satisfy the relation $\sigma (\pp_\pm) = \pp_\mp$.
All spectral rays incident to a sink/source are oriented into/out of the sink/source, so spectral rays $\vec{\Gamma}_1$ are divided by \dfn{parity}:
\begin{itemise}
\item \dfn{positive spectral rays} $\vec{\Gamma}_1^+$: polar vertex is a source;
\item \dfn{negative spectral rays} $\vec{\Gamma}_1^-$: polar vertex is a sink.
\end{itemise}

\paragraph{}
Spectral rays always occur in pairs: the involution $\sigma$ maps a spectral ray to a spectral ray of opposite parity.
Stokes rays have no natural notion of parity; instead, the preimage of every Stokes ray $\alpha \in \Gamma_1$ is a pair of opposite spectral rays $\alpha_+ \in \vec{\Gamma}_1^+, \alpha_- \in \vec{\Gamma}_1^-$ (see \autoref{190123080855}).
The graphs $\Gamma, \vec{\Gamma}$ are squaregraphs: every Stokes region is a quadrilateral with two branch vertices and two polar vertices, and its boundary is made up of four Stokes rays (\autoref{190123082314}).
\begin{figure}[t]
\centering
\includegraphics[width=\textwidth]{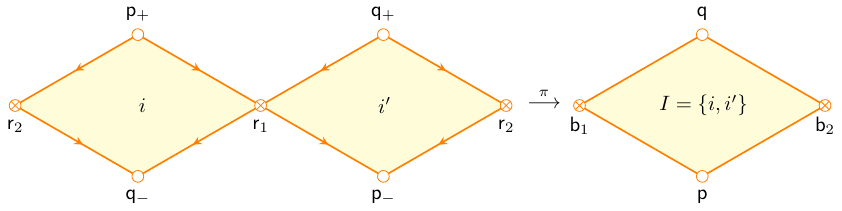}
\caption{%
Two spectral regions $i, i'$ in the preimage of the Stokes region $\II = \set{i, i'}$.
Here, $\rr_1, \rr_2 \in R$ are the ramification points above the branch points $\bb_1, \bb_2 \in B$.
\textit{Notation:}
We index faces of $\vec{\Gamma}$ by letters $i, j, k, \ldots$, though if two faces are both preimages of the same Stokes region $\II$, we will usually call them $i, i'$.
A face of $\Gamma$, whose preimage consists of faces $i, i'$ of $\vec{\Gamma}$, is indexed by the unordered pair $\II = \set{i, i'}$.
Notice that if a Stokes region $\II = \set{i,i'}$ has polar vertices $\pp, \qq \in D$, and if the spectral region $i$ has polar vertices $\pp_+, \qq_-$, then the spectral region ${i'}$ has polar vertices $\pp_-, \qq_+$.}
\label{190123082314}
\end{figure}
Similarly, every spectral region is a quadrilateral with two ramification vertices and two polar vertices (one of which is a source and one is a sink), and its boundary is made up of four spectral rays (two of which are positive and two are negative).
We index them as described in \autoref{190123082314}:
\eqntag{
	\Gamma_2
		= \set{ \II = \set{i, i'} ~\Big|~ i,i' \in \vec{\Gamma}_2 \text{ with } \sigma (i) = i'}
\fullstop
}
Each branch point has three incident Stokes rays and three incident Stokes regions, but each Stokes region has two branch vertices, so there are $3|B|$ Stokes rays and $\tfrac{3}{2}|B|$ Stokes regions in total.
So, using \eqref{181009134400},
\eqnstag{\label{181111182416}
	\big| \Gamma_1 \big| = \: 6 |D| + 12 (g^\phantomindex_X - 1)
&\qtext{and}
	\big| \Gamma_2 \big| = 3 |D| + 6 (g^\phantomindex_X - 1)
\\	\label{181111182308}
	\big| \vec{\Gamma}_1 \big| = 12 |D| + 24 (g^\phantomindex_X - 1)
&\qtext{and}
	\big| \vec{\Gamma}_2 \big| = 6 |D| + 12 (g^\phantomindex_X - 1)
\fullstop
}
Note also that $\big| \vec{\Gamma}_1^\pm \big| = \tfrac{1}{2} \big| \vec{\Gamma}_1 \big| = 6 |D| + 12 (g^\phantomindex_X - 1) = \big| \Gamma_1 \big|$.

\begin{example}{210610184429}
\autoref{210610172502} shows a plot of the Stokes graph of the quadratic differential $\phi_1$ from \autoref{210610162950}.
\autoref{210610182526} shows a more schematic rendering.
\end{example}

\begin{figure}[t]
\centering
\includegraphics[width=\textwidth]{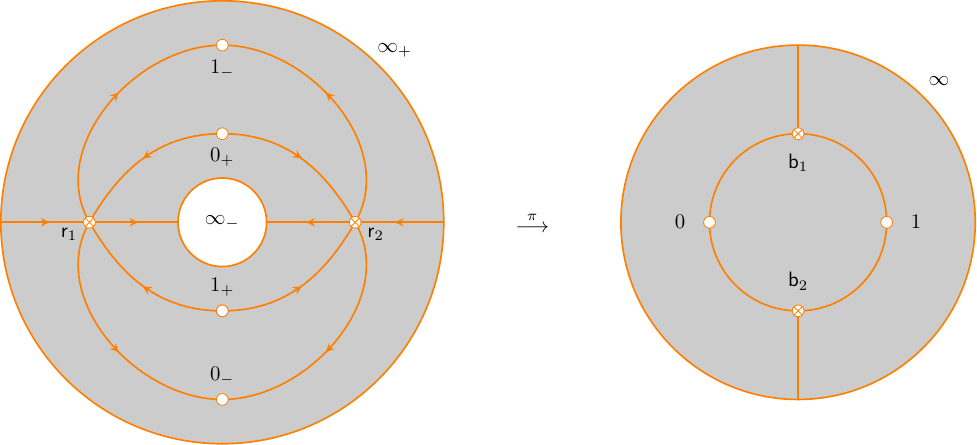}
\caption{\textit{Right:} a schematic picture of the Stokes graph $\Gamma$ (orange) of the quadratic differential $\phi_1$ from \autoref{210610162950}.
The point at infinity has been blown up to the bounding circle drawn in orange.
$\bb_1, \bb_2$ are the branch points.
\textit{Left:} the corresponding spectral graph $\vec{\Gamma}$ on the spectral curve $\sf{\Sigma} \cong \PPP^1$.
The preimages of the points $0,1,\infty$ carry a label according to whether the vertex is a sink or a source.
$\rr_1, \rr_2$ are the ramification points above $\bb_1, \bb_2$, respectively.
}
\label{210610182526}
\end{figure}

\paragraph{The Stokes open cover.}
\label{181114164806}
The graphs $\Gamma, \vec{\Gamma}$ define canonical acyclic open covers (i.e., every finite intersection is either empty or a disjoint union of contractible open sets) of the punctured curves
\eqntag{
	X^\circ \coleq X \setminus (D \cup B)
\qqtext{and}
	\sf{\Sigma}^\circ \coleq \sf{\Sigma} \setminus (C \cup R)
}
by enlarging all edges and faces as follows.
For every face $\II \in \Gamma_2$ and every edge $\alpha \in \Gamma_1$, let $U_{\II}$ and $U_{\alpha}$ be the germs of open neighbourhoods in $X^\circ$ of the face $\II$ and the edge $\alpha$, respectively.
We continue calling them \textit{Stokes regions} and \textit{Stokes rays}.
We define \textit{spectral regions} $U_i$ and \textit{spectral rays} $U_\alpha^\pm$ for all $i \in \vec{\Gamma}_2, \alpha_\pm \in \vec{\Gamma}_1$ in the same way.
We obtain what we call \dfn{Stokes open covers} of $X^\circ$ and $\sf{\Sigma}^\circ$, respectively:
\eqntag{\label{190507154747}
	\frak{U}_\Gamma \coleq \set{ U_{\II} ~\big|~ \II \in \Gamma_2}
\qqtext{and}
	\frak{U}_{\vec{\Gamma}} \coleq \set{ U_{i} ~\big|~ i \in \vec{\Gamma}_2}
\fullstop
}
If $\pp$ is a vertex of $U_\II$, then intersecting $U_\II$ with the infinitesimal disc $U_\pp$ around $\pp$ can be seen as the germ of a sectorial neighbourhood of $\pp$ (or a disjoint union of two).
In fact, the infinitesimal punctured disc $U_\pp^\ast$ centred at $\pp$ is covered by such sectorial neighbourhoods whose double intersections are the Stokes rays incident to $\pp$.

\paragraph{}
Any double intersection $U_\II \cap U_\JJ$ of Stokes regions is either a single Stokes ray or a pair of disjoint Stokes rays with the same polar vertex but necessarily different branch vertices, and there are no nonempty triple intersections.
So we define the \dfn{nerves} of these covers by
\eqntag{\label{181130172043}
	\dot{\frak{U}}_\Gamma \coleq \set{ U_{\alpha} ~\big|~ \alpha \in \Gamma_1}
\qqtext{and}
	\dot{\frak{U}}_{\vec{\Gamma}} \coleq \set{ U_\alpha^+, U_{\alpha}^- ~\big|~ \alpha \in \Gamma_1}
\fullstop
}
We adopt the following notational convention: if $U_{\alpha}$ is a Stokes ray contained in the double intersection $U_\II \cap U_\JJ$, then $U_\II, U_\JJ$ are ordered such that going from $U_\II$ to $U_\JJ$ the Stokes ray $\alpha$ is crossed anti-clockwise around the branch vertex of $U_{\alpha}$.

\paragraph{}
The restriction of the projection $\pi : \sf{\Sigma} \to X$ to any spectral region $U_i$, any spectral ray $U_\alpha^\pm$, or any infinitesimal disc $U_\pp^\pm$ around a pole $\pp_\pm$ is an isomorphism respectively onto its image Stokes region $U_\II = U_{\set{i, i'}}$, Stokes ray $U_\alpha$, or infinitesimal disc $U_\pp$ around the pole $\pp$; we denote these restrictions as follows:
\eqntag{
	\pi_i : U_i \iso U_\II
\qqtext{and}
	\pi_\alpha^\pm : U_\alpha^\pm \iso U_\alpha
\qqtext{and}
	\pi_\pp^\pm : U_\pp^\pm \iso U_\pp
\fullstop
}

\begin{example}{210610184430}
For the differential $\phi_1$ from \autoref{210610162950}, the Stokes open covers of $X^\circ = \PPP^1 \setminus \set{0,1,\infty}$ and $\sf{\Sigma}^\circ = \PPP^1 \setminus \set{ 0_\pm, 1_\pm, \infty_\pm}$ are illustrated in \autoref{210610185958}.
\end{example}

\begin{figure}
\centering
\includegraphics[width=\textwidth]{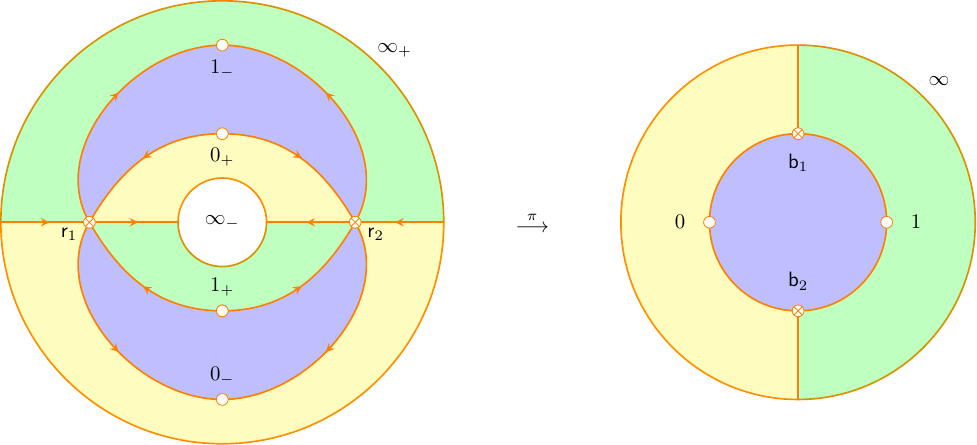}
\caption{The Stokes and spectral regions from \autoref{210610182526} are appropriately coloured to show which pair of spectral regions lie in the preimage of which Stokes region.}
\label{210610185958}
\end{figure}

\subsection{Transverse Connections}
\label{191113202418}

\paragraph{}
If $U_\II$ is a Stokes region with $\II = \set{i, i'}$, denote its polar vertices by $\pp, \pp' \in D$.
Given a connection $(\cal{E}, \nabla) \in \Conn_X^2$, consider its local diagonal decompositions $\cal{E}_\pp \cong \Lambda_\pp^- \oplus \Lambda_\pp^+$ and $\cal{E}_{\pp'} \cong \Lambda_{\pp'}^- \oplus \Lambda_{\pp'}^+$.
Let us analytically continue the flat abelian connection germs $\Lambda_\pp^-, \Lambda_{\pp'}^-$ to $U_\II$ using the flat structure on $\cal{E}$:
\eqntag{\label{190509131455}
\begin{aligned}
	(\Lambda_i, \de_i) &\coleq
	\text{ the unique continuation of $(\Lambda_\pp^-, \de_\pp^-)$ to $U_\II$}
\fullstop{,}
\\	(\Lambda_{i'}, \de_{i'}) &\coleq
	\text{ the unique continuation of $(\Lambda_{\pp'}^-, \de_{\pp'}^-)$ to $U_\II$ }
\fullstop
\end{aligned}
}

\paragraph{Transversality of Levelt filtrations.}
These continuations equip the vector bundle $\cal{E}$ over $U_\II$ with a pair of flat filtrations 
\eqntag{
	\cal{E}_{\pp,\II}^\bullet = \big( \Lambda_i \subset \cal{E}_\II \big)
\qtext{and}
\cal{E}_{\pp',\II}^\bullet = \big( \Lambda_{i'} \subset \cal{E}_\II \big)
\fullstop{,}
}
where $\cal{E}_{\pp,\II}^\bullet, \cal{E}_{\pp',\II}^\bullet$ are the unique continuations to the Stokes region $U_\II$ of the Levelt filtrations $\cal{E}_{\pp}^\bullet = \big( \Lambda_{\pp}^- \subset \cal{E}_\pp \big)$ and $\cal{E}_{\pp'}^\bullet = \big( \Lambda_{\pp'}^- \subset \cal{E}_{\pp'} \big)$, respectively.

\begin{defn}[transversality with respect to $\Gamma$]{181010152516}
We will say that a connection $(\cal{E}, \nabla) \in \Conn_X^2$ is \dfn{transverse} with respect to $\Gamma$ if for every Stokes region $U_{\II}$ the two filtrations $\cal{E}_{\pp,\II}^\bullet, \cal{E}_{\pp',\II}^\bullet$ are transverse: $\cal{E}_{\pp,\II}^\bullet \pitchfork \cal{E}_{\pp',\II}^\bullet$.
\end{defn}

In other words, the two flat line subbundles $\Lambda_i, \Lambda_{i'} \subset \cal{E}_\II$ are required to be distinct.
Such transverse connections form a full subcategory $\Conn_X^2 (\Gamma) \subset \Conn_X^2$.

That such connections exist is not difficult to see once phrased in terms of filtered local systems on $X \setminus D$.
Explicitly, for each Stokes region $U_\II$, choose a point $\xx_\II \in U_\II$ and two loops based at $\xx_\II$, one around each of the two poles {\protect\includegraphics{181117115838}} on the boundary of $U_\II$.
Since $U_\II$ is simply connected, these loops are to be chosen contractible if the corresponding pole is filled in.
Choose a `master' basepoint $\xx_0$ in $X \setminus D$, and for each $\II$, choose a path in $X \setminus D$ that connects $\xx_\II$ to $\xx_0$.
Finally, choose generators for $\pi_1 (X, \xx_0)$ represented by loops that avoid $D$.
Assigning a matrix in $\SL (2, \Complex)$ to each loop, subject to the obvious homotopy conditions, defines a local system on $X \setminus D$ which can always be extended to a logarithmic connection on $(X,D)$.
By appropriately fixing the matrices corresponding to the poles {\protect\includegraphics{181117115838}}, we can ensure that the resulting logarithmic connection has the desired residue data.
Finally, $\Gamma$-transversality cuts out a complement finitely-many algebraic conditions, one for each pair of monodromy matrices assigned to the pair of loops corresponding to each basepoint $\xx_\II$.

In fact, the same argument shows that (with respect to an appropriate topology) the subset of $\Gamma$-transverse connections is open dense.
We do not need these details here, and only mention that these and other moduli-theoretic considerations will be described in great detail in a future publication.

\begin{prop}[Semilocal diagonal decomposition of transverse connections]{181109180145}
If $(\cal{E}, \nabla, \MM) \in \Conn_X^2 (\Gamma)$, then the restriction $\cal{E}_\II \coleq \evat{\cal{E}}{U_{\II}}$ to any Stokes region $U_\II$ has a canonical flat decomposition
\eqntag{
	\cal{E}_\II \iso \Lambda_i \oplus \Lambda_{i'}
\qqtext{with}
	\nabla \simeq \de_i \oplus \de_{i'}
\fullstop{,}
}
where $(\Lambda_i, \de_i)$ and $(\Lambda_{i'}, \de_{i'})$ are defined by \eqref{190509131455}.
Moreover, the $\frak{sl}_2$-structure $\MM$ defines a flat skew-symmetric isomorphism $\MM_\II : \Lambda_i \otimes \Lambda_{i'} \iso \cal{O}_{U_\II}$.
\end{prop}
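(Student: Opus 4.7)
The plan is to construct the decomposition by parallel-transporting the local Levelt summands from each polar vertex across the Stokes region, and then to use the transversality hypothesis and the ambient $\frak{sl}_2$-structure to assemble and equip the direct sum. No step is deep in isolation; the subtlety is to combine them cleanly on the semi-local domain $U_\II$.

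The first task is to justify the existence of the continuations $(\Lambda_i, \de_i), (\Lambda_{i'}, \de_{i'})$ appearing in \eqref{190509131455}. Every Stokes region $U_\II$ is simply connected, being the enlargement of a topological open disc, and each polar vertex $\pp, \pp'$ lies in its boundary; therefore $U_\II$ meets a sectorial neighbourhood of $\pp$ on which the germ $\Lambda_\pp^- \subset \cal{E}_\pp$ provided by \Autoref{181114180038} is realised as an honest flat line subbundle. Parallel transport along $\nabla$ extends this subbundle uniquely to a flat line subbundle $\Lambda_i \subset \cal{E}_\II$, and the restriction of $\nabla$ to it is $\de_i$; the construction at $\pp'$ is identical. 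The transversality hypothesis $\cal{E}_{\pp,\II}^\bullet \pitchfork \cal{E}_{\pp',\II}^\bullet$ says precisely that $\Lambda_i$ and $\Lambda_{i'}$ are distinct rank-one subbundles of the rank-two bundle $\cal{E}_\II$; hence the natural map $\Lambda_i \oplus \Lambda_{i'} \to \cal{E}_\II$ is a pointwise surjection of vector bundles of the same rank, and therefore a global isomorphism over $U_\II$. Because both summands are flat subbundles, this isomorphism intertwines $\de_i \oplus \de_{i'}$ with $\nabla$, yielding the claimed flat decomposition.

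The second task is to produce the skew-symmetric pairing. Taking determinants of the direct-sum decomposition gives a canonical isomorphism $\det \cal{E}_\II \iso \Lambda_i \otimes \Lambda_{i'}$, and composing with the restriction of $\MM$ defines $\MM_\II : \Lambda_i \otimes \Lambda_{i'} \iso \cal{O}_{U_\II}$. Flatness is automatic: the $\frak{sl}_2$-condition $\MM (\tr \nabla) \MM^{-1} = \dd$ translates, under the block-diagonal form of $\nabla$, into the statement that $\MM_\II$ intertwines the tensor-product connection $\de_i \otimes \bbid + \bbid \otimes \de_{i'}$ on $\Lambda_i \otimes \Lambda_{i'}$ with the trivial connection on $\cal{O}_{U_\II}$; skew-symmetry reflects the sign $-1$ picked up by swapping factors in the exterior square $\wedge^2 \cal{E}_\II \cong \det \cal{E}_\II$. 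The only point requiring care is the well-definedness of the parallel-transport continuation, which rests on the simple connectedness of each Stokes region guaranteed by the saddle-free assumption on $\phi$; beyond this, the argument is routine bookkeeping with flat line subbundles and determinants.
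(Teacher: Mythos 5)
Your proposal is correct and follows essentially the same route as the paper, which treats this proposition as an immediate consequence of the construction \eqref{190509131455}: the continuations exist by flatness over the simply connected Stokes region, transversality makes the two flat line subbundles pointwise distinct so that $\Lambda_i \oplus \Lambda_{i'} \to \cal{E}_\II$ is an isomorphism, and $\MM_\II$ is obtained from $\MM$ via the induced identification $\det \cal{E}_\II \iso \Lambda_i \otimes \Lambda_{i'}$. Your added remarks on flatness and skew-symmetry of $\MM_\II$ are consistent with the conventions established in \Autoref{181114180038}.
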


The main construction in this paper (\Autoref{191115100309}) is an equivalence between $\Conn_X^2 (\Gamma)$ and a certain category of odd abelian connections on the spectral curve $\sf{\Sigma}$.

\begin{figure}
\begin{minipage}[c]{0.47\textwidth}
\centering
\includegraphics{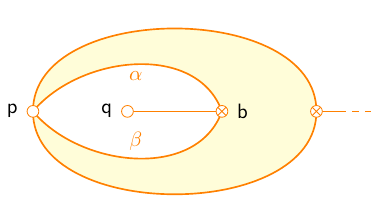}
\end{minipage}\hfill
\begin{minipage}[c]{0.5\textwidth}
\caption{A Stokes region $U_\II$ whose polar vertices coincide.
The subset of $X$ bounded by the Stokes rays $\alpha, \beta$ in the complement of $U_\II$ must contain another point $\qq \in D$, for otherwise all Stokes rays incident to the branch point $\bb$ are also incident to $\pp$.
But then the complement of $\Gamma$ has a connected component which is not a horizontal strip contradicting \cite[Lemma 3.1]{MR3349833}.
Generically, the monodromy of $\nabla$ around the pole $\qq$ does not preserve the Levelt filtration coming from $\pp$.
}
\label{181128142416}
\end{minipage}
\end{figure}

\paragraph{Transversality over Stokes rays.}
Suppose $U_\alpha$ is a Stokes ray contained in the double intersection $U_\II \cap U_\JJ$ of two adjacent Stokes regions.
Then $\cal{E}$ has two diagonal decompositions over $U_{\alpha}$:
\eqntag{\label{190514173401}
	\cal{E}_\II \iso \Lambda_i \oplus \Lambda_{i'}
\fullstop{,}
\qqquad
	\cal{E}_\JJ \iso \Lambda_j \oplus \Lambda_{j'}
\fullstop
}
Let $\pp' \in D$ be the common polar vertex of $U_\II, U_\JJ$.
Then $\Lambda_{i'}, \Lambda_{j'}$ are continuations of the same line bundle germ $\Lambda_{\pp'}^- \subset \cal{E}_\pp$, so $\Lambda_{i'} = \Lambda_{j'}$ over the Stokes ray $U_{\alpha}$.
With respect to this pair of decompositions, the identity map on $\cal{E}$ has the following upper-triangular expression, which will be exploited throughout our construction in this paper:
\eqntag{\label{190514172603}
	\evat{\Big( \cal{E}_\II \overset{\id}{=\joinrel=} \cal{E}_\JJ \Big)}{U_\alpha}
		=
	\mtx{1 & \Delta_\alpha \\ 0 & g_\alpha}
		:
	\begin{tikzcd}[ampersand replacement=\&, row sep = tiny, baseline=-2.5pt]
			\Lambda_{i'}
				\ar[d, "\oplus" description]
				\ar[r, equal, shorten >=-2.5pt, shorten <=-5pt, "1"]
	\&		\Lambda_{j'}
				\ar[d, "\oplus" description]
\\			\Lambda_i
				\ar[r, shorten >=-2.5pt, shorten <=-5pt, "g_\alpha"']
				\ar[ur, shorten >=-2.5pt, shorten <=-5pt, "\Delta_\alpha" description]
	\&		\Lambda_j
\end{tikzcd}
\fullstop
}

\begin{rem}{190509134710}
Note that in the definition of transversality with respect to $\Gamma$, it is not required that the two polar vertices $\pp, \pp'$ of $U_\II$ be different.
If $\pp = \pp'$ it may seem that no connection $\nabla$ can be transverse with respect to $\Gamma$ for such a Stokes graph, but this is not the case.
This is because the Stokes region $U_\II$ defines two disjoint sectorial neighbourhoods of $\pp$, so the two analytic continuations $\Lambda_i, \Lambda_{i'} \subset \cal{E}_\II$ of the same germ $\Lambda_\pp^-$ are generically not the same, as explained in \autoref{181128142416}.
\end{rem}

\section{Abelianisation}

\paragraph{}
As before, let $(X,D)$ be a smooth compact curve equipped with a nonempty set of marked points $D$ such that $|D| > 2 - 2g_X$.
Suppose $D$ is decorated with very generic residue data $a$ in the sense of \autoref{181118161351} and \autoref{191115181047}.
We are studying the category of logarithmic $\frak{sl}_2$-connections on $(X,D)$ with residue data $a$:
\eqntag{
	\Conn_X^2 = \Conn^2_{\frak{sl}} (X,D; a)
\fullstop
}

Our method is to choose a generic saddle-free quadratic differential $\phi$ on $(X,D)$ with residues $a$.
Let $\pi : \sf{\Sigma} \to X$ be the spectral curve of $\phi$, and let $\Gamma$ be the corresponding Stokes graph on $X$.
Consider the subcategory of connections that are transverse with respect to $\Gamma$ in the sense of \autoref{181010152516}:
\eqntag{
	\Conn_X^2 (\Gamma) \subset \Conn_X^2
\fullstop
}

\paragraph{}
The main result of this paper is that $\Conn_X^2 (\Gamma)$ is equivalent to a category of odd abelian connections on the spectral curve $\sf{\Sigma}$ as follows.
For every $\pp \in D$, let $\pm \lambda_\pp \in \Complex$ be the Levelt exponents of the residue data $a$ at $\pp$ (arranged such that $\Re (\lambda_\pp) > 0$).
Put $C \coleq \pi^{-1} (D)$, let $C_\pm$ be as in \autoref{200603101941}, let $R \subset \sf{\Sigma}$ be the ramification divisor of $\pi$, and define abelian residue data along $C \cup R$ as follows:
\eqntag{\label{181123200246}
	\underline{\lambda}
		\coleq \set{ \pm \lambda_\pp ~\big|~ \pp_\pm \in C_\pm }
			\cup \set{ -1/2 ~\big|~ \rr \in R}
\fullstop
}
Consider the category of odd abelian logarithmic connections on $(\sf{\Sigma}, C \cup R)$ with residues $\underline{\lambda}$, for which we use the following shorthand notation:
\eqntag{
	\Conn_\sf{\Sigma}^1 \coleq \Conn^1_\textup{odd} (\sf{\Sigma}, C \cup R; \underline{\lambda})
\fullstop
}

\begin{thm}[\bfseries abelianisation of logarithmic {$\frak{sl}_2$}-connections]{191115100309}
\leavevmode\\
There is a natural equivalence of categories $\Conn_X^2 (\Gamma) \cong \Conn_\sf{\Sigma}^1$. 
\end{thm}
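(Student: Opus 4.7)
The plan is to construct a pair of functors $\pi^\ab_\Gamma : \Conn_X(\Gamma) \to \Conn_\sf{\Sigma}$ and $\pi_\ab^\Gamma : \Conn_\sf{\Sigma} \to \Conn_X(\Gamma)$, then show they are mutually quasi-inverse. The abelianisation functor $\pi^\ab_\Gamma$ glues the locally extracted Levelt data into an odd abelian connection on $\sf{\Sigma}$, while the nonabelianisation functor $\pi_\ab^\Gamma$ is obtained by deforming the pushforward $\pi_\ast$ by a canonical cocycle $\Voros$ with values in automorphisms of $\pi_\ast$, designed so that the resulting connection lands in $\Conn_X(\Gamma)$ rather than in $\Conn^2_{\frak{sl}}(X, B \cup D)$.

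For the construction of $\pi^\ab_\Gamma$, given $(\cal{E}, \nabla, \MM) \in \Conn_X(\Gamma)$, I would apply \Autoref{181109180145} to write, on each Stokes region $U_\II$ with $\II = \set{i, i'}$, the canonical flat decomposition $\cal{E}_\II \iso \Lambda_i \oplus \Lambda_{i'}$ together with its skew pairing $\MM_\II$. Since $\pi$ restricts to a biholomorphism $\pi_i : U_i \iso U_\II$, pulling the pair $(\Lambda_i, \de_i)$ back through $\pi_i^{-1}$ defines an abelian connection on each spectral region $U_i$. On a spectral ray $U_\alpha^\pm \subset U_i \cap U_j$, equation \eqref{190514172603} shows that the two candidate restrictions are analytic continuations of the same Levelt germ along the common polar vertex, and hence differ by a unique nowhere-vanishing scalar $g_\alpha$. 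The resulting cocycle on $\dot{\frak{U}}_{\vec{\Gamma}}$ produces a flat line bundle $(\cal{L}, \de)$ on $\sf{\Sigma}^\circ$. The required logarithmic extensions across $C$, with residues $\pm \lambda_\pp$, and across $R$, with residue $-1/2$, follow from \Autoref{181114180038} and from comparison with the canonical connection on $\cal{O}_\sf{\Sigma}(R)$. Finally, the local pairings $\MM_\II$ assemble into an odd structure $\mu : \cal{L} \otimes \sigma^\ast \cal{L} \iso \cal{O}_\sf{\Sigma}(R)$ consistent with \Autoref{181116155648}, yielding an object of $\Conn_\sf{\Sigma}$.

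For the construction of $\pi_\ab^\Gamma$, given $(\cal{L}, \de, \mu) \in \Conn_\sf{\Sigma}$, \Autoref{181116135914} provides the naive pushforward $\pi_\ast(\cal{L}, \de)$ in $\Conn^2_{\frak{sl}}(X, B \cup D)$. The essential new ingredient is the construction, on each Stokes ray $U_\alpha$, of an automorphism $\Voros_\alpha$ of $\pi_\ast \cal{L}$ which is unipotent in the sheet-wise direct sum decomposition and whose off-diagonal entry is given by the abelian parallel transport of $\de$ along a canonical detour path on $\sf{\Sigma}$ passing through the ramification point above the branch vertex of $\alpha$. The residue $-1/2$ at $R$ ensures that this transport is finite despite the logarithmic singularity. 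Regluing the restrictions of $\pi_\ast(\cal{L}, \de)$ over the Stokes cover $\frak{U}_\Gamma$ by the cocycle $\Voros = \set{\Voros_\alpha}$ produces a new rank-two connection $\pi_\ab^\Gamma(\cal{L}, \de)$ on $X$. The crucial local check is that the ordered product of the three $\Voros_\alpha$ accumulating around each branch point $\bb \in B$ exactly cancels the quasi-permutation monodromy of $\pi_\ast \de$ at $\bb$, so that the deformed object extends holomorphically across $B$ and has logarithmic poles only along $D$. Transversality with respect to $\Gamma$ is automatic because the Levelt decompositions inside each Stokes region are precisely the sheet-wise direct sums, and the odd structure $\mu$ induces the required $\frak{sl}_2$-trivialisation through the norm isomorphism used in the proof of \Autoref{181116135914}.

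To check that $\pi^\ab_\Gamma$ and $\pi_\ab^\Gamma$ are mutually quasi-inverse, I would verify both compositions locally on each element of the Stokes covers. The composition $\pi^\ab_\Gamma \circ \pi_\ab^\Gamma$ reduces on each spectral region $U_i$ to the identity because the Levelt filtration extracted from a sheet-wise direct sum recovers each summand tautologically; the composition $\pi_\ab^\Gamma \circ \pi^\ab_\Gamma$ reduces on each Stokes ray $U_\alpha$ to the assertion that the off-diagonal entries $\Delta_\alpha$ appearing in the change-of-Levelt-decomposition formula \eqref{190514172603} satisfy precisely the functional equation defining $\Voros_\alpha$, so the original gluing of $\cal{E}$ is reproduced. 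Naturality in morphisms is inherited from the canonical character of both the Levelt decomposition and $\Voros$, the latter depending only on $\Gamma$ and not on the particular $\cal{L}$. I expect the principal technical obstacle to be the construction of $\Voros$ itself: one must verify that the detour-path transports assemble into a genuine \v{C}ech cocycle on $\dot{\frak{U}}_\Gamma$ with values in automorphisms of $\pi_\ast$, and that the monodromy balance at every $\bb \in B$ holds exactly. Once this is in place, the remaining verifications become essentially bookkeeping built on the semi-local Levelt decomposition of \Autoref{181109180145}.
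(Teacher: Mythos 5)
Your proposal follows essentially the same route as the paper: extract and glue Levelt data via the semi-local decomposition of \Autoref{181109180145} to build $\pi^\ab_\Gamma$, deform $\pi_\ast$ by the unipotent Voros cocycle whose off-diagonal entries are abelian transports along the canonical detour paths to build $\pi_\ab^\Gamma$, verify triviality of the monodromy around each branch point, and check the two compositions locally on the Stokes covers. One small correction: the detour path goes \emph{around} (not through) the ramification point, so finiteness of the transport is not at issue; the residue $-1/2$ instead enters through the monodromy $-1$ of $\de$ around $R$, which is exactly what makes the three-fold product of Voros automorphisms cancel the quasi-permutation monodromy at each branch point.
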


Expressed more explicitly, this equivalence is
\eqn{
\begin{tikzcd}[ampersand replacement = \&]
	\begin{Bmatrix}
			\text{$(\cal{E}, \nabla, \MM)$}
		\\	\text{logarithmic $\Gamma$-transverse}
		\\	\text{$\frak{sl}_2$-connections on $(X,D)$}
		\\	\text{with generic Levelt exponents}
		\\  \text{$\set{\pm \lambda_\pp ~\big|~ \pp \in D}$}
	\end{Bmatrix}
					\ar[r, phantom, "\cong" description]
		\&		
	\begin{Bmatrix}
			\text{$(\cal{L}, \de, \mu)$}
		\\	\text{odd logarithmic abelian}
		\\	\text{connections on $(\sf{\Sigma}, C \cup R)$}
		\\	\text{with residues $\begin{cases} -\tfrac{1}{2} \text{ along $R$} \\ \pm \lambda_\pp \text{ at $\pp_\pm \in C^\pm$}\end{cases}$}
	\end{Bmatrix}
\end{tikzcd}
\fullstop
}

We will prove this theorem by constructing a pair functors, 
\eqntag{
\begin{tikzcd}[ampersand replacement = \&]
				\Conn_X^2 (\Gamma)
					\ar[r, shift left, "\pi^\ab_\Gamma"]
		\&		\Conn_\sf{\Sigma}^1
					\ar[l, shift left, "\pi_\ab^\Gamma"]
\end{tikzcd}
\fullstop
}
called \textit{abelianisation} and \textit{nonabelianisation} with respect to $\Gamma$; they are constructed in \autoref{181127212951} and \autoref{181130123445}, respectively.
In \Autoref{180508230111}, we prove that they form an equivalence of categories.

\subsection{The Abelianisation Functor}
\label{181127212951}

In this subsection, given an $\frak{sl}_2$-connection $(\cal{E}, \nabla, \MM) \in \Conn_X^2 (\Gamma)$, we construct an abelian connection $(\cal{L}, \de, \mu) \in \Conn_\sf{\Sigma}^1$, and show that this construction is functorial.
The idea is to extract the diagonal decompositions of $\cal{E}$ at the poles of $\nabla$, analytically continue them to the spectral regions on the spectral curve, and then glue them into a flat line bundle using canonical isomorphisms that arise due to transversality.

\paragraph{Definition at the poles.}
Given $\pp \in D$, consider the local diagonal decomposition $\cal{E}_\pp \iso \Lambda_\pp^- \oplus \Lambda_\pp^+$ from \Autoref{181114180038}.
We define $(\cal{L}, \de)$ over the infinitesimal disc $U_\pp^\pm$ around $\pp_\pm$ to be the pullback of the connection germ $(\Lambda_\pp^\pm, \de_\pp^\pm)$:
\eqntag{\label{190525201542}
	(\cal{L}_{\pp}^\pm, \de_{\pp}^\pm)
		\coleq (\pi_\pp^\pm)^\ast \big( \Lambda_\pp^\pm, \de_\pp^\pm \big)
\fullstop
}
Thus, $(\cal{L}_{\pp}^\pm, \de_{\pp}^\pm)$ is the germ of a logarithmic abelian connection at $\pp_\pm$ with residue $\pm \lambda_\pp$.
It also follows that $(\pi_\pp^\mp)^\ast \Lambda^\pm_\pp = \sigma^\ast \cal{L}_\pp^\pm$, so the pullback of the flat skew-symmetric isomorphism $\MM_\pp : \Lambda_\pp^- \otimes \Lambda_\pp^+ \iso \cal{O}_{X, \pp}$ to the disc $U_\pp^\pm$ defines a flat skew-symmetric isomorphism
\eqntag{\label{190525201538}
	\mu_\pp^\pm \coleq (\pi_\pp^\pm)^\ast \MM_\pp :
	\cal{L}_\pp^\pm \oplus \sigma^\ast \cal{L}_\pp^\mp \iso \cal{O}_{U_\pp^\pm}
\fullstop
}

\paragraph{Definition on spectral regions.}
Let $U_i \subset \sf{\Sigma}$ be a spectral region, and let $\pp_-$ be its sink vertex.
We define $(\cal{L}, \de)$ by uniquely continuing the germ $\cal{L}_\pp^-$ using the flat structure on $\pi^\ast \cal{E}$:
\eqntag{
	(\cal{L}_i, \de_i) \coleq
	\text{ the unique continuation of $(\cal{L}_\pp^-, \de_\pp^-)$ to $U_i$}
\fullstop
}
Evidently, $(\cal{L}_i, \de_i) \coleq \pi_i^\ast (\Lambda_i, \de_i)$ for $\Lambda_i$ defined by \eqref{190509131455}.
Furthermore, if $U_{i'} = \sigma (U_i)$, then $\pi^\ast_i \Lambda_{i'} = \sigma^\ast \cal{L}_{i'}$ for $\Lambda_{i'}$ defined by \eqref{190509131455}.
So if $\II = \set{i,i'}$, the pullback to $U_i$ of the $\frak{sl}_2$-structure $\MM_\II$ from \Autoref{181109180145} defines a flat skew-symmetric isomorphism
\eqntag{\label{190525201957}
	\mu_i \coleq \pi^\ast_i \MM_\II : \cal{L}_i \otimes \sigma^\ast \cal{L}_{i'} \iso \cal{O}_{U_i}
\fullstop
}

\paragraph{Gluing over spectral rays.}
For every $\alpha \in \Gamma_1$, consider the pair of opposite spectral rays $\alpha_\pm \in \vec{\Gamma}_1^\pm$, and let $\pp_\pm \in C^\pm$ be their respective polar vertices.
Let $U_\II = U_{\set{i,i'}}, U_\JJ = U_{\set{j,j'}} \subset X$ be the pair of adjacent Stokes regions which intersect along the Stokes ray $U_\alpha$ as described in \autoref{181112160949}.
\begin{figure}[t]
\begin{minipage}[c]{0.6\textwidth}
\centering
\includegraphics{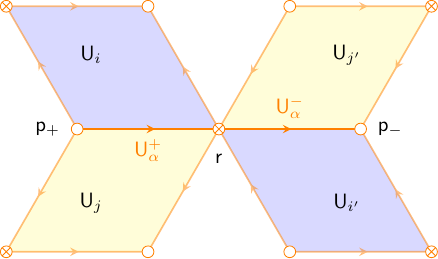}
\end{minipage}\hfill
\begin{minipage}[c]{0.37\textwidth}
\caption{%
$U_\pp^\pm$ is a pair of opposite spectral rays,
$\rr$ is their common ramification vertex, and $\pp_\pm$ are their respective polar vertices.
$U_i, U_j$ are a pair of oriented Stokes regions which have $U_\alpha^+$ in their intersection, arranged such that the ordered pair $(U_i, U_j)$ respects the cyclic anti-clockwise order around $\rr$.
Let $U_{i'} \coleq \sigma(U_i), U_{j'} \coleq \sigma(U_j)$, so $U_{\alpha}^-$ is a connected component of $U_{i'} \cap U_{j'}$.}
\label{181112160949}
\end{minipage}
\end{figure}
By transversality with respect to $\Gamma$, the vector bundle $\cal{E}$ has two diagonal decompositions over the Stokes ray $U_\alpha$:
\eqntag{\label{190514173402}
	\cal{E}_\II \iso \Lambda_i \oplus \Lambda_{i'}
\fullstop{,}
\qqquad
	\cal{E}_\JJ \iso \Lambda_j \oplus \Lambda_{j'}
\fullstop
}
Then $\Lambda_{i'}, \Lambda_{j'}$ are continuations of the same line bundle germ $\Lambda_{\pp'}^- \subset \cal{E}_\pp$, so $\Lambda_{i'} = \Lambda_{j'}$ over the Stokes ray $U_{\alpha}$.
The identity map on $\cal{E}$, written with respect to this pair of decompositions, is the upper triangular matrix \eqref{190514172603}.
We therefore define
\eqntag{\label{190523180730}
\begin{aligned}
	\Big( g_\alpha^- : \cal{L}_{i'} \iso \cal{L}_{j'} \Big)
	&\coleq (\pi_\alpha^-)^\ast \Big( 1 : \Lambda_{i'} =\joinrel= \Lambda_{j'} \Big)
\fullstop{,}
\\
	\Big( g_\alpha^+ : \cal{L}_{i} \iso \cal{L}_{j} \Big)
	&\coleq (\pi_\alpha^+)^\ast \Big( g_\alpha : \Lambda_i \iso \Lambda_j \Big)
\fullstop
\end{aligned}
}
The upper-triangular form \eqref{190514172603} of the identity map on $\cal{E}$ also implies that the gluing maps $g_\alpha^-, g_\alpha^+$ intertwine the pullbacks $\mu_i, \mu_j$ and $\mu_{i'}, \mu_{j'}$, respectively.

\paragraph{Gluing near the poles.}
For every $\pp \in D$, let $U_\pp^\pm \subset \sf{\Sigma}$ be the infinitesimal disc neighbourhoods of $\pp_\pm$.
Consider a Stokes region $U_\II = U_{\set{i,i'}}$ such that $U_i$ is incident to $\pp_+$ and $U_{i'}$ is incident to $\pp_-$.
First, the intersection of $U_{i'}$ with $U_\pp^-$ is a sectorial neighbourhood of $\pp_-$, and the line bundle $\cal{L}_{i'}$ is the unique continuation of the germ $\cal{L}_\pp^-$, so identity is the gluing map here.
On the other hand, the intersection of $U_{i}$ with $U_\pp^+$ is a sectorial neighbourhood of $\pp_+$, over which by \Autoref{181114180038} and \Autoref{181109180145} we have two decompositions $\cal{E}_\pp \iso \Lambda_\pp^- \oplus \Lambda_\pp^+$ and $\cal{E}_\II \iso \Lambda_{i} \oplus \Lambda_{i'}$.
Then the obvious isomorphism $\cal{E}_\pp \iso \cal{E}_\II$ over this double intersection implies
\eqntag{\label{190510112117}
	\Lambda_\pp^+
		\iso \big( \Lambda_\pp^- \oplus \Lambda_\pp^+ \big) \big/ \Lambda_\pp^-
		\iso \cal{E}_\pp \big/ \Lambda_\pp^-
		\iso \cal{E}_\II \big/ \Lambda_{i'}
		\iso \big( \Lambda_{i} \oplus \Lambda_{i'} \big) \big/ \Lambda_{i'}
		\iso \Lambda_i
\fullstop
}
The pullback of this map is the desired gluing map $\cal{L}_{\pp}^+ \iso \cal{L}_i$.
These gluing maps satisfy the cocycle condition, because if $U_i$ and $U_j$ are two adjacent spectral regions incident to $\pp_+$, then the identity map $\cal{E}_\II \iso \cal{E}_\JJ$ over the intersection of Stokes regions $U_\II = U_{\set{i,i'}}$ and $U_{\JJ} = U_{\set{j,j'}}$ has the upper-triangular form \eqref{190514172603}, and therefore induces an isomorphism $\cal{E}_\II \big/ \Lambda_{i'} \iso \cal{E}_\JJ \big/ \Lambda_{j'}$.
The isomorphism $\Lambda_i \oplus \Lambda_{i'} \iso \Lambda_\pp^- \oplus \Lambda_\pp^+$ given by the identity on $\cal{E}$ is unipotent.
So its determinant $\Lambda_i \otimes \Lambda_{i'} \iso \Lambda_\pp^- \otimes \Lambda_\pp^+$ intertwines $\MM_\II$ and $\MM_\pp$, and therefore also $\mu_i$ and $\mu_\pp^+$ as well as $\mu_{i'}$ and $\mu_\pp^-$.

\paragraph{Extension over ramification.}
This completes the construction of $(\cal{L}, \de, \mu)$ on the spectral curve $\sf{\Sigma}$ away from the ramification divisor $R$.
Deligne's construction \cite[pp.91-96]{MR0417174} gives an extension over $R$ with logarithmic poles and residues $-1/2$, and it is easy to check that for any such extension, $\mu$ extends uniquely to an odd structure.
Deligne extensions are unique only up to a unique isomorphism (see also \cite[Theorem IV.4.4]{Borel1987}), but it is possible to fix this ambiguity as follows (details are not important for us here and will appear elsewhere).
If $\rr \in R$ is any ramification point and $\bb = \pi (\rr)$ is the corresponding branch point, let $U_\II, U_\JJ, U_\KK$ be the three Stokes regions incident to $\bb$.
Then the germ $\cal{L}_r$ of $\cal{L}$ at $\rr$ is the pullback of the line bundle germ $\Lambda_\bb$ at $\bb$ which is defined as the kernel of the canonical map $\Lambda_\II \oplus \Lambda_\JJ \oplus \Lambda_\KK \too \cal{E}_\bb$.
As a result, we obtain an abelian connection $(\cal{L}, \de, \mu) \in \Conn_\sf{\Sigma}^1$.

\paragraph{}
Finally, functoriality of our construction readily follows from the fact that morphisms of connections necessarily preserve diagonal decompositions.

\begin{prop}{181109132636}
The assignment $(\cal{E}, \nabla, \MM) \mapsto (\cal{L}, \de, \mu)$ extends to a functor
\eqntag{
	\pi^\ab_\Gamma : \Conn_X^2 (\Gamma) \too \Conn_\sf{\Sigma}^1
\fullstop
}
\end{prop}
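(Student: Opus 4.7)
The data $(\cal{L}, \de, \mu)$ has already been spelled out explicitly in the preceding paragraphs, so my plan is in three parts: first, confirm that this data assembles to a bona fide object of $\Conn_\sf{\Sigma}$; second, define $\pi^\ab_\Gamma$ on morphisms; third, verify that the resulting assignment is a functor.

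For the object part, I observe that the cover of $\sf{\Sigma}$ by spectral regions $U_i$ together with the infinitesimal discs $U_\pp^\pm$ has only twofold nonempty overlaps (spectral rays and sectorial neighbourhoods of poles), so no nontrivial triple cocycle condition arises. The transition maps of \eqref{190523180730} and \eqref{190510112117} are flat with respect to $\de$ because they are pullbacks of restrictions of the identity on $(\cal{E}, \nabla)$; the residues at $\pp_\pm$ come out to $\pm \lambda_\pp$ because $\pi_\pp^\pm$ is an isomorphism of discs and pullback by an isomorphism preserves residues of logarithmic connections; and the local odd structures $\mu_i, \mu_\pp^\pm$ are compatible with the gluings because they are pulled back from the $\frak{sl}_2$-trivialisations $\MM_\II, \MM_\pp$, whose own compatibilities are part of \Autoref{181114180038} and \Autoref{181109180145}. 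The Deligne extension across $R$ then supplies the logarithmic structure with residue $-1/2$, and $\mu$ extends uniquely, so $(\cal{L}, \de, \mu) \in \Conn_\sf{\Sigma}$.

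For the morphism part, let $\varphi : (\cal{E}, \nabla, \MM) \to (\cal{E}', \nabla', \MM')$ be a morphism in $\Conn_X(\Gamma)$. Since any such $\varphi$ preserves Levelt filtrations (as recorded at the end of \autoref{181118210908}), the germ $\varphi_\pp$ restricts to a morphism $\Lambda_\pp^- \to {\Lambda'}^-_\pp$ and descends, via the canonical identification $\Lambda_\pp^+ \iso \cal{E}_\pp/\Lambda_\pp^-$, to a morphism $\Lambda_\pp^+ \to {\Lambda'}^+_\pp$. Pulling these back by $\pi_\pp^\pm$ defines the germs of $\pi^\ab_\Gamma(\varphi)$ on $U_\pp^\pm$. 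Flatness of $\varphi$ then allows unique extension along $\pi^\ast \nabla$ across each spectral region $U_i$, producing morphisms $\cal{L}_i \to \cal{L}'_i$. At each ramification point, functoriality of the kernel characterisation of $\cal{L}_\rr$ recalled in the text extends the morphism across $R$.

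The step that will require the most care, and which I expect to be the main obstacle, is compatibility of these germ morphisms with the transitions $g_\alpha^\pm$ over spectral rays. The key observation is that, over a Stokes ray $U_\alpha$, transversality with respect to $\Gamma$ combined with filtration-preservation forces $\varphi$ to be simultaneously upper-triangular in both decompositions of \eqref{190514173401}; the identity $\cal{E}_\II = \cal{E}_\JJ$ then has the triangular form \eqref{190514172603} for both $\cal{E}$ and $\cal{E}'$, and because $\varphi$ intertwines them, its diagonal blocks intertwine $g_\alpha^\pm$ with ${g'}^\pm_\alpha$. The same argument applied to induced maps on quotients handles the sectorial overlaps near $\pp_+$, and compatibility with the odd structures is immediate because $\varphi$ intertwines $\MM$ and $\MM'$ by hypothesis. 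Preservation of identities and composition is automatic since each ingredient (pullback, restriction, flat continuation, quotient, Deligne extension) is itself functorial.
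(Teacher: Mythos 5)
Your proposal takes the same route as the paper: the object-level assignment is the construction already carried out in the text, and functoriality is reduced to the fact that morphisms of connections necessarily preserve Levelt filtrations (the paper compresses your entire morphism discussion into a single sentence to this effect). Your mechanism for checking compatibility of the induced morphisms with the gluings over spectral rays --- diagonality of $\varphi$ with respect to the semi-local Levelt decompositions combined with the triangular form \eqref{190514172603} of the identity --- is exactly the right one, and is the content the paper leaves implicit.

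One assertion in your object-level check is false as stated: the cover of $\sf{\Sigma} \setminus R$ by the spectral regions $U_i$ together with the infinitesimal discs $U_\pp^\pm$ \emph{does} have nonempty triple overlaps. If $U_i, U_j$ are adjacent spectral regions both incident to a source $\pp_+$, the positive spectral ray along which they meet has $\pp_+$ as its polar vertex, so $U_i \cap U_j \cap U_\pp^+ \neq \emptyset$, and one must check that the gluings $\cal{L}_\pp^+ \iso \cal{L}_i$, $\cal{L}_\pp^+ \iso \cal{L}_j$ of \eqref{190510112117} and $g_\alpha^+ : \cal{L}_i \iso \cal{L}_j$ commute. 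This is precisely the cocycle condition the paper verifies: the identity $\cal{E}_\II \iso \cal{E}_\JJ$ is upper-triangular by \eqref{190514172603} and therefore induces $\cal{E}_\II / \Lambda_{i'} \iso \cal{E}_\JJ / \Lambda_{j'}$ compatibly with the quotient description of $\Lambda_\pp^+$. (At sink poles all three transition maps are identities, so nothing needs checking there.) Since you already deploy exactly this quotient argument for the morphism compatibility near $\pp_+$, the repair is immediate; but the claim that no triple cocycle condition arises should be withdrawn.
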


We call $\pi^\ab_\Gamma$ the \dfn{abelianisation functor}, and the image $(\cal{L}, \de, \mu)$ of $(\cal{E}, \nabla, \MM)$ under $\pi^\ab_\Gamma$ the \dfn{abelianisation} of $(\cal{E}, \nabla, \MM)$ with respect to $\Gamma$.
The following proposition summarises some properties of abelianisation all of which are immediate consequences of the construction.

\begin{prop}[\textbf{properties of abelianisation}]{181126145627}
Let $(\cal{E}, \nabla, \MM) \in \Conn_X^2 (\Gamma)$, and let $(\cal{L}, \de, \mu) \in \Conn_\sf{\Sigma}^1$ be its abelianisation.
\begin{enumerate}
\item \label{181127214531}
The degree of $\cal{L}$ is 
\eqn{
	\deg (\cal{L}) = - \tfrac{1}{2} |R| = - \deg (\pi_\ast \cal{O}_\sf{\Sigma})
\fullstop
}
\item \label{181127214549}
For any $\pp \in D$, let $U_\pp$ be the infinitesimal disc around $\pp$.
Let $\cal{E}_\pp \iso \Lambda_\pp^- \oplus \Lambda_\pp^+$ be the local diagonal decomposition (\Autoref{181114180038}).
Then there is a canonical flat isomorphism
\eqn{
	\pi_\ast \cal{L}_\pp = \evat{\pi_\ast \cal{L}}{U_\pp} \iso \Lambda_\pp^- \oplus \Lambda_\pp^+ \iso \cal{E}_\pp
\fullstop
}
\item \label{191115160512}
Let $U_\pp^\pm$ be the infinitesimal disc around the preimage $\pp_\pm \in C$ of $\pp$.
Recall the notation $\pi_\pp^\pm \coleq \evat{\pi}{U_\pp^\pm} : U_\pp^\pm \iso U_\pp$.
Then there are canonical flat isomorphisms
\eqn{
	\cal{L}_{\pp_\pm} = \evat{\cal{L}}{U_\pp^\pm} \iso (\pi^\pm_\pp)^\ast \Lambda_\pp^\pm \eqcol \cal{L}_\pp^\pm
\fullstop
}
\end{enumerate}

\begin{enumerate}
\setcounter{enumi}{3}
\item \label{181127214543}
Let $U_\II \subset X$ be a Stokes region with polar vertices $\pp, \pp' \in D$, and let $\cal{E}_\II \iso \Lambda_i \oplus \Lambda_{i'}$ be the semilocal diagonal decomposition of $\cal{E}$ over $U_\II$ (\Autoref{181109180145}), where $\Lambda_i, \Lambda_{i'}$ are as in \eqref{190509131455}.
Then there is a canonical flat isomorphism
\eqn{
	\evat{\pi_\ast \cal{L}}{U_\II} \iso \Lambda_i \oplus \Lambda_{i'} \iso \cal{E}_\II
\fullstop
}
\item \label{181127214735}
Let $U_i, U_{i'}$ be the spectral regions above $U_{\II}$ incident to $\pp_-, \pp'_- \in C$, respectively, and recall the notation $\pi_i \coleq \evat{\pi}{U_i} : U_i \iso U_\II$.
Then there are canonical flat isomorphisms
\eqn{
	\cal{L}_i = \evat{\cal{L}}{U_i} \iso \pi^\ast_i \Lambda_i
\qqtext{and}
	\cal{L}_{i'} = \evat{\cal{L}}{U_{i'}} \iso \pi^\ast_{i'} \Lambda_{i'}
\fullstop
}
\item \label{191115171247}
Finally, recall that $\eta$ is the canonical one-form on the spectral curve $\sf{\Sigma}$.
The abelian connection $\de - \eta$ on the abelianisation line bundle $\cal{L}$ is holomorphic along $C$; it has logarithmic poles only along the ramification divisor $R$ where it has residues $-1/2$.
\end{enumerate}
\end{prop}

The following proposition, which readily follows from the discussion in \Autoref{191115171958}, expresses the sense in which the abelianisation of connections is the analogue of abelianisation of Higgs bundles.

\begin{prop}[\textbf{spectral properties of abelianisation}]{191115181734}
For any simply connected open subset $U \subset \sf{\Sigma} \setminus R$, the abelianisation line bundle $\cal{L}$ has a generator $e$ which is an eigensection for $\de$ with eigenvalue $\eta$ (in the sense of \autoref{191115171958}); i.e., it satisfies the following equation:
\eqntag{
	\de e = \eta \otimes e
\fullstop
}
Moreover, over any spectral region $U_i \subset \sf{\Sigma}$, there is a canonical flat inclusion $\cal{L} \inj \pi^\ast \cal{E}$ with respect to which this section $e$ is an eigensection for $\pi^\ast \nabla$ with eigenvalue $\eta$:
\eqntag{
	\pi^\ast \nabla e = \eta \otimes e
\fullstop
}
\end{prop}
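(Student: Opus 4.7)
The plan is to leverage item \ref{191115171247} of \Autoref{181126145627}, which states that $\de - \eta$ is a flat connection on $\cal{L}$ that is holomorphic along $C$ and has logarithmic poles only along $R$ with residues $-1/2$. Consequently, over $\sf{\Sigma} \setminus R$, the pair $(\cal{L}, \de - \eta)$ is a flat holomorphic line bundle equipped with a flat holomorphic connection, and the desired eigensection identity $\de e = \eta \otimes e$ is nothing other than the flatness equation $(\de - \eta) e = 0$ in disguise.

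For the first claim, I would fix a simply connected open $U \subset \sf{\Sigma} \setminus R$ and invoke the standard fact that a flat holomorphic connection on a line bundle over a simply connected complex manifold is canonically trivialised: parallel transport of any nonzero vector at a chosen basepoint produces a nowhere-vanishing holomorphic section $e$ of $\evat{\cal{L}}{U}$ satisfying $(\de - \eta) e = 0$, hence $\de e = \eta \otimes e$. The section $e$ remains a bona fide generator even at any points of $C \cap U$, because the logarithmic singularities of $\de$ and of $\eta$ cancel exactly in the shifted connection $\de - \eta$.

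For the second claim, I would fix a spectral region $U_i \subset \sf{\Sigma}$ with image Stokes region $U_\II = \pi_i (U_i)$. By \Autoref{181109180145}, the restriction $\cal{E}_\II$ decomposes flatly as $\cal{E}_\II \iso \Lambda_i \oplus \Lambda_{i'}$ with both summands $\nabla$-invariant. By item \ref{181127214735} of \Autoref{181126145627}, there is a canonical flat isomorphism $\evat{\cal{L}}{U_i} \iso \pi_i^\ast \Lambda_i$. Composing with the pullback of the $\nabla$-flat inclusion $\Lambda_i \inj \cal{E}_\II$ produces the desired canonical flat inclusion $\evat{\cal{L}}{U_i} \inj \evat{\pi^\ast \cal{E}}{U_i}$. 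Since this inclusion intertwines the connections on both sides, the identity $\de e = \eta \otimes e$ transports verbatim to $\pi^\ast \nabla e = \eta \otimes e$ once $e$ is viewed inside $\pi^\ast \cal{E}$.

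The hard part, to the extent there is one, has already been packaged into the sixth item of \Autoref{181126145627}, which converts an equation with logarithmic coefficients into one with holomorphic coefficients. Beyond that, the argument is purely formal assembly of the abelianisation construction, and the entire proposition is best viewed as the coordinate-free crystallisation of the explicit local calculation $e_\pm = f_\pm^{-1} \psi_\pm$ already carried out in \Autoref{191115171958}.
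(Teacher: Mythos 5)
Your proposal is correct and follows essentially the same route as the paper, which simply derives the proposition from the discussion in \Autoref{191115171958}: there the eigensection is produced explicitly as $e = f^{-1}\psi$ with $\psi$ a flat section and $\dlog f = -\eta$, which is precisely your observation that $e$ is a flat generator for the twisted connection $\de - \eta$ (made holomorphic away from $R$ by the sixth item of \Autoref{181126145627}). Your handling of the second claim via the semi-local Levelt decomposition and the canonical identification $\evat{\cal{L}}{U_i} \iso \pi_i^\ast \Lambda_i$ likewise matches the construction of the abelianisation functor, so the argument is a clean, slightly more coordinate-free repackaging of what the paper intends.
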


\begin{example}{210610200954}
Let us illustrate the above construction in the simplest possible explicit example.
Consider a logarithmic $\frak{sl}_2$-connection $(\cal{E}, \nabla)$ from \autoref{210610135430} with $d = 3$.
Namely, $X = \PPP^1$, $\cal{E} = \cal{O}_{\PPP^1}^{\oplus 2}$, and $D \coleq \set{ 0, 1, \infty}$.
Let $\AA_1, \AA_2$ be any pair of $\frak{sl} (2, \Complex)$-matrices, both with eigenvalues $\pm 1/3$, and let $\nabla$ be given by the formula \eqref{210610202346}.
Then the $\nabla$ has Levelt exponents $\pm 1/3$ at each pole.

To abelianise $\nabla$, we must choose a generic saddle-free quadratic differential on $(X, D)$ with residues $1/9$ at each point of $D$.
One such choice is the quadratic differential $\phi_1$ from \autoref{210610162950}.
Its spectral curve $\sf{\Sigma}$ was described in \autoref{210610163747}, its Stokes and spectral graphs were detailed in \autoref{210610182526}, and the relevant Stokes open cover was presented in \autoref{210610185958}.
Finally, in \autoref{210610201135}, we illustrate the abelianisation construction by displaying which Levelt line subbundle is considered on which Stokes and spectral region.\end{example}

\begin{figure}
\centering
\includegraphics[width=\textwidth]{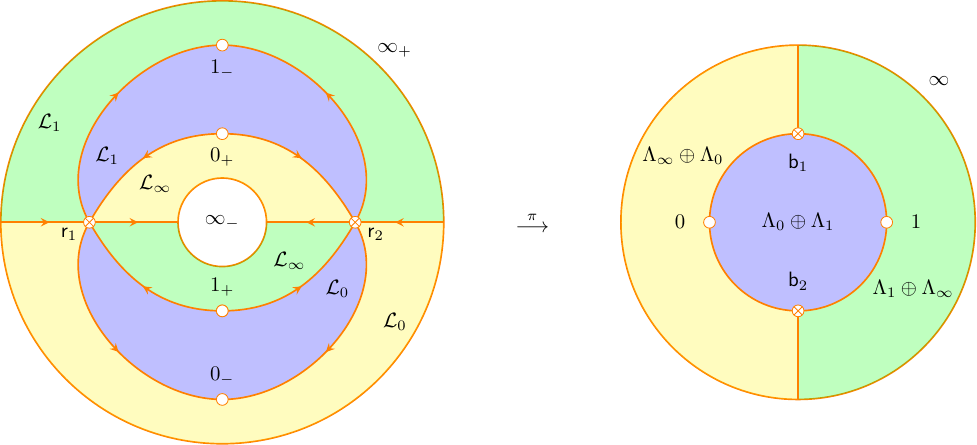}
\caption{Illustration of the construction of the abelianisation line bundle $\cal{L}$ for a connection on $(\PPP^1, \set{0,1,\infty})$ from \autoref{210610135430} using the quadratic differential $\phi_1$ from \autoref{210610162950}.}
\label{210610201135}
\end{figure}

\subsection{The Voros Cocycle}

This section introduces the main ingredient in constructing the deablianisation functor $\pi_\ab^\Gamma$, the \textit{Voros cocycle}.
Let $(\cal{L}, \de, \mu)$ be its abelianisation of $(\cal{E}, \nabla, \MM) \in \Conn_X^2 (\Gamma)$.

\paragraph{The canonical nonabelian cocycle $\VV$.}
Let $U_\alpha \in \Gamma_1$ be a Stokes ray on $X$ with polar vertex $\pp \in D$ and branch vertex $\bb \in B$.
It is a component of the intersection of exactly two Stokes regions $U_\II, U_\JJ$ (see \autoref{181129171538}).
Consider the pair of canonical identifications given by \hyperref[181127214543]{\Autoref*{181126145627}\ref*{181127214543}}:
\eqntag{\label{181107131507}
	\phi^\phantomindex_\II : \evat{\cal{E}}{U_\II} \iso \evat{\pi_\ast \cal{L}}{U_\II}
\qtext{and}
	\phi^\phantomindex_\JJ : \evat{\cal{E}}{U_\JJ} \iso \evat{\pi_\ast \cal{L}}{U_\JJ}
\fullstop
}
Over the Stokes ray $U_{\alpha}$, their ratio yields a flat automorphism of $(\pi_\ast \cal{L}, \pi_\ast \de)$:
\eqntag{\label{181127123404}
	\VV_{\alpha} \coleq \phi^\phantomindex_\JJ \circ \phi_\II^{-1} \in \Aut \big( \evat{\pi_\ast \LL}{U_{\alpha}} \big)
}
where $\pi_\ast \LL$ denotes the associated local system $\cal{ker} (\pi_\ast \de)$ on $X^\circ$.
The nerve of the cover $\frak{U}_\Gamma$ of $X^\circ$ consists of Stokes rays, so we obtain a \Cech 1-cocycle $\VV$ with values in the local system $\cal{Aut} (\pi_\ast \LL)$:
\eqntag{\label{181126185446}
	\VV \coleq \set{ \VV_{\alpha} ~\big|~ \alpha \in \Gamma_1}
		\in \check{Z}^1 \big( \frak{U}_\Gamma, \cal{Aut} ( \pi_\ast \LL) \big)
\fullstop
}

\begin{lem}{181109154245}
If $(\cal{E}, \nabla, \MM) \in \Conn_X^2 (\Gamma)$, let $(\cal{L}, \de, \mu)$ be its abelianisation, and consider the pushforward $\pi_\ast \cal{L} = \pi_\ast \pi^\ab_\Gamma \cal{E}$.
If $\VV$ is the cocycle \eqref{181126185446}, then there is a canonical isomorphism
\eqntag{
	\VV \cdot \pi_\ast \pi^\ab_\Gamma \cal{E} \iso \cal{E}
\fullstop
}
\end{lem}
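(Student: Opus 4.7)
\medskip

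The plan is to construct the isomorphism directly by gluing together the local identifications $\phi_\II : \evat{\cal{E}}{U_\II} \iso \evat{\pi_\ast \cal{L}}{U_\II}$ already produced in \hyperref[181127214543]{\Autoref*{181126145627}\ref*{181127214543}} for each Stokes region $U_\II \in \frak{U}_\Gamma$. By construction, the twisted bundle $\cal{E}' \coleq \VV \cdot \pi_\ast \cal{L}$ is the bundle whose local pieces are $\cal{E}'_\II \coleq \evat{\pi_\ast \cal{L}}{U_\II}$, glued over each Stokes ray $U_\alpha \subset U_\II \cap U_\JJ$ by the automorphism $\VV_\alpha = \phi_\JJ \circ \phi_\II^{-1}$. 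Therefore the family $\set{\phi_\II : \evat{\cal{E}}{U_\II} \iso \cal{E}'_\II}_{\II \in \Gamma_2}$ is immediately compatible with the gluing data of $\cal{E}'$: on each overlap $U_\alpha$ we have $\VV_\alpha \circ \phi_\II = \phi_\JJ$ by the very definition \eqref{181127123404} of $\VV_\alpha$.

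Concretely, I would proceed as follows. First, verify that $\cal{E}'$ is a well-defined holomorphic vector bundle on $X^\circ$ by checking the cocycle condition for $\VV$; since the Stokes open cover $\frak{U}_\Gamma$ has no nonempty triple intersections, the cocycle condition is vacuous and $\VV \in \check{Z}^1(\frak{U}_\Gamma, \cal{Aut}(\pi_\ast \LL))$ automatically defines a bundle. Next, observe that the $\phi_\II$ glue to a globally defined morphism $\Phi : \evat{\cal{E}}{X^\circ} \iso \cal{E}'$: the required compatibility is exactly the commutative square displayed right before the lemma statement, which reduces to the tautology $\phi_\JJ = \VV_\alpha \circ \phi_\II$ on each $U_\alpha$. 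Since each $\phi_\II$ is a flat isomorphism (as stated in \Autoref{181126145627}), and $\VV_\alpha$ is a flat automorphism of $\pi_\ast \LL$, the morphism $\Phi$ is automatically flat, i.e., $\Phi$ intertwines $\nabla$ with the connection on $\cal{E}'$ induced from $\pi_\ast \de$ through the cocycle action.

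It remains to extend $\Phi$ across the polar divisor $D \cup B$ to obtain an isomorphism on all of $X$. Away from $B$ this is immediate from \hyperref[181127214549]{\Autoref*{181126145627}\ref*{181127214549}}, which produces canonical flat identifications $\evat{\pi_\ast \cal{L}}{U_\pp} \iso \cal{E}_\pp$ at every $\pp \in D$ compatible with $\phi_\II$ on sectorial neighbourhoods. Across a branch point $\bb \in B$, the extension is fixed by the Deligne prescription used at the end of \autoref{181127212951} to extend $(\cal{L}, \de)$ across $R$: the germ $\cal{L}_\rr$ was defined as the kernel of the canonical map $\Lambda_\II \oplus \Lambda_\JJ \oplus \Lambda_\KK \to \cal{E}_\bb$, and pushing forward this definition identifies $\pi_\ast \cal{L}_\rr$ with $\cal{E}_\bb$ compatibly with the $\phi_\II, \phi_\JJ, \phi_\KK$. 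Putting everything together yields the claimed canonical isomorphism $\VV \cdot \pi_\ast \pi^\ab_\Gamma \cal{E} \iso \cal{E}$.

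The only step requiring genuine care is the extension across $B$: one must verify that the Deligne-extended pushforward $\pi_\ast \cal{L}$ together with the cocycle action of $\VV$ — which a priori is defined only on $X^\circ$ — produces exactly the original bundle $\cal{E}$ near the branch points. Because $\VV$ is built from the same Levelt-data identifications that were used to characterise $\cal{L}_\rr$, this verification is essentially formal, but it is the one place where the construction is not entirely tautological.
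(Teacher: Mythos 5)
Your proposal is correct and follows essentially the same route as the paper: the paper's proof is precisely the observation that the local pieces of $\VV \cdot \pi_\ast \cal{L}$ are $\evat{\pi_\ast\cal{L}}{U_\II}$ glued by $\VV_\alpha = \phi_\JJ \circ \phi_\II^{-1}$, so the identifications $\phi_\II$ of \hyperref[181127214543]{\Autoref*{181126145627}\ref*{181127214543}} assemble tautologically into the claimed isomorphism. Your additional remarks on the vacuous cocycle condition and on the extensions over $D$ and $B$ are consistent with how the paper handles those points elsewhere.
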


\begin{proof}
The action of the cocycle $\VV$ on the pushforward bundle $\pi_\ast \cal{L}$ is a new bundle $\cal{E}' \coleq \VV \cdot \pi_\ast \cal{L}$.
Explicitly, the local piece $\cal{E}'_{\II}$ over a Stokes region $U_{\II}$ is defined to be $\evat{\pi_\ast \cal{L}}{U_{\II}}$, and the gluing data over a Stokes ray $U_{\alpha} \subset U_\II \cap U_\JJ$ is given by $\VV_{\alpha}$:
\eqntag{
\begin{tikzcd}[ampersand replacement =\&, row sep = small]
			\evat{\cal{E}'_{\II}}{U_{\alpha}}
				\ar[r, "\simlow"]
				\ar[d, equal]
	\&		\evat{\cal{E}'_{\JJ}}{U_{\alpha}}
				\ar[d, equal]
\\			\evat{\pi_\ast \cal{L}}{U_{\alpha}}
				\ar[r, "\simlow", "\VV_{\alpha}"']
	\&		\evat{\pi_\ast \cal{L}}{U_{\alpha}}
\fullstop
\end{tikzcd}
}
But this commutative square together with \eqref{181107131507} and \eqref{181127123404} imply that $\cal{E}$ and $\cal{E}'$ are canonically isomorphic.
\end{proof}

\paragraph{Transposition paths.}
Let us explicitly compute each automorphism $\VV_{\alpha}$ with respect to a pair of canonical decompositions of $\pi_\ast \cal{L}$ over the Stokes ray $U_\alpha$.
Through the isomorphisms $\evat{\pi_\ast \cal{L}}{U_\II} \iso \Lambda_i \oplus \Lambda_{i'}$ and $\evat{\pi_\ast \cal{L}}{U_\JJ} \iso \Lambda_j \oplus \Lambda_{j}$, the automorphism $\VV_{\alpha} = \phi^\phantomindex_\JJ \circ \phi_\II^{-1}$ over $U_\alpha$ is just the identity on $\cal{E}$ written as a map $\Lambda_i \oplus \Lambda_{i'} \to \Lambda_j \oplus \Lambda_{j'}$.
Notice that $\Lambda_{i'} = \Lambda_{j'}$ because they are continuations of the same line bundle germ at $\pp$, so using \eqref{190514172603} we find:
\eqntag{\label{181130084156}
	\VV_{\alpha}
	= \id_\cal{E}
	=
	\mtx{ ~1 & \Delta_\alpha ~ \\ ~\HIDE{0} & g_\alpha~}
:
\begin{tikzcd}[ampersand replacement=\&, row sep = tiny, baseline=-2.5pt]
			\Lambda_{i'}
				\ar[d, "\oplus" description]
				\ar[r, equal, shorten >=-2.5pt, shorten <=-5pt, "1"]
	\&		\Lambda_{j'}
				\ar[d, "\oplus" description]
\\			\Lambda_i
				\ar[r, shorten >=-2.5pt, shorten <=-5pt, "g_\alpha"']
				\ar[ur, shorten >=-2.5pt, shorten <=-5pt, "\Delta_\alpha" description]
	\&		\Lambda_j
\end{tikzcd}
}
Now, we can decompose the map $\Delta_\alpha : \Lambda_i \to \Lambda_{j'}$ through canonical inclusions, projections, and the upper-triangular expressions \eqref{190514172603} for the identity on $\cal{E}$ as follows:
\eqntag{
	\Delta_\alpha
		= \left(
		\Lambda_i
		\too
		\begin{tikzcd}[ampersand replacement=\&, row sep = tiny, baseline=-2.5pt]
					\Lambda_{i'}
						\ar[d, "\oplus" description]
						\ar[r, shorten >=-2.5pt, shorten <=-5pt]
			\&		\Lambda_{k'}
						\ar[d, "\oplus" description]
		\\			\Lambda_i
						\ar[r, equal, shorten >=-2.5pt, shorten <=-5pt]
						\ar[ur, shorten >=-2.5pt, shorten <=-5pt]
			\&		\Lambda_k
		\end{tikzcd}
		\too
		\Lambda_k
		\too
		\begin{tikzcd}[ampersand replacement=\&, row sep = tiny, baseline=-2.5pt]
					\Lambda_k
						\ar[d, "\oplus" description]
						\ar[r, shorten >=-2.5pt, shorten <=-5pt]
			\&		\Lambda_{j'}
						\ar[d, "\oplus" description]
		\\			\Lambda_{k'}
						\ar[r, equal, shorten >=-2.5pt, shorten <=-5pt]
						\ar[ur, shorten >=-2.5pt, shorten <=-5pt]
			\&		\Lambda_j
		\end{tikzcd}
		\too
		\Lambda_{j'}
		\right)
}
We interpret the first and second upper-triangular expressions as the identity maps on $\cal{E}$ over $U_\gamma$ and $U_\beta$, respectively.
Since all these bundle maps are $\nabla$-flat, the map $\Delta_\alpha$ can be interpreted as the endomorphism of the fibre of $\cal{E}$ over a point in $U_\alpha$ obtained as the composition of $\nabla$-parallel transports $\PP_\II, \PP_\KK, \PP_\JJ$ along paths $\delta_\II$ contained in $U_\II$ from $U_\alpha$ to $U_\gamma$, followed by $\delta_\KK$ contained in $U_\KK$ from $U_\gamma$ to $U_\beta$, followed by $\delta_\JJ$ contained in $U_\JJ$ from $U_\beta$ back to $U_\alpha$ (see \autoref{181129171538}).
\begin{figure}[t]
\centering
\includegraphics{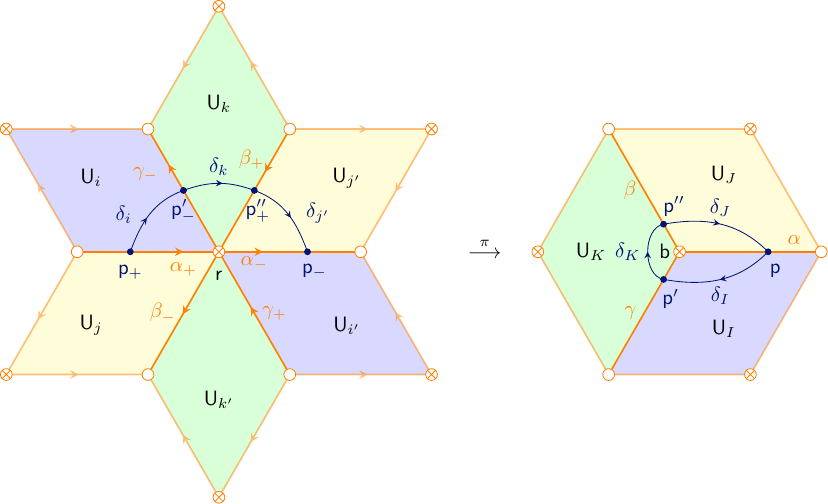}
\caption{%
$U_\II, U_\JJ, U_\KK \subset X$ are the Stokes regions with $\II = \set{i,i'}, \JJ = \set{j,j'}, \KK = \set{k,k'}$.
The stokes rays $U_\alpha, U_\gamma, U_\gamma$ are indicated by $\alpha, \beta, \gamma$ (same for the spectral rays).
$\bb \in B$ is the branch point and $\rr \in R$ is the ramification point above $\bb$.
}
\label{181129171538}
\end{figure}
Explicitly:
\eqntag{
\begin{tikzcd}[ampersand replacement =\&, column sep = small]
	\Big( ~
		\Lambda_i
			\ar[r, "\Delta_\alpha"]
\&		\Lambda_{j'}
		~	\Big)
			\ar[r, phantom, "=" description]
\&
	\Big( ~
		\Lambda_i
			\ar[r, "\PP_\II"]
\&		\Lambda_i
			\ar[r, equal, "1"]
\&		\Lambda_k
			\ar[r, "\PP_\KK"]
\&		\Lambda_k
			\ar[r, "g_{\beta}^{-1}"]
\&		\Lambda_{j'}
			\ar[r, "\PP_\JJ"]
\&		\Lambda_{j'}
		~	\Big)
\end{tikzcd}
\fullstop
}
The key idea, which goes back to Gaiotto--Moore--Neitzke \cite{MR3115984}, is to notice that this expression has an interpretation as a parallel transport for the abelian connection $\de$ on the spectral curve.
Indeed, if we fix points $\pp, \pp', \pp''$ in $U_\alpha, U_\gamma, U_\beta$ as shown in \autoref{181129171538}, then through the canonical identification of fibres using \hyperref[181127214543]{\Autoref*{181126145627}\ref*{181127214543}}, we have:
\small
\eqn{
\begin{tikzcd}[ampersand replacement =\&, column sep = small, row sep = small]
	\Big( ~
		\evat{\Lambda_i}{\pp}
			\ar[r, "\Delta_\alpha"]
			\ar[d, phantom, "\congdown" description]
\&		\evat{\Lambda_{j'}}{\pp}
			\ar[d, shift right = 10pt, phantom, "\congdown" description]
		~	\Big)
			\ar[r, phantom, "=" description]
\&
	\Big( ~
		\evat{\Lambda_i}{\pp}
			\ar[r, "\PP_\II"]
			\ar[d, phantom, "\congdown" description]
\&		\evat{\Lambda_i}{\pp}~
			\ar[r, shorten >=-5pt, shorten <=-5pt, equal, "1"]
			\ar[d, phantom, "\congdown" description]
\&		~\evat{\Lambda_k}{\pp}
			\ar[r, "\PP_\KK"]
			\ar[d, phantom, "\congdown" description]
\&		\evat{\Lambda_k}{\pp}
			\ar[r, shorten >=-5pt, shorten <=-5pt, "g_{\beta}^{-1}"]
			\ar[d, phantom, "\congdown" description]
\&		\evat{\Lambda_{j'}}{\pp}
			\ar[r, "\PP_\JJ"]
			\ar[d, phantom, "\congdown" description]
\&		\evat{\Lambda_{j'}}{\pp}
			\ar[d, shift right = 10pt, phantom, "\congdown" description]
		~	\Big)
\\
	\Big( ~
		\evat{\cal{L}}{\pp_+}
			\ar[r, "\Delta^+_\alpha"']
\&		\evat{\cal{L}}{\pp_-}
		~	\Big)
			\ar[r, phantom, "=" description]
\&
	\Big( ~
		\evat{\cal{L}}{\pp_+}
			\ar[r, "p_i"']
\&		\evat{\cal{L}}{\pp'_-} ~
			\ar[r, shorten >=-5pt, shorten <=-5pt, equal, "(g^-_{\gamma})^{-1}"']
\&		~\evat{\cal{L}}{\pp'_-}
			\ar[r, "p_k"']
\&		\evat{\cal{L}}{\pp''_+}~
			\ar[r, shorten >=-5pt, shorten <=-5pt, "(g^+_{\beta})^{-1}"']
\&		~\evat{\cal{L}}{\pp'_+}
			\ar[r, "p_{j'}"']
\&		\evat{\cal{L}}{\pp_-}
		~~	\Big)
\end{tikzcd}
\fullstop
}
\normalsize
Here, $\Delta^+_\alpha$ is defined by the diagram; we used \eqref{190523180730}, and $p_i, p_k, p_{j'}$ are $\de$-parallel transports along the paths $\delta_i, \delta_k, \delta_{j'}$ which are the lifts of $\delta_\II, \delta_\KK, \delta_\JJ$ as shown in \autoref{181129171538}.
Since $g^+_{\beta}, g^-_{\gamma}$ are precisely the gluing maps for $\cal{L}$, we find that $\Delta^+_\alpha$ is nothing but the parallel transport of $\de$ along the clockwise semicircular path $\delta_\pp^+ \coleq \delta_{j'} \delta_k \delta_{i}$ (our paths compose the same way as maps: from right to left) around the ramification point $\rr$ starting at $\pp_+$ and ending at $\pp_-$.
The Stokes graph determines such sheet transposition paths on all Stokes rays: i.e., for any $\pp \in U_\alpha$, the path $\delta_\pp^+$ on $\sf{\Sigma}$ is the unique lift starting at $\pp_+$ of a clockwise loop $\delta_\pp$ based at $\pp \in U_\alpha$ around the branch point $\bb$ (see \autoref{181102111720}).
\begin{figure}[t]
\centering
\includegraphics{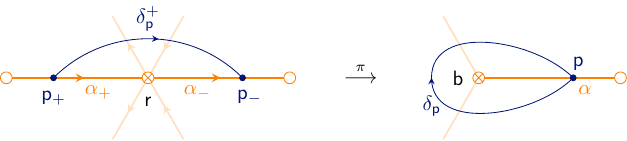}
\caption{The sheet transposition path $\delta_\pp^+$ associated with the positive spectral ray $\alpha_+$.
Its projection onto $X$ is a clockwise loop $\delta_\pp$ around the branch point $\bb$.}
\label{181102111720}
\end{figure}

\begin{lem}{181127112030}
For every Stokes ray $U_\alpha \subset X$ and every point $\pp \in U_{\alpha}$, the automorphism $\VV_{\alpha, \pp}$ of the fibre $\evat{\pi_\ast \cal{L}}{\pp}$ is:
\eqntag{\label{181004164759}
		\VV_{\alpha,\pp}
	=
	\mtx{ ~1 & \Delta_{\alpha,\pp}^+~ \\ ~\HIDE{0} & 1~}
:
\begin{tikzcd}[ampersand replacement=\&, row sep = small, baseline=-2.5pt]
			\evat{\cal{L}}{\pp_-}
				\ar[d, "\oplus" description]
				\ar[r, equal, shorten >=-2.5pt, shorten <=-5pt]
	\&		\evat{\cal{L}}{\pp_-}
				\ar[d, "\oplus" description]
\\			\evat{\cal{L}}{\pp_+}
				\ar[ur, shorten >=-2.5pt, shorten <=-5pt, "\smash{\Delta_{\alpha,\pp}^+}" description]
				\ar[r, equal, shorten >=-2.5pt, shorten <=-5pt]
	\&		\evat{\cal{L}}{\pp_+}
\end{tikzcd}
\fullstop{,}
\qquad
	\Delta_{\alpha,\pp}^+ \coleq \Par \big(\de, \delta^+_{\pp} \big)
\fullstop
}
\end{lem}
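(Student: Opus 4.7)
The plan is to convert the diagram chase carried out in the paragraphs immediately preceding the lemma into the stated matrix identity. I proceed in three steps, fixing a point $\pp \in U_\alpha$ throughout.

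\textbf{Step 1 (fibre identifications).} By \hyperref[181127214543]{\Autoref*{181126145627}\ref*{181127214543}} the isomorphisms $\phi_\II, \phi_\JJ$ identify $\evat{\pi_\ast \cal{L}}{\pp}$ simultaneously with $\evat{\Lambda_i}{\pp} \oplus \evat{\Lambda_{i'}}{\pp}$ and with $\evat{\Lambda_j}{\pp} \oplus \evat{\Lambda_{j'}}{\pp}$. Using \hyperref[181127214735]{\ref*{181127214735}} and \hyperref[191115160512]{\ref*{191115160512}}, each of these summands is further identified with the appropriate fibre $\evat{\cal{L}}{\pp_+}$ or $\evat{\cal{L}}{\pp_-}$ through $\pi_i^\ast, \pi_{i'}^\ast, \pi_j^\ast, \pi_{j'}^\ast$.

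\textbf{Step 2 (diagonal entries).} By definition $\VV_{\alpha,\pp} = \phi_\JJ \circ \phi_\II^{-1}$ is the identity on $\evat{\cal{E}}{\pp}$ read across the two Levelt decompositions, and hence has the upper-triangular form \eqref{181130084156}. Under the identifications of Step 1, the two ways of identifying $\evat{\Lambda_i}{\pp}$ and $\evat{\Lambda_j}{\pp}$ with $\evat{\cal{L}}{\pp_+}$ differ exactly by $g_\alpha^+$, whose pushforward is $g_\alpha$ by \eqref{190523180730}. Therefore the $(2,2)$-entry $g_\alpha$ of \eqref{181130084156} becomes the identity on $\evat{\cal{L}}{\pp_+}$, and similarly the $(1,1)$-entry becomes the identity on $\evat{\cal{L}}{\pp_-}$ thanks to $g_\alpha^- = \id$.

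\textbf{Step 3 (off-diagonal entry and obstacle).} The remaining task is to identify the upper-right entry with $\Par(\de, \delta_\pp^+)$. The three-region factorisation of $\Delta_\alpha$ given in the text expresses it as $\PP_\JJ \circ g_\beta^{-1} \circ \PP_\KK \circ g_\gamma^{-1} \circ \PP_\II$. Under the fibre identifications, each $\nabla$-parallel transport $\PP_\bullet$ along a path in a Stokes region converts to the $\de$-parallel transport along its unique lift to the corresponding spectral region (this is the flatness of the isomorphisms \hyperref[181127214735]{\ref*{181127214735}}, or equivalently the eigensection statement of \Autoref{191115181734}), while $g_\beta, g_\gamma$ translate to the gluings $g_\beta^+, g_\gamma^-$ of $\cal{L}$ over $U_\beta^+, U_\gamma^-$. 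The concatenation is thus the $\de$-parallel transport along $\delta_{j'} \delta_k \delta_i$. The only genuine subtlety is an orientation check: one must confirm that this concatenation is the clockwise semicircular lift depicted in \autoref{181102111720} starting at $\pp_+$ and ending at $\pp_-$, which follows from the convention (fixed in \hyperref[181114164806]{\S\ref*{181114164952}.\ref*{181114164806}}) that $U_\II \to U_\JJ$ crosses $\alpha$ anti-clockwise around its branch vertex. Once this consistency is verified, $\Delta_{\alpha,\pp}^+ = \Par(\de, \delta_\pp^+)$, completing the proof.
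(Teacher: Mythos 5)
Your proposal is correct and follows essentially the same route as the paper: the paper likewise starts from the upper-triangular expression \eqref{181130084156} for $\VV_\alpha = \phi_\JJ\phi_\II^{-1}$, factors $\Delta_\alpha$ through the three Stokes regions around the branch vertex, and then uses the canonical flat identifications of \Autoref{181126145627} to convert the $\nabla$-parallel transports and gluing maps into the $\de$-parallel transport along the concatenated lift $\delta_{j'}\delta_k\delta_i = \delta_\pp^+$. Your Step 2, making explicit why the diagonal entries become the identity (the identifications of $\Lambda_i, \Lambda_j$ with $\evat{\cal{L}}{\pp_+}$ differ precisely by $g_\alpha^+ = (\pi_\alpha^+)^\ast g_\alpha$), is a point the paper leaves implicit but is entirely consistent with its argument.
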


The correspondence $\pp_+ \mapsto \delta^+_{\pp}$ is a well-defined map $\delta^+_\alpha : U^+_\alpha \to \mathsf{\Pi}_1 (\sf{\Sigma}^\circ)$, where $\mathsf{\Pi}_1 (\sf{\Sigma}^\circ)$ is the fundamental groupoid of the punctured spectral curve, which is the set of paths on $\sf{\Sigma}^\circ = \sf{\Sigma} \setminus R$ considered up to homotopy with fixed endpoints.
If we define a flat bundle isomorphism 
\eqntag{
	\Delta_\alpha^+ \coleq \Par (\de, \delta^+_\alpha) : 
		\evat{\cal{L}}{U^+_\alpha} \iso \sigma^\ast \evat{\cal{L}}{U^+_\alpha}
\fullstop{,}
}	
then $\Delta_\alpha = \pi_\ast \Delta^+_\alpha$ defines an endomorphism of $\pi_\ast \cal{L}$ over the Stokes ray $U_{\alpha}$.
So \Autoref{181127112030} may be expressed in terms of bundle maps as follows.

\begin{lem}{181129205829}
For every $\alpha \in \Gamma_1$, the automorphism $\VV_{\alpha}$ of $\evat{\pi_\ast \cal{L}}{U_\alpha}$ is $\VV_{\alpha} = \id + \pi_\ast \Delta^+_\alpha$.
\end{lem}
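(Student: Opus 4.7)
The statement is the global bundle-theoretic reformulation of \Autoref{181127112030}, so the plan is essentially a bookkeeping exercise that assembles the pointwise data into bundle morphisms. First I would make the decomposition on both sides precise. Since $\pi$ is unramified and trivial over the Stokes ray $U_\alpha$, we have $\pi^{-1} (U_\alpha) = U_\alpha^- \sqcup U_\alpha^+$, which induces a canonical splitting
\eqn{
	\evat{\pi_\ast \cal{L}}{U_\alpha}
		\iso (\pi_\alpha^-)_\ast \evat{\cal{L}}{U_\alpha^-} \oplus (\pi_\alpha^+)_\ast \evat{\cal{L}}{U_\alpha^+}
\fullstop
}
At a point $\pp \in U_\alpha$ this specialises to the decomposition $\evat{\pi_\ast \cal{L}}{\pp} \iso \evat{\cal{L}}{\pp_-} \oplus \evat{\cal{L}}{\pp_+}$ used in the fibrewise formula of \Autoref{181127112030}.

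Next I would unpack the symbol $\pi_\ast \Delta_\alpha^+$. The map $\Delta_\alpha^+ = \Par (\de, \delta_\alpha^+)$ is a flat isomorphism $\evat{\cal{L}}{U_\alpha^+} \iso \sigma^\ast \evat{\cal{L}}{U_\alpha^+}$, and since $\sigma (U_\alpha^+) = U_\alpha^-$, applying $\pi_\ast$ converts this into a flat morphism from the $+$-summand to the $-$-summand whose fibre at $\pp$ is the parallel transport $\evat{\cal{L}}{\pp_+} \to \evat{\cal{L}}{\pp_-}$ along $\delta_\pp^+$. Extending by zero on the $-$-summand, one obtains $\pi_\ast \Delta_\alpha^+$ as a nilpotent endomorphism of $\evat{\pi_\ast \cal{L}}{U_\alpha}$ whose matrix with respect to the decomposition above is precisely
\eqn{
	\pi_\ast \Delta_\alpha^+
		= \mtx{~0 & \Delta_\alpha^+~ \\ ~\HIDE{0} & 0~}
\fullstop
}

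Combining these two identifications, the matrix formula \eqref{181004164759} of \Autoref{181127112030} becomes exactly $\VV_{\alpha,\pp} = \id + (\pi_\ast \Delta_\alpha^+)_\pp$ at each fibre $\pp \in U_\alpha$. To promote this to an equality of bundle morphisms, I would check that both sides are $\pi_\ast \de$-flat: the left-hand side by construction, since $\VV_\alpha$ is defined as a ratio of flat identifications \eqref{181107131507}, and the right-hand side because $\Delta_\alpha^+$ is $\de$-flat (parallel transport along the $\pp$-family of paths $\delta_\pp^+$ is locally constant in the homotopy class in $\mathsf{\Pi}_1 (\sf{\Sigma}^\circ)$ because each path is a small clockwise semicircle around the single ramification vertex $\rr$ whose endpoints move continuously as $\pp$ varies in $U_\alpha$). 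A flat endomorphism of a local system is determined by its value at any one fibre, so the pointwise equality extends globally over $U_\alpha$.

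There is no serious obstacle here; the only point that requires care is the identification of $\pi_\ast \Delta_\alpha^+$ with an off-diagonal endomorphism of $\pi_\ast \cal{L}$, since this relies on the isomorphism $\sigma^\ast \evat{\cal{L}}{U_\alpha^+} \iso \evat{\cal{L}}{U_\alpha^-}$ implicit in the pushforward. Once this convention is fixed, the lemma is an immediate translation of the fibrewise statement.
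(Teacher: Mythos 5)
Your proposal is correct and follows essentially the same route as the paper, which treats this lemma as the bundle-level repackaging of the fibrewise formula in \Autoref{181127112030}: one observes that $\pp_+ \mapsto \delta^+_\pp$ defines a flat family $\delta^+_\alpha : U^+_\alpha \to \mathsf{\Pi}_1(\sf{\Sigma}^\circ)$, so that $\Delta^+_\alpha = \Par(\de, \delta^+_\alpha)$ is a flat bundle map $\evat{\cal{L}}{U^+_\alpha} \to \sigma^\ast \evat{\cal{L}}{U^+_\alpha}$ whose pushforward is the off-diagonal endomorphism of $\pi_\ast\cal{L}$ appearing in \eqref{181004164759}. Your extra care about the splitting $\evat{\pi_\ast\cal{L}}{U_\alpha} \cong (\pi^-_\alpha)_\ast\evat{\cal{L}}{U^-_\alpha} \oplus (\pi^+_\alpha)_\ast\evat{\cal{L}}{U^+_\alpha}$ and the flatness check are exactly the bookkeeping the paper leaves implicit.
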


\paragraph{The Voros cocycle.}
One of the central observations in this paper is that formula \eqref{181004164759} does not depend on the fact that $(\cal{L}, \de)$ is the abelianisation of $(\cal{E}, \nabla)$.
Indeed, this formula is written purely in terms of the parallel transport along canonically defined paths on $\sf{\Sigma}^\circ$ and the pushforward functor $\pi_\ast$.
In other words, if $(\cal{L}, \de) \in \Conn_\sf{\Sigma}^1$ is \textit{any} abelian connection (i.e., not a priori the abelianisation of some connection on $X$), then for each Stokes ray $\alpha \in \Gamma_1$, we can consider the automorphism $\VV_{\alpha}$ of $\pi_\ast \cal{L}$ over $U_{\alpha}$ defined by
\eqntag{\label{181130121454}
		\evat{\VV_{\alpha}}{\pp}
	=
	\mtx{ ~1 & \evat{\Delta^+_{\alpha}}{\pp_+} ~ \\ ~\HIDE{0} & 1~}
:
\begin{tikzcd}[ampersand replacement=\&, row sep = small, baseline=-2.5pt]
			\evat{\cal{L}}{\pp_-}
				\ar[d, "\oplus" description]
				\ar[r, equal, shorten >=-2.5pt, shorten <=-5pt]
	\&		\evat{\cal{L}}{\pp_-}
				\ar[d, "\oplus" description]
\\			\evat{\cal{L}}{\pp_+}
				\ar[ur, shorten >=-2.5pt, shorten <=-5pt, "\smash{\Delta^+_{\alpha}}" description]
				\ar[r, equal, shorten >=-2.5pt, shorten <=-5pt]
	\&		\evat{\cal{L}}{\pp_+}
\end{tikzcd}
\fullstop{,}
\qquad
	\evat{\Delta^+_{\alpha}}{\pp_+} \coleq \Par \big(\de, \evat{\delta^+_{\alpha}}{\pp_+} \big)
\fullstop{,}
}
for each $\pp \in U_{\alpha}$ with preimages $\pp_\pm \in U_{\alpha}^\pm$.
As a bundle automorphism over $U_{\alpha}$,
\eqntag{\label{181130121504}
	\VV_{\alpha} = \id + \pi_\ast \Delta^+_\alpha
	\in \Aut \big( \pi_\ast \LL_{\alpha} \big)
\fullstop{,}
}
where $\pi_\ast \LL \coleq \cal{ker} (\pi_\ast \de)$ and $\pi_\ast \LL_{\alpha} \coleq \evat{\pi_\ast \LL}{U_{\alpha}}$.
This yields a cocycle
\eqntag{\label{181127115041}
	\VV \coleq \set{ \VV_{\alpha} ~\big|~ \alpha \in \Gamma_1}
		\in \check{Z}^1 \Big(\frak{U}_{\Gamma}, \cal{Aut} ( \pi_\ast \LL) \Big)
\fullstop
}
Now, if $\varphi : (\cal{L}, \de) \too (\cal{L}', \de')$ is a morphism in $\Conn_\sf{\Sigma}^1$, and $\VV, \VV'$ are respectively the cocycles for $\cal{L}, \cal{L}'$ defined by the formula \eqref{181130121454}, then the identity $\de \varphi = \varphi \de'$ immediately implies the following commutative square for every $\alpha$:
\eqntag{\label{181103130752}
\begin{tikzcd}[ampersand replacement = \&]
			\pi_\ast \LL_{\alpha}
					\ar[r, "\VV_{\alpha}"]
					\ar[d, "\pi_\ast \varphi"']
	\&		\pi_\ast \LL_{\alpha}
					\ar[d, "\pi_\ast \varphi"]
\\			\pi_\ast \LL'_{\alpha}
					\ar[r, "\VV'_{\alpha}"']
	\&		\pi_\ast \LL'_{\alpha}
\end{tikzcd}
\fullstop
}
In other words, for every Stokes ray $\alpha \in \Gamma_1$, the collection 
\eqntag{\label{181127120526}
	\mathbb{V}_{\alpha}
			\coleq \set{\Big. \VV_{\alpha} \in \Aut \big(\pi_\ast \LL_{\alpha} \big) }_{(\cal{L}, \de)}
\fullstop{,}
}
indexed by abelian connections $(\cal{L}, \nabla) \in \Conn_\sf{\Sigma}^1$, forms a natural transformation
\eqntag{
	\mathbb{V}_{\alpha} : \pi_\ast \Rightarrow \pi_\ast
\fullstop
}
of the pushforward functor \eqref{181126184346}, defined over $U_{\alpha}$.
We obtain a cocycle valued in the local system $\cal{Aut} (\pi_\ast)$ of nonabelian groups on the punctured base curve $X^\circ$ consisting of natural automorphisms of $\pi_\ast$.

\begin{defn}{181127120022}
The \dfn{Voros cocycle} is the nonabelian \Cech $1$-cocycle
\eqntag{
	\Voros 
		\coleq \set{\Voros_{\alpha} ~\big|~ \alpha \in \Gamma_1}
		\in \check{Z}^1 \Big(\frak{U}_\Gamma, \cal{Aut} (\pi_\ast) \Big)
\fullstop
\tag*{\qedhere}
}
\end{defn}

\paragraph{Abelianisation of the Voros cocycle.}
The parallel transports $\Delta_\alpha$ can also be arranged into a cocycle as follows.
If $(\cal{L}, \de) \in \Conn_\sf{\Sigma}^1$ is any abelian connection, then $\Delta_\alpha^+ = \Par (\de, \delta_\alpha^+) \in \Hom (\LL_\alpha^+, \LL_\alpha^-) = \Hom (\LL_\alpha^+, \sigma^\ast \LL_\alpha^+)$, where $\LL \coleq \cal{ker} (\de)$ and  $\LL_\alpha^\pm \coleq \evat{\LL}{U^\pm_{\alpha}}$ for each $\alpha \in \Gamma^+_1$.
The sheaf $\cal{Hom} (\LL, \sigma^\ast \LL)$ is a local system of \textit{abelian} groups, and we can define an abelian \Cech $1$-cocycle on $\sf{\Sigma}^\circ$ by
\eqntag{\label{181127123642}
	\Delta \coleq \set{\Delta_\alpha^+, \Delta_\alpha^- ~\big|~ \pm \alpha \in \vec{\Gamma}_1^\pm}
	\in \check{Z}^1 \Big(\frak{U}_\vec{\Gamma}, \cal{Hom} (\LL, \sigma^\ast \LL) \Big)
\fullstop{,}
}
by $\Delta^+_\alpha \coleq \Par (\de, \delta^+_\alpha)$ and $\Delta_{\alpha}^- \coleq 0$.
If $\varphi : (\cal{L}, \de) \too (\cal{L}', \de')$ is a morphism in $\Conn_\sf{\Sigma}^1$, and $\Delta, \Delta'$ are the corresponding cocycles, then the identity $\de \varphi = \varphi \de'$ implies for every $\alpha$ a pair of commutative squares:
\eqntag{\label{191116095835}
\begin{tikzcd}[ampersand replacement = \&]
			\LL_\alpha^\pm
					\ar[r, "\Delta_\alpha^\pm"]
					\ar[d, "\varphi"']
	\&		\sigma^\ast \LL_\alpha^\pm
					\ar[d, "\sigma^\ast \varphi"]
\\			\LL_\alpha^{\prime\pm}
					\ar[r, "\Delta^{\prime\pm}_\alpha"']
	\&		\sigma^\ast \LL_\alpha^{\prime\pm}
\end{tikzcd}
\fullstop
}
In other words, for every $\alpha$, the collection of flat homomorphisms 
\eqntag{
	\bbDelta_{\alpha}^\pm
		\coleq \set{\Big. \Delta_{\alpha}^\pm \in \Hom (\LL_{\alpha}^\pm, \sigma^\ast \LL_{\alpha}^\pm)}_{(\cal{L}, \de)}
\fullstop{,}
}
indexed by abelian connections $(\cal{L}, \de) \in \Conn_\sf{\Sigma}^1$, forms a natural transformation
\eqntag{
	\bbDelta_{\alpha}^\pm : \id \implies \sigma^\ast
\fullstop{,}
}
defined over $U_{\alpha}^\pm$.
Here, $\sigma^\ast : \Conn_\sf{\Sigma}^1 \to \Conn_\sf{\Sigma}^1$ is the pullback functor by the canonical involution $\sigma$.
Thus, we obtain a cocycle valued in the local system $\cal{Hom} (\id, \sigma^\ast)$ of abelian groups on the punctured spectral curve $\sf{\Sigma}^\circ$ consisting of natural transformations from the identity functor $\id$ to the pullback functor $\sigma^\ast$:
\eqntag{\label{190129125956}
	\bbDelta
		\coleq \set{\mathbbold{\Delta}_{\alpha}^\pm ~\big|~ \alpha \in \Gamma_1}
		\in \check{Z}^1 \Big(\frak{U}_\vec{\Gamma}, \cal{Hom} (\id, \sigma^\ast) \Big)
\fullstop
}
Formula \eqref{181130121504} makes it apparent that the Voros cocycle $\Voros$ is completely determined by the cocycle $\bbDelta$; let us make this precise.
Suppose $(\cal{L}, \de) \in \Conn_\sf{\Sigma}^1$, and choose a point $\pp \in U_{\alpha}$ for some $\alpha$.
If $\pp_\pm \in U_{\alpha}^\pm$ are the two preimages of $\pp$, then the canonical isomorphism $\pi_\ast \LL_\pp \iso \LL_{\pp_-} \oplus \LL_{\pp_+}$ on stalks induces a canonical inclusion of $\cal{Hom} (\LL, \sigma^\ast \LL)_{\pp_\pm}$ into $\cal{End} (\pi_\ast \LL)_\pp$ via
\eqn{
	\cal{Hom} (\LL, \sigma^\ast \LL)_{\pp_\pm}
		= \Hom (\LL_{\pp_\pm}, \sigma^\ast \LL_{\pp_\pm})
		= \Hom (\LL_{\pp_\pm}, \LL_{\pp_\mp})
		\inj \End (\pi_\ast \LL_\pp )
		= \cal{End} (\pi_\ast \LL)_\pp
\fullstop
}
Given any $c \in \cal{Hom} (\LL, \sigma^\ast \LL)_{\pp_\pm}$, we denote its image in $\cal{End} (\pi_\ast \LL)_\pp$ by $\pi_\ast c$.

\begin{prop}{181102200623}
The Voros cocycle $\Voros$ and the abelian cocycle $\bbDelta$ satisfy 
\eqntag{
	\mathbb{V} = \bbid + \pi_\ast \mathbbold{\Delta}
\fullstop{,}
}
where $\bbid$ is the identity cocycle.
\end{prop}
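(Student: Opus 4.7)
The proposition amounts to an unpacking of how the two cocycles have been set up, so the plan is simply to compare the formulas on both sides pointwise over each Stokes ray $U_\alpha$.

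First I would spell out the map \eqref{181127132733} on the level of stalks. Given $\pp \in U_\alpha$ with preimages $\pp_\pm \in U_\alpha^\pm$, the canonical splitting $\pi_\ast \LL_\pp \iso \LL_{\pp_-} \oplus \LL_{\pp_+}$ identifies $\cal{End}(\pi_\ast \LL)_\pp$ with the $2 \times 2$ block matrices in this decomposition. Under this identification, an element $c^+ \in \cal{Hom}(\LL, \sigma^\ast \LL)_{\pp_+} = \Hom(\LL_{\pp_+}, \LL_{\pp_-})$ embeds as $\pi_\ast c^+$, the strictly upper-triangular matrix whose only nonzero entry is $c^+$ itself; and similarly $\pi_\ast c^-$ is strictly lower-triangular.

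Next I would apply this to the cocycle $\bbDelta$ defined in \eqref{190129125956}. By construction $\bbDelta_\alpha^+$ has component $\Delta_\alpha^+ = \Par(\de, \delta_\alpha^+)$ and $\bbDelta_\alpha^- = 0$, so the preceding paragraph gives
\eqn{
\bbid_\pp + \pi_\ast \bbDelta_\alpha^+ \big|_\pp + \pi_\ast \bbDelta_\alpha^- \big|_\pp
	= \mtx{1 & \Delta_\alpha^+\vert_{\pp_+} \\ 0 & 1}
}
as an endomorphism of $\LL_{\pp_-} \oplus \LL_{\pp_+}$. This is precisely the formula \eqref{181130121454} defining $\VV_{\alpha,\pp}$, so on each abelian connection $(\cal{L},\de)$ and each $\pp \in U_\alpha$ the two sides agree. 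Since $\VV_\alpha$ and $\bbid + \pi_\ast \bbDelta$ are natural in $(\cal{L},\de)$ by \eqref{181103130752} and \eqref{191116095835}, the equality holds as natural transformations over $U_\alpha$, and hence as cocycles in $\check{Z}^1(\frak{U}_\Gamma, \cal{Aut}(\pi_\ast))$.

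There is no serious obstacle here; the only point requiring care is checking that the combinatorial pullback of cocycles along the double cover $\dot{\frak{U}}_{\vec{\Gamma}} \to \dot{\frak{U}}_\Gamma$ genuinely sums the two contributions $\pi_\ast \bbDelta_\alpha^+$ and $\pi_\ast \bbDelta_\alpha^-$ over the single ray $U_\alpha$, rather than producing two separate entries. This is what the definition \eqref{181127132733} of the map $c \mapsto \bbid + \pi_\ast c$ encodes, and since $\bbDelta_\alpha^-$ vanishes identically the sum collapses to $\pi_\ast \Delta_\alpha^+$, matching the bundle-level formula \eqref{181130121504} directly.
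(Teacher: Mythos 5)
Your proof is correct and follows essentially the same route as the paper: the paper likewise treats the proposition as an immediate unpacking of the stalk-level inclusion $\cal{Hom}(\LL,\sigma^\ast\LL)_{\pp_\pm} \inj \cal{End}(\pi_\ast\LL)_\pp$, the vanishing of $\Delta_\alpha^-$, and the identification of the resulting unipotent matrix with formula \eqref{181130121504}. No gaps; your closing remark about the double cover of nerves summing the two contributions is exactly the point the paper's map \eqref{181127132733} is designed to encode.
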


That is to say, the nonabelian Voros cocycle $\Voros$ is actually `in disguise' the data of an \textit{abelian} cocycle $\bbDelta$ but on a different curve.
In other words, $\bbDelta$ should be thought of as the \textit{abelianisation} of the Voros cocycle.

\begin{proof}
Notice that $\pi$ induces a double cover $\dot{\frak{U}}_\vec{\Gamma} \to \dot{\frak{U}}_\Gamma$, yielding a map on cocycles:
\eqntag{\label{181127132733}
	\check{Z}^1 \Big(\frak{U}_\vec{\Gamma}, \cal{Hom} (\id, \sigma^\ast) \Big)
		\too
	\check{Z}^1 \Big(\frak{U}_\Gamma, \cal{Aut} (\pi_\ast) \Big)
\qtext{given by}
	c \mapstoo \bbid + \pi_\ast c
\fullstop
}
Then formula \eqref{181130121504} implies that $\Voros$ is the image of $\bbDelta$.
\end{proof}

\subsection{The Nonabelianisation Functor}
\label{181130123445}
In this section, we construct the nonabelianisation functor $\pi_\ab^\Gamma$ and prove that it is an inverse equivalence to the abelianisation functor $\pi_\Gamma^\ab$.
The main ingredient is the Voros cocycle $\Voros$, and the construction proceeds in two steps.
If $(\cal{L}, \de)$ is an abelian connection on $\sf{\Sigma}$, we first use the the pushforward functor $\pi_\ast$ to obtain a rank-two connection $(\pi_\ast \cal{L}, \pi_\ast \de)$ on $(X, D \cup B)$.
But $\pi_\ast \de$ does \textit{not} holomorphically extend over the branch locus $B$, because it has nontrivial monodromy around $B$, as we remarked after the proof of \Autoref{181116135914}.
Therefore, $\pi_\ast$ cannot invert $\pi^\ab_\Gamma$, because its image is not even contained in $\Conn_X^2$.
Instead, step two is to use the Voros cocycle $\Voros$ to deform $\pi_\ast$ as a functor.
The result is the nonabelianisation functor $\pi_\ab^\Gamma$.

\paragraph{Construction of $\nabla$.}
Given any abelian connection $(\cal{L}, \de, \mu) \in \Conn_\sf{\Sigma}^1$, we construct $(\cal{E}, \nabla, \MM) \in \Conn_X^2 (\Gamma)$.
Consider the pushforward $(\pi_\ast \cal{L}, \pi_\ast \de, \pi_\ast \mu)$.
The Voros cocycle $\Voros$ determines a cocycle $\VV \coleq \Voros (\cal{L}) \in \check{Z}^1 \big(\frak{U}_\Gamma, \cal{Aut} (\pi_\ast \cal{L}) \big)$.

\paragraph{Definition over Stokes regions.}
The main step in the construction is to use $\VV$ to reglue $\pi_\ast \cal{L}$ over Stokes rays.
For each Stokes region $U_\II$, let 
\eqntag{
	\cal{E}_\II \coleq \evat{\pi_\ast \cal{L}}{U_{\II}},
\qquad
	\nabla_\II \coleq \evat{\pi_\ast \de}{U_{\II}},
\qquad
	\MM_\II \coleq \evat{\pi_\ast \mu}{U_{\II}},
}
and if $U_\alpha$ is a Stokes ray in the ordered double intersection $U_\II \cap U_\JJ$, then the gluing over $U_\alpha$ is given by $\VV_\alpha : \cal{E}_\II \iso \cal{E}_\JJ$.
If $U_i$ is a spectral region in the preimage of $U_\II$, then since $\cal{E}_\II (U_\II) = \cal{L} (U_i) \oplus \sigma^\ast \cal{L} (U_i)$, the map $\MM_\II$ defines an $\frak{sl}_2$-structure on each local piece $\cal{E}_{\II}$.
Moreover, $\MM_\II$ and $\MM_\JJ$ glue over $U_\alpha$ because $\VV_{\alpha}$ is unipotent with respect to the corresponding decompositions.

\paragraph{Definition at the poles.}
Recall that the infinitesimal punctured disc $U_\pp^\ast$ centred at a point $\pp \in D$ is covered by sectorial neighbourhoods coming from the Stokes regions incident to $\pp$.
Thanks to the upper-triangular nature of the Voros cocycle $\VV$, we obtain a flat bundle $\cal{E}_\pp^\ast$ over $U_\pp^\ast$ equipped with a filtration $(\cal{E}_\pp^\ast)^\bullet$ whose associated graded is canonically isomorphic to $\evat{\pi_\ast \cal{L}}{U_{\pp}^\ast}$.
Now, it is a simple fact that if the associated graded of a filtered connection extends over a point, then the filtered connection itself extends with the same Levelt exponents.
Thus, $\cal{E}_\pp^\ast$ has a canonical extension over $U_\pp$ to a bundle $\cal{E}_\pp$ with connection $\nabla_\pp$ that has logarithmic poles at $\pp$ and Levelt exponents $\pm \lambda_\pp$.
It remains to define $\nabla$ over the branch locus $B$.

\paragraph{Definition at the branch points.}
We will first compute the monodromy of $\nabla$ around each branch point directly to show that it is trivial, and then use Deligne's canonical extension \cite[pp.91-96]{MR0417174}.

\begin{lem}{181109134108}
The monodromy of $\nabla$ around any branch point is trivial.
Therefore, the connection $(\cal{E}, \nabla, \MM)$ on $X^\circ$ has a canonical holomorphic extension over $B$.
\end{lem}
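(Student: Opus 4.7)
The plan is to compute directly the monodromy of $\nabla$ along a small loop around a branch point $\bb \in B$, show it equals the identity, and then invoke Deligne's canonical extension to conclude. Let $\rr \in R$ be the ramification point above $\bb$, and let $U_\II, U_\JJ, U_\KK$ be the three Stokes regions incident to $\bb$ in cyclic order, with intervening Stokes rays $U_\alpha, U_\beta, U_\gamma$ as in \autoref{181129171538}. Fix a basepoint $\pp_0 \in U_\II$ close to $\bb$, so that $\cal{E}_{\pp_0}$ is canonically identified with $(\pi_\ast \cal{L})_{\pp_0} = \cal{L}_{q_1} \oplus \cal{L}_{q_2}$, where $q_1, q_2 \in \sf{\Sigma}$ are the two preimages of $\pp_0$.

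The first step is to decompose a small loop around $\bb$ based at $\pp_0$ into six arcs --- three interior arcs in $U_\II, U_\JJ, U_\KK$ separated by three crossings of $U_\alpha, U_\beta, U_\gamma$ --- and to express the monodromy of $\nabla$ as
\[
M_\bb(\nabla) = T_4 \circ \VV_\gamma \circ T_3 \circ \VV_\beta \circ T_2 \circ \VV_\alpha \circ T_1,
\]
where each $T_i$ denotes $\pi_\ast \de$-parallel transport within a single Stokes region and each $\VV_\square$ is the Voros automorphism at the corresponding crossing. Collecting the $T_i$ into the underlying monodromy $M_\bb(\pi_\ast \de)$ of $\pi_\ast \de$ around $\bb$ and conjugating each Voros factor into the fibre at $\pp_0$, this schematically rewrites as $M_\bb(\pi_\ast \de)$ times a product of conjugated Voros matrices.

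The second step is the explicit computation. By \Autoref{181127112030}, each conjugated Voros factor is unipotent with off-diagonal entry equal to a $\de$-parallel transport along a sheet-transposing path on $\sf{\Sigma}$ that traverses a half-loop around $\rr$. The underlying factor $M_\bb(\pi_\ast \de)$ is an anti-diagonal sheet-swap whose entries are themselves $\de$-parallel transports along half-loops around $\rr$; by \Autoref{181116155648}, the $\de$-monodromy around a full loop at $\rr$ equals $-1$. Expanding $M_\bb(\nabla)$ in the sheet basis $\cal{L}_{q_1} \oplus \cal{L}_{q_2}$ at $\pp_0$, every entry becomes a signed sum of compositions of $\de$-parallel transports along paths on $\sf{\Sigma}$ that close up either to the constant loop or to a complete loop around $\rr$. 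The $-1$ coming from the latter compensates the permutation sign of the sheet-swap, so that the diagonal entries reduce to $1$ and the off-diagonal entries vanish; thus $M_\bb(\nabla) = \id$.

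The main obstacle is the bookkeeping in the second step. The three conjugated Voros matrices are each upper-triangular in a basis adapted to the polar vertex of their defining Stokes ray, and these bases generally differ from each other and from the intrinsic source/sink labeling at $\pp_0$. Carefully tracking the orientations of the sheet-transposing half-loops and verifying that each off-diagonal contribution eventually closes up to a full loop around $\rr$ is the heart of the computation. Once $M_\bb(\nabla) = \id$ is established, Deligne's canonical extension \cite[pp.~91--96]{MR0417174} furnishes the unique holomorphic extension of $(\cal{E}, \nabla)$ over $\bb$ with trivial residue, and the $\frak{sl}_2$-structure $\MM$ extends automatically because the Voros cocycle is unipotent and hence preserves determinants.
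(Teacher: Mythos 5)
Your proposal follows essentially the same route as the paper: decompose a loop around $\bb$ into three short paths crossing the Stokes rays $\alpha, \beta, \gamma$, write the monodromy as a product of diagonal $\pi_\ast\de$-transports and unipotent Voros factors, and use the relations between the sheet-transposing paths and the lifted arcs together with the $-1$ monodromy of $\de$ around $\rr$ (\Autoref{181116155648}) to cancel the off-diagonal terms and the permutation sign. The bookkeeping you flag as the remaining obstacle is exactly what the paper carries out via the explicit path identities \eqref{181108141120} and the $2\times 2$ matrix multiplication, so your outline is correct and matches the paper's argument.
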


The technique is to express the parallel transport of $\nabla$ along paths on $X$ in terms of the parallel transport of $\de$ along their lifts to $\sf{\Sigma}$ as well as the sheet transposition paths.
We adopt the following notation for the parallel transports of $\nabla, \de, \pi_\ast \de$, respectively:
\eqn{
	\PP : \sf{\Pi}_1 (X^\circ) \to \GL (\cal{E}),
\qquad p : \sf{\Pi}_1 (\sf{\Sigma}^\circ) \to \GL (\cal{L}),
\qquad \pi_\ast p : \sf{\Pi}_1 (X^\circ) \to \GL (\pi_\ast \cal{L})
\fullstop
}
It follows immediately from the construction of $\cal{E}$ that if $\wp$ is a path on $X^\circ$ contained in a Stokes region, then $\PP (\wp) = \pi_\ast p (\wp)$.
Explicitly, let $\wp', \wp''$ be the two lifts of $\wp$ to $\sf{\Sigma}$.
Let $\xx, \yy$ be the startpoint and the endpoint of $\wp$, and similarly for $\wp', \wp''$.
Then, for example, the fibre $E_\xx = \evat{\cal{E}}{\xx}$ is the direct sum of fibres $L_{\xx'} \oplus L_{\xx''}$ of $\cal{L}$.
With respect to these decompositions, the parallel transport $\PP (\wp) : E_{\xx} \too E_{\yy}$ is expressed as
\eqntag{\label{181108114912}
	\PP (\wp) 
		= \pi_\ast p (\wp)
		= \mtx{ p (\wp') & \HIDE{0} \\ \HIDE{0} & p (\wp'') }
:
\begin{tikzcd}[ampersand replacement=\&, row sep = small, baseline=-2.5pt]
			L_{\xx'}
				\ar[d, "\oplus" description]
				\ar[r, shorten >=-2.5pt, shorten <=-5pt]
	\&		L_{\yy'}
				\ar[d, "\oplus" description]
\\			L_{\xx''}
				\ar[r, shorten >=-2.5pt, shorten <=-5pt]
	\&		L_{\yy''}
\end{tikzcd}
\fullstop
}

We say that a path $\wp$ on $X^\circ$ (or $\sf{\Sigma}^\circ$) is a \dfn{short path} if its endpoints do not belong to the Stokes graph $\Gamma$ (or to the spectral graph $\vec{\Gamma}$) and it intersects at most one Stokes ray (or spectral ray).
If $\wp$ is a short path on $X^\circ$ that intersects a Stokes ray ${\alpha} \in \Gamma_1$, then $\wp$ is divided into two segments $\wp_-, \wp_+$ (\autoref{181109141925}).
\begin{figure}[t]
\centering
\includegraphics{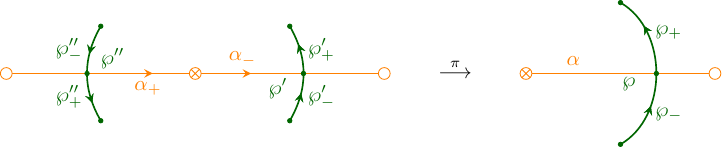}
\caption{A short path $\wp$ on $X$ intersecting the Stokes ray $\alpha$ and its lifts $\wp', \wp''$ to $\sf{\Sigma}$.}
\label{181109141925}
\end{figure}
Each $\wp_\pm$ is contained in a Stokes region, so $\PP (\wp_\pm) = \pi_\ast p (\wp_\pm)$.
On the other hand, the vector bundle $\cal{E}$ is constructed by gluing $\pi_\ast \cal{L}$ to itself over $U_{\alpha}$ by the automorphism $\VV_{\alpha}$, so we obtain the following formula for $\PP (\wp)$:
\eqntag{\label{181107185940}
	\PP (\wp) = \pi_\ast p (\wp_+) \cdot \VV_{\alpha} \cdot \pi_\ast p (\wp_-)
\fullstop
}
Explicitly, let $\wp', \wp''$ denote the two lifts of $\wp$ to $\sf{\Sigma}$, where $\wp'$ intersects $\alpha_-$ and $\wp''$ intersects $\alpha_+$ (\autoref{181109141925}).
The parallel transport $\PP (\wp) : E_{\xx} \too E_{\yy}$ can be expressed as
\eqns{
	\PP (\wp)
		= 	\mtx{ p (\wp'_+) & \HIDE{0} \\ \HIDE{0} & p (\wp''_+)}
		\mtx{ 1 & \Delta_{\alpha}^+ \\ \HIDE{0} & 1}
		\mtx{ p (\wp'_-) & \HIDE{0} \\ \HIDE{0} & p (\wp''_-)}
		= \mtx{ p (\wp') & p (\wp'_+) \Delta_{\alpha}^+ p (\wp''_-) \\ \HIDE{0} & p (\wp'')}
\fullstop
}
The off-diagonal term $p (\wp'_+) \Delta_{\alpha}^+ p (\wp''_-)$ is the parallel transport of $\de$ along the concatenated path $\wp^+_\alpha \coleq \wp'_+ \delta^+_\alpha \wp''_-$ (\autoref{181109152458}), so
\eqntag{\label{181108134016}
	\PP (\wp)
		= \mtx{ p (\wp') & p ( \wp^+_\alpha ) \\ \HIDE{0} & p (\wp'')}
:
\begin{tikzcd}[ampersand replacement=\&, row sep = tiny, baseline=-2.5pt]
			L_{\xx'}
				\ar[d, "\oplus" description]
				\ar[r, shorten >=-2.5pt, shorten <=-5pt]
	\&		L_{\yy'}
				\ar[d, "\oplus" description]
\\			L_{\xx''}
				\ar[ur, shorten >=-2.5pt, shorten <=-5pt]
				\ar[r, shorten >=-2.5pt, shorten <=-5pt]
	\&		L_{\yy''}
\end{tikzcd}
\fullstop
}
\begin{figure}[t]
\centering
\includegraphics[width=\textwidth]{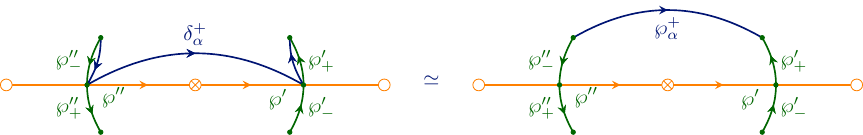}	
\caption{The concatenated path $\wp'_+ \delta^+_\alpha \wp''_-$ (left) is homotopic to $\wp^+_\alpha$ (right).}
\label{181109152458}
\end{figure}

\begin{proof}[Proof of \Autoref{181109134108}.]
Fix a branch point $\bb \in B$, and let $U_{\alpha}, U_{\beta}, U_{\gamma}$ be the three Stokes rays incident to $\bb$.
Fix a basepoint $\xx$ in the Stokes region $U_\II$ as shown in \autoref{181108130914}, and also fix a loop $\wp$ around $\bb$.
We calculate the monodromy $\PP (\wp)$.
Fix two more basepoints $\yy, \zz$ in the other two Stokes regions, thus dividing the loop $\wp$ into three short paths denoted by $\wp_\alpha, \wp_\beta, \wp_\gamma$, as explained in \autoref{181108130919}.
Then $\PP (\wp) = \PP (\wp_\gamma) \PP (\wp_\beta) \PP (\wp_\alpha)$.
Each $\PP (\wp_\bullet)$ (where $\bullet = \alpha, \beta, \gamma$) can be expressed via \eqref{181107185940} as
\eqntag{
	\PP (\wp_\bullet)
	= 	\pi_\ast p (\wp_{\bullet+}) \cdot \VV_{(\bullet)} \cdot \pi_\ast p (\wp_{\bullet-})
\fullstop
}
Now, let $\wp', \wp''$ be the two lifts of $\wp$ to $\sf{\Sigma}$, as explained in \autoref{181108131719}.
The lifts $\wp''_\alpha, \wp'_\beta, \wp''_\gamma$ intersect the positive spectral rays $\alpha_+, \beta_+, \gamma_+$, giving rise to three sheet transposition paths $\wp^+_\alpha, \wp^+_\beta, \wp^+_\gamma$ as shown in \autoref{181108140946}.
By inspection,
\eqntag{\label{181108141120}
	\wp^+_\alpha = (\wp'_\gamma \wp'_\beta)^{-1}
\fullstop{,}
\qquad
	\wp^+_\beta = (\wp'_\alpha \wp''_\gamma)^{-1}
\fullstop{,}
\qquad
	\wp^+_\gamma= (\wp''_\beta \wp''_\alpha)^{-1}
\fullstop
}
\begin{figure}[p]
\begin{minipage}[t]{0.45\linewidth}
\centering
\includegraphics{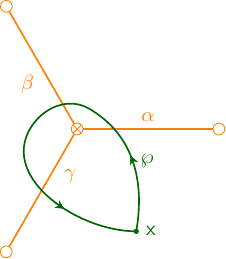}
\caption{Three Stokes rays ${\alpha}, {\beta}, {\gamma}$ on $X$ incident to the branch point $\bb \in B$, and an anti-clockwise loop $\wp$ around $\bb$ based at $\xx$.}
\label{181108130914}
\end{minipage}
\hspace{0.5cm}
\begin{minipage}[t]{0.45\linewidth}
\centering
\includegraphics{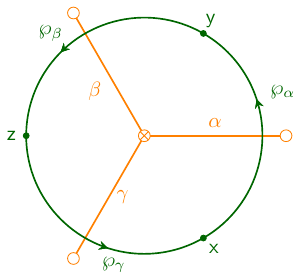}
\caption{The loop $\wp$ from \autoref{181108130914} is homotopic to the concatenated path $\wp_\gamma \wp_\beta \wp_\alpha$ as shown.}
\label{181108130919}
\end{minipage}
\end{figure}
\begin{figure}[p]
\centering
\includegraphics{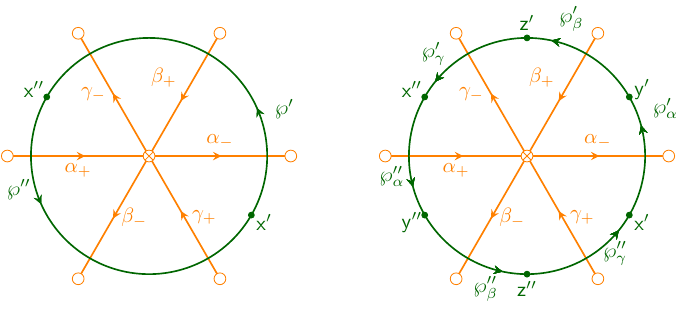}
\caption{\textit{Left:} Let $\xx', \xx''$ be the two preimages of $\xx$ on $\sf{\Sigma}$ as shown.
\textit{Right:} Let $\yy', \yy'', \zz', \zz''$ be the lifts of $\yy, \zz$ as shown, $\wp' = \wp'_\gamma \wp'_\beta \wp'_\alpha$ and $\wp'' = \wp''_\gamma \wp''_\beta \wp''_\alpha$.
}
\label{181108131719}
\end{figure}
\begin{figure}[p]
\centering
\includegraphics{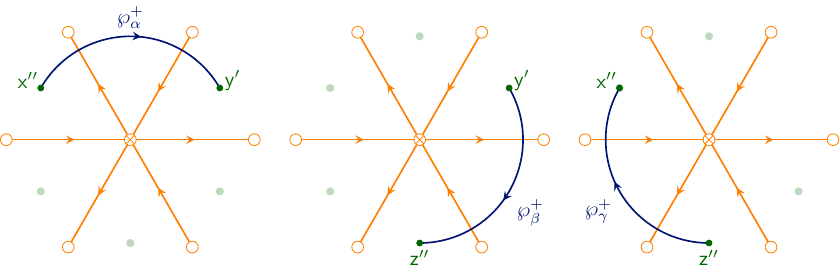}
\caption{Three sheet transposition paths $\wp^+_\alpha, \wp^+_\beta, \wp^+_\gamma$ arising from the intersections of $\wp''_\alpha, \wp'_\beta, \wp''_\gamma$ with positive spectral rays $\alpha, \beta, \gamma$, respectively.}
\label{181108140946}
\end{figure}
The explicit formula \eqref{181108134016} gives three expressions:
\eqnstag{
	\PP (\wp_\alpha)
		&= \mtx{ p (\wp'_\alpha) & p (\wp^+_{\alpha}) \\ \HIDE{0} & p (\wp''_\alpha)}
:
\begin{tikzcd}[ampersand replacement=\&, row sep = tiny, baseline=-2.5pt]
			L_{\xx'}
				\ar[d, "\oplus" description]
				\ar[r, shorten >=-2.5pt, shorten <=-5pt]
	\&		L_{\yy'}
				\ar[d, "\oplus" description]
\\			L_{\xx''}
				\ar[ur, shorten >=-2.5pt, shorten <=-5pt]
				\ar[r, shorten >=-2.5pt, shorten <=-5pt]
	\&		L_{\yy''}
\end{tikzcd}
\fullstop{,}
\\		\PP (\wp_\beta)
		&= \mtx{ p (\wp'_\beta) & \HIDE{0} \\ p (\wp^+_{\beta}) & p (\wp''_\beta)}
:
\begin{tikzcd}[ampersand replacement=\&, row sep = tiny, baseline=-2.5pt]
			L_{\yy'}
				\ar[d, "\oplus" description]
				\ar[r, shorten >=-2.5pt, shorten <=-5pt]
				\ar[dr, shorten >=-2.5pt, shorten <=-5pt]
	\&		L_{\zz'}
				\ar[d, "\oplus" description]
\\			L_{\yy''}
				\ar[r, shorten >=-2.5pt, shorten <=-5pt]
	\&		L_{\zz''}
\end{tikzcd}
\fullstop{,}
\\	\PP (\wp_\gamma)
		&= \mtx{ p (\wp'_\gamma) & p (\wp^+_{\gamma}) \\ \HIDE{0} & p (\wp''_\gamma)}
:
\begin{tikzcd}[ampersand replacement=\&, row sep = tiny, baseline=-2.5pt]
			L_{\zz'}
				\ar[d, "\oplus" description]
				\ar[r, shorten >=-2.5pt, shorten <=-5pt]
	\&		L_{\xx''}
				\ar[d, "\oplus" description]
\\			L_{\zz''}
				\ar[ur, shorten >=-2.5pt, shorten <=-5pt]
				\ar[r, shorten >=-2.5pt, shorten <=-5pt]
	\&		L_{\xx'}
\end{tikzcd}
\fullstop
}
Notice that $\PP (\wp_\beta)$ is \textit{lower}-triangular in the given decompositions of $\evat{\cal{\pi_\ast \cal{L}}}{\yy}$ and $\evat{\cal{\pi_\ast \cal{L}}}{\zz}$, because it is the lift $\wp'_\beta$ of $\wp_\beta$ starting at $\yy'$ that intersects the positive spectral ray $\beta_+$.
Also notice that the source fibre of $\PP (\wp_\alpha)$ is decomposed as $L_{\xx'} \oplus L_{\xx''}$, whilst the target fibre of $\PP (\wp_\gamma)$ is decomposed as $L_{\xx''} \oplus L_{\xx'}$, so the monodromy $\PP (\wp) \in \Aut \big(L_{\xx'} \oplus L_{\xx''} \big)$ is given by
\eqns{
	\PP (\wp)
	&= 	
		\mtx{ \HIDE{0} & 1 \\ 1 & \HIDE{0}}
		\mtx{ p (\wp'_\gamma) & p (\wp^+_{\gamma}) \\ \HIDE{0} & p (\wp''_\gamma)}
		\mtx{ p (\wp'_\beta) & \HIDE{0} \\ p (\wp^+_{\beta}) & p (\wp''_\beta)}
		\mtx{ p (\wp'_\alpha) & p (\wp^+_{\alpha}) \\ \HIDE{0} & p (\wp''_\alpha)}
\\	&=
		\mtx{ 	p (\wp''_\gamma \wp^+_{\beta} \wp'_\alpha) 
				& 	p (\wp''_\gamma \wp^+_{\beta} \wp^+_{\alpha}) + p (\wp''_\gamma \wp''_\beta \wp''_\alpha)
				\\	p( \wp'_\gamma \wp'_\beta \wp'_\alpha) + p (\wp^+_{\gamma} \wp^+_{\beta} \wp'_\alpha)
				&	p(\wp'_\gamma \wp'_\beta \wp^+_{\alpha}) + p(\wp^+_{\gamma} \wp^+_{\beta} \wp^+_{\alpha}) + p(\wp^+_{\gamma} \wp''_\beta \wp''_\alpha)}
\fullstop
}
Applying relations \eqref{181108141120}, we find that $\wp''_\gamma \wp^+_{\beta} \wp'_\alpha = \wp''_\gamma (\wp'_\alpha \wp''_\gamma)^{-1} \wp'_\alpha = 1$, which is a constant path at $\xx'$, so the top-left entry of $\PP (\wp)$ is $1$.
Next, the path $\wp^+_{\gamma} \wp^+_{\beta} \wp'_\alpha$ appearing in the bottom-left entry, simplifies to $(\wp''_\gamma \wp''_\beta \wp''_\alpha)^{-1}$, so $p(\wp^+_{\gamma} \wp^+_{\beta} \wp^+_{\alpha}) = p (\wp'_\gamma \wp'_\beta \wp'_\alpha)^{-1}$.
Now, $\wp''_\gamma \wp''_\beta \wp''_\alpha \wp'_\gamma \wp'_\beta \wp'_\alpha$ is a loop around the ramification point $\rr$ based at $\xx'$, and since the connection $\de$ has monodromy $-1$ around $\rr$ by \Autoref{181116155648}, we find:
\eqntag{
	p (\wp''_\gamma \wp''_\beta \wp''_\alpha \wp'_\gamma \wp'_\beta \wp'_\alpha) = -1
\fullstop
}
It follows that $p (\wp'_\gamma \wp'_\beta \wp'_\alpha)^{-1} = - p (\wp''_\gamma \wp''_\beta \wp''_\alpha)$, and so the bottom-left entry of $\PP (\wp)$ is $0$.
Similarly, we can calculate the other entries of $\PP (\wp)$ and find that $\PP (\wp) = \id$.
\end{proof}

\paragraph{Diagonal decompositions and transversality.}
\label{191116103716}
The fact that the connection $\nabla$ is transverse with respect to $\Gamma$ is deduced from the fact that the local and semilocal diagonal decompositions of $\cal{E}$ (\Autoref{181114180038} and \Autoref{181109180145}) can be easily recovered from our construction as follows.
Let $U_\pp$ be the infinitesimal disc around a pole $\pp \in D$.
If $U_\pp^\pm$ are respectively the infinitesimal discs around $\pp_\pm$, let $\smash{\cal{L}_\pp^\pm \coleq \evat{\cal{L}}{U_{\pp}^\pm}}$ and $\Lambda_\pp^\pm \coleq \pi_\ast \cal{L}_\pp^\pm$.
Then it follows from the construction of $\cal{E}$ over $U_\pp$ that the local diagonal decomposition of $\cal{E}_\pp$ is precisely $\evat{\pi_\ast \cal{L}}{U_{\pp}} = \Lambda_\pp^- \oplus \Lambda_\pp^+$.
As a result, the local Levelt filtration of $\cal{E}$ at $\pp$ is $\cal{E}_\pp^\bullet = \big( \Lambda_\pp^- \subset \cal{E}^\pp \big)$.

Let $U_\II$ be a Stokes region with $\II = \set{i,i'}$ and with polar vertices $\pp, \pp'$ such that the spectral regions $U_i, U_{i'}$ are respectively incident to the preimages $\pp_-, \pp'_-$.
By construction, if $\smash{\cal{L}_{i^{(\prime)}} \coleq \evat{\cal{L}}{U_{i^{(\prime)}}}}$ and $\smash{\Lambda_{i^{(\prime)}} \coleq \pi_\ast \cal{L}_{i^{(\prime)}}}$, then $\cal{E}_\II = \Lambda_i \oplus \Lambda_{i'}$.
Of course, $\cal{L}_{i}$ is the unique continuation of $\cal{L}_{\pp}^-$ from $U_{\pp}^-$ to $U_i$, and therefore $\Lambda_i$ is the unique continuation of $\Lambda_\pp^-$ from $U_\pp$ to $U_\II$.
Same for $\Lambda_{i'}$.
As a result, the direct sum $\Lambda_i \oplus \Lambda_{i'}$ is nothing but the transverse intersection $\cal{E}_{\pp,\II}^\bullet \pitchfork \cal{E}_{\pp',\II}^\bullet$ of Levelt filtrations $\cal{E}_\pp^\bullet, \cal{E}_{\pp'}^\bullet$ continued to $U_\II$.
This demonstrates the fact that $\nabla$ is transverse with respect to $\Gamma$, so $(\cal{E}, \nabla, \MM) \in \Conn_X^2 (\Gamma)$.

\begin{prop}{181109131348}
The correspondence $(\cal{L}, \de, \mu) \mapsto (\cal{E}, \nabla, \MM)$ extends to a functor
\eqntag{
	\pi_\ab^\Gamma : \Conn_\sf{\Sigma}^1 \too \Conn_X^2 (\Gamma)
\fullstop
}
\end{prop}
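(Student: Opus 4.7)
The object part of the assignment has already been constructed in the preceding discussion, and transversality of the resulting $(\cal{E}, \nabla, \MM)$ with respect to $\Gamma$ is verified in \hyperref[191116103716]{\S\ref*{181130123445}.\ref*{191116103716}}. What remains is to define $\pi_\ab^\Gamma$ on morphisms and check it is functorial. My plan is to take the only natural candidate: given a morphism $\varphi : (\cal{L}, \de, \mu) \to (\cal{L}', \de', \mu')$ in $\Conn_\sf{\Sigma}$, declare $\pi_\ab^\Gamma (\varphi)$ to be $\pi_\ast \varphi$ restricted Stokes-region by Stokes-region, and then verify that these local pieces glue, extend across the poles and branch points, and respect the $\frak{sl}_2$-structure.

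First I would handle the gluing. On any Stokes region $U_\II$, both $\cal{E}_\II = \evat{\pi_\ast \cal{L}}{U_\II}$ and $\cal{E}'_\II = \evat{\pi_\ast \cal{L}'}{U_\II}$, and $\pi_\ast \varphi_\II \coleq \evat{\pi_\ast \varphi}{U_\II}$ is a morphism of connections intertwining $\pi_\ast \mu$ with $\pi_\ast \mu'$. Over a Stokes ray $U_\alpha$ in the intersection $U_\II \cap U_\JJ$, compatibility with the Voros regluing reduces exactly to the commutative square \eqref{181103130752}, which is precisely the naturality statement for $\Voros$ established in the construction of the Voros cocycle. Thus the local maps $\pi_\ast \varphi_\II$ glue into a well-defined morphism $\pi_\ab^\Gamma (\varphi) : \evat{\cal{E}}{X^\circ} \to \evat{\cal{E}'}{X^\circ}$ of connections away from $D \cup B$.

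Next I would extend this morphism over the poles and the branch locus. Over each infinitesimal disc $U_\pp$ with $\pp \in D$, the connection $\cal{E}_\pp^\ast$ on the punctured disc was extended by observing that the associated graded of its Stokes-induced filtration is $\evat{\pi_\ast \cal{L}}{U_\pp^\ast}$ and then invoking the standard fact that a filtered flat extension exists whenever its associated graded does. The morphism $\pi_\ast \varphi$ preserves this filtration by construction, so by the same standard fact it extends uniquely and holomorphically across $\pp$ with the correct residue behaviour. Over a branch point $\bb \in B$, \Autoref{181109134108} shows that both $\nabla$ and $\nabla'$ have trivial monodromy, so both $\cal{E}$ and $\cal{E}'$ carry the Deligne canonical extension; the functoriality of Deligne's extension (once the residue data is fixed, here $0$) forces $\pi_\ab^\Gamma (\varphi)$ to extend uniquely and holomorphically across $\bb$.

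Finally I would check compatibility with the $\frak{sl}_2$-structure and then functoriality itself. Since $\varphi$ intertwines the odd structures $\mu, \mu'$, its pushforward intertwines $\pi_\ast \mu = \MM$ and $\pi_\ast \mu' = \MM'$ on each Stokes region, and the extensions over $D$ and $B$ preserve this identity by uniqueness. Functoriality (preservation of composition and identity) is then immediate from the corresponding properties of $\pi_\ast$ together with the local, gluing, and extension steps already verified. The main obstacle I anticipate is the extension step over the branch points: everything else is either a transparent consequence of $\pi_\ast$ being a functor or a direct invocation of naturality of $\Voros$, but to extend a morphism across $B$ one must be certain that the Deligne canonical extension is functorial in the appropriate sense (for a fixed residue, which here is dictated to be $0$ by the monodromy calculation of \Autoref{181109134108}); once that is in hand, the entire proof is a short check.
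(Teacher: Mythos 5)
Your proposal is correct and follows essentially the same route as the paper, whose entire proof is the observation that functoriality follows from the naturality square \eqref{181103130752} for the Voros cocycle — exactly the gluing step you identify as the crux. The additional detail you supply about extending $\pi_\ast\varphi$ across $D$ and $B$ (via the filtered-extension fact and uniqueness of the Deligne extension) is left implicit in the paper but is the standard and correct way to complete the check.
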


This follows immediately from the commutative square \eqref{181103130752}.
We call $\pi_\ab^\Gamma$ the \dfn{nonabelianisation functor}, and the image $(\cal{E}, \nabla, \MM)$ of $(\cal{L}, \de, \mu)$ under $\pi_\ab^\Gamma$ the \dfn{nonabelianisation} of $(\cal{L}, \de, \mu)$ with respect to the Stokes graph $\Gamma$.
Finally, our \hyperlink{191115100309}{Main Theorem \ref*{191115100309}} follows from the following proposition.

\begin{prop}{180508230111}
The functors $\pi^\ab_\Gamma, \pi_\ab^\Gamma$ form a pair of inverse equivalences of categories.
\end{prop}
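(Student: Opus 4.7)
The plan is to show that the two compositions $\pi_\ab^\Gamma \circ \pi^\ab_\Gamma$ and $\pi^\ab_\Gamma \circ \pi_\ab^\Gamma$ are each naturally isomorphic to the identity. For the first composition, let $(\cal{E}, \nabla, \MM) \in \Conn_X (\Gamma)$ and set $(\cal{L}, \de, \mu) = \pi^\ab_\Gamma (\cal{E})$. By \Autoref{181109154245}, there is already a canonical isomorphism $\VV \cdot \pi_\ast \cal{L} \iso \cal{E}$ with $\VV = \Voros (\cal{L})$. But by its construction in \autoref{181130123445}, $\pi_\ab^\Gamma (\cal{L})$ is precisely $\pi_\ast \cal{L}$ reglued over Stokes rays by this very cocycle $\VV$, so this is exactly an isomorphism $\pi_\ab^\Gamma \pi^\ab_\Gamma (\cal{E}) \iso \cal{E}$. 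The $\frak{sl}_2$-structures agree because both sides carry $\pi_\ast \mu$, and naturality in $\cal{E}$ is inherited from the naturality of the local identifications $\phi_\II$ of \eqref{181107131507}.

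For the second composition, fix $(\cal{L}, \de, \mu) \in \Conn_\sf{\Sigma}$, set $(\cal{E}, \nabla, \MM) \coleq \pi_\ab^\Gamma (\cal{L})$, and let $(\cal{L}', \de', \mu') \coleq \pi^\ab_\Gamma (\cal{E})$. I would exhibit a canonical natural isomorphism $\cal{L}' \iso \cal{L}$ piecewise on the Stokes cover $\frak{U}_{\vec{\Gamma}}$ of $\sf{\Sigma}^\circ$ and then extend canonically across $R$. The analysis in \ref{191116103716} has already shown that the local Levelt decomposition of $\cal{E}_\pp$ is precisely $\pi_\ast \cal{L}_\pp^- \oplus \pi_\ast \cal{L}_\pp^+$ and that the semi-local decomposition over $U_\II$ is $\pi_\ast \cal{L}_i \oplus \pi_\ast \cal{L}_{i'}$. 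Feeding these into the abelianisation definitions \eqref{190525201542} and the pullback construction on spectral regions, and using that each restricted projection $\pi_\pp^\pm$ and $\pi_i$ is a biholomorphism, one obtains canonical identifications $\evat{\cal{L}'}{U_\pp^\pm} \iso \evat{\cal{L}}{U_\pp^\pm}$ and $\evat{\cal{L}'}{U_i} \iso \evat{\cal{L}}{U_i}$.

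The remaining work is to verify that these piecewise identifications intertwine the gluing data. Over a spectral ray $U_\alpha^\pm$, this reduces to observing that the identity map on $\cal{E}$ over $U_\alpha$ is, by construction of $\pi_\ab^\Gamma$, precisely $\VV_\alpha$, whose upper-triangular form \eqref{190514172603} combined with \Autoref{181127112030} forces the definitions \eqref{190523180730} of $g_\alpha^\pm$ to coincide with the tautological identity gluing of the single line bundle $\cal{L}$ restricted to the overlap. Near the poles, the construction \eqref{190510112117} applied to the direct sum decomposition of $\pi_\ast \cal{L}$ produces matching gluing maps. Over $R$, both $\cal{L}$ and $\cal{L}'$ are Deligne canonical logarithmic extensions, with residue $-1/2$, of the same punctured-disc connection, and the specific rigidification recalled after \Autoref{181109134108} is defined on both sides via the same kernel $\cal{ker} \big( \Lambda_\II \oplus \Lambda_\JJ \oplus \Lambda_\KK \to \cal{E}_\bb \big)$, so the two extensions agree canonically. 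The odd structures match because both are obtained by pullback from the single $\frak{sl}_2$-structure $\pi_\ast \mu = \MM$, and naturality propagates through every step of the construction.

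I expect the gluing verification over spectral rays to be the main obstacle, because it requires carefully disentangling the mutually-dependent definitions of the Voros cocycle (which glues $\pi_\ast \cal{L}$ into $\cal{E}$) and the abelianisation maps $g_\alpha^\pm$ (which reconstruct $\cal{L}'$ out of the identity on $\cal{E}$). All remaining ingredients — the pole behaviour, the pushforward of $\mu$, the extension across $R$, and the functoriality — are either tautological or follow from the uniqueness parts of Deligne's construction.
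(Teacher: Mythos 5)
Your proposal is correct and follows essentially the same route as the paper: the direction $\pi_\ab^\Gamma \pi^\ab_\Gamma \Rightarrow \Id$ is exactly \Autoref{181109154245}, and the direction $\id \Rightarrow \pi^\ab_\Gamma \pi_\ab^\Gamma$ is obtained, as in the paper, by identifying $\cal{L}$ and $\cal{L}'$ on each piece of the spectral cover as unique flat continuations of the same germs and then invoking uniqueness of the extension over $R$. The only difference is that you spell out the cocycle-compatibility check over spectral rays which the paper dismisses as clear from the discussion of Levelt decompositions and transversality; this is a harmless (indeed welcome) elaboration, not a different argument.
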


\begin{proof}
Given $(\cal{E}, \nabla, \MM) \in \Conn_X^2 (\Gamma)$, let $(\cal{L}, \de, \mu) \in \Conn_\sf{\Sigma}^1$ be its image under $\pi^\ab_\Gamma$.
By construction, the Voros cocycle $\Voros$ applied to $\cal{L}$ is the cocycle $\VV$ from \eqref{181126185446}.
\Autoref{181109154245} gives a canonical isomorphism $\pi_\ab^\Gamma \pi^\ab_\Gamma \cal{E} \iso \cal{E}$, so $\pi_\ab^\Gamma \pi^\ab_\Gamma \Rightarrow \Id$.

The converse is clear from the discussion above of diagonal decompositions and transversality (\autoref{191116103716}), so we will be brief.
Given $(\cal{L}, \de, \mu) \in \Conn_\sf{\Sigma}^1$, let $(\cal{E}, \nabla, \MM) \in \Conn_X^2 (\Gamma)$ be its nonabelianisation, and suppose $\cal{L}'$ is the abelianisation of $\cal{E}$.
First, we have $\cal{L}_\pp^\pm \iso \cal{L}_\pp^{\prime \pm}$ for every $\pp \in D$.
If $U_{i} \subset \sf{\Sigma}$ is a spectral region with sink polar vertex $\pp_-$, then $\Lambda_i = \pi_\ast \cal{L}_i$ is the unique continuation of $\Lambda_\pp^-$.
Both $\cal{L}_i$ and $\cal{L}'_i$ are the unique continuations of $(\pi_\pp^-)^\ast \Lambda_\pp^-$ to $U_i$, we get $\cal{L}'_i \iso \cal{L}_i$.
Thus, $\cal{L}, \cal{L}'$ are canonically isomorphic over $\sf{\Sigma} \setminus R$, and because their extensions over $R$ are unique, this isomorphism also extends over $\sf{\Sigma}$.
So $\cal{L} \iso \cal{L}' = \pi^\ab_\Gamma \pi_\ab^\Gamma \cal{L}$, and hence $\id \Rightarrow \pi^\ab_\Gamma \pi_\ab^\Gamma$.
\end{proof}

{\footnotesize
\bibliographystyle{nikolaev}
\bibliography{/Users/Nikita/Documents/Library/References}
}

\end{document}